\title{$\cs$-algebras associated to shift spaces}
\author{Notes for the summer school\\ \emph{Symbolic dynamics and
   homeomorphisms of the Cantor set},\\ University of Copenhagen, 23
 -- 27 June 2008}
\date{Toke Meier Carlsen\footnote{Toke.Carlsen@gmail.com}}
\newtheorem{lemma}{Lemma}[section]
\newtheorem{proposition}[lemma]{Proposition}
\newtheorem{theorem}[lemma]{Theorem}
\newtheorem{fact}[lemma]{Fact}
\theoremstyle{definition}
\newtheorem{definition}[lemma]{Definition}
\newtheorem{example}[lemma]{Example}
\newtheorem{remark}[lemma]{Remark}
\newcommand{\cs}{C^*}
\newcommand{\cha}[1]{1_{#1}}
\newcommand{\gaug}{\gamma}
\newcommand{\tgaug}{\widetilde{\gamma}}
\newcommand{\inv}{^{-1}}
\newcommand{\C}{\mathbb{C}}
\newcommand{\Z}{\mathbb{Z}}
\newcommand{\N}{\mathbb{N}}
\newcommand{\No}{{\N_{0}}}
\newcommand{\T}{\mathbb{T}}
\newcommand{\Hilbert}{\mathsf{H}}
\newcommand{\hilbert}{\Hilbert}
\newcommand{\B}{\mathcal{B}}
\newcommand{\E}{E}
\newcommand{\BL}{l^\infty}
\newcommand{\Oalg}[1][\OSS]{\mathcal{O}_{#1}}
\newcommand{\Dalg}[1][\OSS]{\mathcal{D}_{#1}}
\newcommand{\Falg}[1][\OSS]{\mathcal{F}_{#1}}
\newcommand{\Aalg}[1][\OSS]{\mathcal{A}_{#1}}
\newcommand{\Xalg}{\mathcal{X}}
\newcommand{\Yalg}{\mathcal{Y}}
\newcommand{\OSS}[1][]{{\mathsf{X}_{#1}}}
\newcommand{\ETSS}{\tilde{\TSS}}
\newcommand{\osh}{\sigma}
\newcommand{\tsh}{\tau}
\newcommand{\TSS}[1][]{\Lambda_{#1}}
\newcommand{\al}{\mathfrak a}
\newcommand{\alb}{\mathfrak b}
\newcommand{\alc}{\mathfrak c}
\newcommand{\alwords}{\al^*}
\newcommand{\als}{{\al '}^*}
\newcommand{\emptyword}{\epsilon}
\newcommand{\cyl}[2]{C(#1,#2)}
\newcommand{\Past}{{\mathcal P}}
\newcommand{\lang}{\mathsf{L}}
\newcommand{\tra}{\mathcal{L}}
\newcommand{\limm}{\underset{\longrightarrow}{\lim}}
\providecommand{\bnorm}[1]{\biggl\lVert#1\biggr\rVert}
\providecommand{\norm}[1]{\lVert#1\rVert}
\providecommand{\abs}[1]{\lvert#1\rvert}
\DeclareMathOperator{\aut}{Aut}
\DeclareMathOperator{\spa}{span}
\DeclareMathOperator{\id}{Id}
\begin{document}
\maketitle

\chapter{$C^*$-algebras associated to shift spaces}

I will in these note give an introduction to $C^*$-algebras associated
to shift spaces (also called subshifts). Notice that these notes
contains an appendix about 
$C^*$-algebras, Morita equivalence and $K$-theory of $C^*$-algebras.

$C^*$-algebras associated to shift spaces was introduced by Kengo
Matsumoto in \cite{MR1454478} as a generalization of Cuntz-Krieger
algebras (cf. \cite{MR561974}), and all the major results about them
are essentially do to him. $C^*$-algebras associated to shift spaces
have been studied by Matsumoto and his collaborators in
\cite{MR1645334,MR1921090,MR2029162,MR1636715,MR1646513,MR1691469,MR1688137,MR1716953,MR1774695,MR1764930,MR1869072,MR1852456,MR1637788},
and I have together with various collaborators contributed in
\cite{MR2339368,MR2085388,MR2253009,MR1978330,MR2380472,MR2117431,MR2091486,MR2360917}.

The approach I will take in these notes, is a little bit different
from Matsumoto's original approach. One notable difference is that I
will associate $C^*$-algebras to \emph{one-sided} shift spaces,
whereas Matsumoto associate $C^*$-algebras to \emph{two-sided} shift
spaces, but there are other differences as well. See \cite[Section
7]{MR2360917} for a discussion of the relationship between the
different $C^*$-algebras that have been associated to shift spaces.

\section{$\cs$-algebras of one-sided shift spaces}
Let $\al$ be a finite set endowed with the discrete topology. We will
call this set the \emph{alphabet} and its elements \emph{letters}. Let
$\al^\No$ be the infinite
product space $\prod_{n=0}^\infty \al$
endowed with the product topology. The transformation
$\osh$ on $\al^\No$ given by $$\bigl(\osh(x)\bigr)_i=x_{i+1},\ i\in
\No,$$ is called the \emph{(one-sided) shift}. Let $\OSS$ be a shift invariant
closed subset of $\al^\No$ (by shift invariant we mean that
$\osh(\OSS)\subseteq \OSS$, not necessarily
$\osh(\OSS)=\OSS$). The topological dynamical system
$(\OSS,\osh_{|\OSS})$ is called a \emph{one-sided shift space} (or a
\emph{one-sided subshift}). 

\begin{example}
If $\al$ is an alphabet, then $\al^\No$ itself is a shift space. We
call $\al^\No$ for the full one-sided $\al$-shift.
\end{example}

We will denote $\osh_{|\OSS}$ by $\osh_{\OSS}$ or $\osh$ for simplicity,
and on occasion the alphabet $\al$ by $\al_{\OSS}$.
We denote the $n$-fold composition of $\osh$ with itself by
$\osh^n$, and we denote the preimage of a set $X$ under $\osh^n$ by
$\osh^{-n}(X)$.

A finite sequence $u=(u_1,\ldots ,u_k)$ of elements $u_i\in
\al$ is called a finite \emph{word}. The \emph{length} of $u$ is $k$
and is denoted by $|u|$. For each $k\in \N$, we let $\al^k$ be the
set of all words with length $k$, and we let
$\lang^k(\OSS)$ be the set of all words with length $k$ appearing in
some $x\in \OSS$. We let $\lang^0(\OSS)=\al^0$ denote the set
$\{\emptyword\}$ consisting of the empty word 
$\emptyword$ which has length 0. We set $\lang_l(\OSS)=\bigcup_{k=0}^l
\lang^k(\OSS)$ and $\lang(\OSS)=\bigcup_{k=0}^\infty \lang^k(\OSS)$ and likewise
$\al_l=\bigcup_{k=0}^l\al^k$ and $\alwords=\bigcup_{k=0}^\infty \al^k$.
The set $\lang(\OSS)$ is called the
\emph{language} of $\OSS$. Note
that $\lang(\OSS)\subseteq \alwords$ for every shift space.

If $u\in \al^*$ with $|u|>0$, then we will
by $u_1$ denote the first letter (the leftmost) letter of $u$, by
$u_2$ the second letter of $u$, and so on till $u_{|u|}$ which denotes the last
(the rightmost) letter of $u$. Thus $u=u_1u_2\dotsm u_{|u|}$. 

We will often denote an element $x=(x_n)_{n\in\No}$ of $\al^\No$ by
\begin{equation*}
  x_0x_1\dotsm ,
\end{equation*}
and if $u\in\al^*$, then we will by $u x$ denote the sequence
\begin{equation*}
  u_1u_2\dotsm u_{|u|}x_0x_1\dotsm .
\end{equation*}
We will also often for a sequence $x$ belonging to either $\al^\No$
or $\al^\Z$ and for integers $k< l$ belonging to the appropriate
index set denote $x_kx_{k+1}\dotsm x_{l-1}$ by $x_{[k,l[}$ and regard
it as an element of $\al^*$. Similarly, $x_{[k,\infty[}$ will denote
the element
\begin{equation*}
  x_kx_{k+1}\dotsm
\end{equation*}
of $\al^\No$.

\begin{definition}
Let $\OSS$ be a one-sided shift space.
We let $\BL(\OSS)$ be the $\cs$-algebras of bounded functions on $\OSS$.
We define two maps $\alpha:\BL(\OSS)\to\BL(\OSS)$ and $\tra:\BL(\OSS)\to\BL(\OSS)$
by for $f\in\BL(\OSS)$ and $x\in\OSS$ letting
\begin{equation*}
  \alpha(f)(x)=f(\osh(x))\text{ and } \tra(f)(x)=
  \begin{cases}
    \frac{1}{\#\osh\inv(\{x\})}\sum_{y\in\osh\inv(\{x\})}f(y)&\text{if
    }x\in\osh(\OSS),\\ 
    0&\text{if }x\notin\osh(\OSS).	
  \end{cases}
\end{equation*}
\end{definition}

\begin{definition}
Let $\OSS$ be a one-sided shift space over the alphabet $\al$.
For every pair $u,v$ of words
in $\al^*$, we let $\cyl{u}{v}$ denote the
subset 
\begin{equation*}
  \{vx\in\OSS\mid x,ux\in\OSS\}
\end{equation*}
of $\OSS$ which consists of those elements which begins with a $v$ and
which satisfies that the element obtained by replacing the beginning $v$
with $u$ also is an element of $\OSS$. 

We let $\Dalg$ be the $\cs$-subalgebra of $\BL(\OSS)$
generated by $\{\cha{\cyl{u}{v}}\mid
u,v\in\al^*\}$. 
\end{definition}

\begin{proposition} \label{prop:dalg}
Let $\OSS$ be a one-sided shift space over the alphabet $\al$. Then we have:
\begin{enumerate}
\item $C(\OSS)\subseteq\Dalg$, 
\item $\Dalg$ is the closure of 
  \begin{equation*}
    \spa\left\{\prod_{i=1}^n\cha{\cyl{u_i}{v_i}}\mid
    u_1,\dots,u_n,v_1,\dots,v_n\in\alwords\right\},
  \end{equation*}
\item $\Dalg$ is closed under $\alpha$ and $\tra$ (i.e.,
  $f\in\Dalg\implies \alpha(f),\tra(f)\in\Dalg$), 
\item if $\Xalg$ is a $\cs$-subalgebra of $\BL(\OSS)$ that is closed
  under $\alpha$ and $\tra$ and contains $C(\OSS)$, then
  $\Dalg\subseteq \Xalg$. 
\end{enumerate}
\end{proposition}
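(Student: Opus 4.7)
My plan is to handle the four items in order; the real work lies in (3) and (4), where applying $\tra$ introduces the factor $\frac{1}{\#\osh\inv(\{x\})}$ --- not itself an indicator --- which I will invert inside an appropriate commutative $\cs$-algebra via continuous functional calculus. For (1), I observe that $\cyl{v}{v}$ equals the standard clopen cylinder $C_v=\{y\in\OSS \mid y_{[0,|v|[}=v\}$ (using shift-invariance of $\OSS$); these indicators separate points, so Stone--Weierstrass yields $C(\OSS)\subseteq\Dalg$. For (2), the $\cha{\cyl{u}{v}}$ are commuting self-adjoint projections in $\BL(\OSS)$, so $p^{*}=p=p^{2}$ together with commutativity makes every $*$-polynomial in them reduce to a linear combination of finite products, and norm-closing gives the stated description (the unit being $\cha{\cyl{\emptyword}{\emptyword}}=1$).

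For (3) I would treat $\alpha$ and $\tra$ separately. The $\alpha$ case is easy: as a $*$-homomorphism it is continuous and multiplicative, and the disjoint decomposition $\osh\inv(\cyl{u}{v})=\bigsqcup_{a\in\al}\cyl{u}{av}$ gives $\alpha(\cha{\cyl{u}{v}})=\sum_{a\in\al}\cha{\cyl{u}{av}}\in\Dalg$. For $\tra$, the inequality $\|\tra(f)\|_\infty\leq\|f\|_\infty$ gives continuity, so by (2) and linearity it is enough to show $\tra(\cha{A})\in\Dalg$ whenever $A=\bigcap_i\cyl{u_i}{v_i}$ is a finite intersection of cylinders. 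A direct preimage count writes $\tra(\cha{A})=N/D$, where $D(x)=\#\osh\inv(\{x\})=\sum_{a\in\al}\cha{\cyl{a}{\emptyword}}(x)$ takes values in $\{0,1,\dots,\#\al\}$, and where $N(x)=\#(\osh\inv(\{x\})\cap A)$ unfolds letter by letter into a finite sum of products of cylinder indicators. The main obstacle is the reciprocal: since $D$ has finite spectrum inside $\Dalg$, continuous functional calculus applied to the function $0\mapsto 0$, $k\mapsto 1/k$ produces $1/D\in\Dalg$, and because $N$ vanishes wherever $D$ does, the product $N\cdot(1/D)$ really does equal $\tra(\cha{A})$ everywhere on $\OSS$.

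Finally, for (4), suppose $\Xalg\supseteq C(\OSS)$ is a $\cs$-subalgebra of $\BL(\OSS)$ closed under $\alpha$ and $\tra$. The decomposition $\cyl{u}{v}=C_v\cap\osh^{-|v|}(\cyl{u}{\emptyword})$ yields $\cha{\cyl{u}{v}}=\cha{C_v}\cdot\alpha^{|v|}(\cha{\cyl{u}{\emptyword}})$, and since $\cha{C_v}\in C(\OSS)\subseteq\Xalg$, it will suffice to produce $\cha{\cyl{u}{\emptyword}}\in\Xalg$ for every word $u$. Iterating the computation of (3), I find that $\tra^{|u|}(\cha{C_u})$ is a nonnegative element of $\Xalg$ with finite spectrum, strictly positive on $\cyl{u}{\emptyword}$ and zero off it; applying continuous functional calculus with the function sending $0$ to $0$ and every positive spectral value to $1$ extracts $\cha{\cyl{u}{\emptyword}}\in\Xalg$, completing the proof.
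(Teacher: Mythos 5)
Your proof is correct, and in the two places where the real work happens it takes a genuinely different route from the paper's. Items (1) and (2) are essentially as in the paper (the paper uses $Z(v)=\cyl{\emptyword}{v}$, which is the same set as your $\cyl{v}{v}$, and the same Stone--Weierstrass argument). In (3), the paper handles the reciprocal of $D(x)=\#\osh\inv(\{x\})$ by a purely algebraic device: it passes to $h=1-\cha{\osh(\OSS)}+D$, which is invertible in $\BL(\OSS)$, invokes Fact~\ref{fact:inv} to conclude $h\inv\in\Dalg$, and then subtracts the correction to recover $d=1/D$ on $\osh(\OSS)$; you instead apply continuous functional calculus to the finite-spectrum element $D$ with the function $0\mapsto 0$, $k\mapsto 1/k$. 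These are two implementations of the same idea and yours is sound (spectral permanence plus $g(0)=0$ keeps $g(D)$ inside $\Dalg$); the unfolding of $N$ into cylinder indicators is exactly the paper's case analysis on the first letters of the $v_i$. The substantial divergence is in (4): the paper manufactures the counting function $f_n(x)=\#\osh^{-n}(\{x\})$ from $\tra^n(1)$ and $\sum_{u}(\tra^n(\cha{Z(u)}))^2$ by the same add--invert--subtract trick and then uses $\cha{\cyl{u}{v}}=\cha{Z(v)}\alpha^{|v|}(f_{|u|}\tra^{|u|}(\cha{Z(u)}))$, which requires pinning down the exact values of the iterated transfer operator; you need only the qualitative facts that $\tra^{|u|}(\cha{Z(u)})$ is nonnegative, finitely valued, and supported exactly on $\cyl{u}{\emptyword}$, and you extract the support projection by functional calculus before applying the simpler identity $\cha{\cyl{u}{v}}=\cha{Z(v)}\alpha^{|v|}(\cha{\cyl{u}{\emptyword}})$. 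Your version buys robustness --- you never compute what $\tra^n$ does to an indicator, only where the result is nonzero --- at the price of invoking functional calculus where the paper gets by with the elementary Fact~\ref{fact:inv}. As a minor point in your favour, your formula $\alpha(\cha{\cyl{u}{v}})=\sum_{a\in\al}\cha{\cyl{u}{av}}$ (prepending $a$) is the correct one for $\alpha(f)=f\circ\osh$; the paper's displayed $\sum_{a\in\al}\cha{\cyl{u}{va}}$ appends $a$ and appears to be a typo.
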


\begin{proof}
(1): For $u\in\alwords$ we have that
$Z(u):=\cyl{\emptyword}{u}=\{ux\in\OSS\mid x\in\OSS\}$ is a clopen
subset of $\OSS$, and thus that $\cha{Z(u)}\in C(\OSS)$. Since
$\{Z(u)\mid u\in\alwords\}$ separates the points of $\OSS$, it follows
from the Stone-Weierstrass Theorem that the $\cs$-subalgebra of
$\BL(\OSS)$ generated by $\{\cha{Z(u)}\mid u\in\alwords\}$ is equal to
$C(X)$. Thus $C(X)\subseteq \Dalg$. 

(2): By definition $\Dalg$ is the smallest $C^*$-subalgebra of
$\BL(\OSS)$ which contains $\{\cha{\cyl{u}{v}}\mid
u,v\in\al^*\}$. It is not difficult to check that the closure of 
\begin{equation*}
  \spa\left\{\prod_{i=1}^n\cha{\cyl{u_i}{v_i}}\mid
  u_1,\dots,u_n,v_1,\dots,v_n\in\alwords\right\}.
\end{equation*}
satisfies this condition.

(3): Since $\alpha$ is a $*$-homomorphism, and $\tra$ is linear and
continuous, it is enough to prove that
$\alpha(\cha{\cyl{u}{v}})\in\Dalg$ for all $u,v\in\alwords$, and
that $\tra(\prod_{i=1}^n\cha{\cyl{u_i}{v_i}})\in\Dalg$ for all
$u_1,\dots,u_n,v_1,\dots,v_n\in\alwords$, so let us do that:

If $u,v\in\alwords$, then we have
\begin{equation*}
  \alpha(\cha{\cyl{u}{v}})=\sum_{a\in\al}\cha{\cyl{u}{va}}\in\Dalg.
\end{equation*}

If $A,B\subseteq\OSS$ such that $\cha{A},\cha{B}\in\Dalg$, then
$\cha{A\cup B}=\cha{A}+\cha{B}-\cha{A}\cha{B}\in\Dalg$. Thus
$\cha{\osh^n(\OSS)}=\cha{\bigcup_{u\in\al^n}\cyl{u}{\emptyword}}\in\Dalg$. It
follows that the function
$1-\cha{\osh(\OSS)}+\sum_{a\in\al}\cha{\cyl{a}{\emptyword}}$ also
belongs to $\Dalg$. Let us denote it by $h$. 
We have for $x\in\OSS$ that
\begin{equation*}
  h(x)=
  \begin{cases}
    \#\osh\inv(\{x\})&\text{if }x\in\osh(\OSS),\\
    1&\text{if }x\notin\osh(\OSS).
  \end{cases}
\end{equation*}
Thus $h$ in invertible, and it follows from Fact \ref{fact:inv} that
$h\inv\in\Dalg$. So the function $\cha{\osh(\OSS)}-1+h\inv$
belongs to $\Dalg$. Let us denote it by $d$. 
We have for $x\in\OSS$ that
\begin{equation*}
  d(x)=
  \begin{cases}
    \frac{1}{\#\osh^{-1}(\{x\})}&\text{if }x\in\osh(\OSS),\\
    0&\text{if }x\notin\osh(\OSS).
  \end{cases}
\end{equation*}
If $u_1,\dots,u_n,v_1,\dots,v_n\in\alwords$, then either
$\prod_{i=1}^n\cha{\cyl{u_i}{v_i}}=0$, all the $v_i$'s are equal to
the empty word, or all the non-empty $v_i$'s
begin with the same letter $a'$. In the first case
$\tra(\prod_{i=1}^n\cha{\cyl{u_i}{v_i}})=0$, in the second case we
have
\begin{equation*}
  \tra\left(\prod_{i=1}^n\cha{\cyl{u_i}{v_i}}\right)=
  d\left(\sum_{a\in\al}\prod_{i=1}^n\cha{\cyl{u_ia}{\emptyword}}\right) 
  \in\Dalg,
\end{equation*}
and in the third case we have
\begin{equation*}
  \tra\left(\prod_{i=1}^n\cha{\cyl{u_i}{v_i}}\right)=
  d\cha{\cyl{a'}{\emptyword}}
  \prod_{i\in I}\cha{\cyl{u_i}{(v_i)_2(v_i)_3\dots (v_i)_{|v_i|}}}
  \prod_{i\in I'}\cha{\cyl{u_ia}{\emptyword}}\in\Dalg
\end{equation*}
where $I=\{i\in\{1,2,\dots,n\}\mid v_i\ne \emptyword\}$ and 
$I'=\{i\in\{1,2,\dots,n\}\mid v_i= \emptyword\}$.

(4): Let $\Xalg$ be a $\cs$-subalgebra of $\BL(\OSS)$ that is closed
under $\alpha$ and $\tra$ and contains $C(\OSS)$. For $n\in\No$
let $g_n$ be the
function
\begin{equation*}
  1-\tra^n(1)+\sum_{u\in\al^n}\bigl(\tra^n(\cha{z(u)})\bigr)^2.
\end{equation*}
Then $g_n\in\Xalg$, and for every $x\in\OSS$ we have
\begin{equation*}
  g_n(x)=
  \begin{cases}
    \frac{1}{\#\osh^{-n}(\{x\})}&\text{if }x\in\osh^n(\OSS),\\
      1&\text{if }x\notin\osh^n(\OSS).
  \end{cases}
\end{equation*}
Thus, $g_n$ is invertible. It follows from Fact \ref{fact:inv} that
$g_n\inv$ and hence 
$f_n:=g_n\inv+\tra^n(1)-1$ belong to $\Xalg$. For every $x\in\OSS$ we have
$f_n(x)=\#\osh^{-n}(\{x\})$. Thus if $u,v\in\alwords$, then
$\cha{\cyl{u}{v}}=\cha{Z(v)}\alpha^{|v|}(f_{|u|}\tra^{|u|}(\cha{z(u)}))\in\Xalg$.
Since $\Dalg$ is generated by $\{\cha{\cyl{u}{v}}\mid
u,v\in\alwords\}$, it follows that $\Dalg\subseteq\Xalg$.
\end{proof}

\begin{remark} \label{remark:D}
Notice that  it
follows from \cite[Theorem 1]{MR0346134} that $C(\OSS)=\Dalg$ if and
only if $\OSS$ is of finite type.
\end{remark}

\begin{definition}
Let $\OSS$ be a one-sided shift space over the alphabet $\al$. For $w\in\alwords$ we let
$\lambda_w$ be the map from $\BL(\OSS)$ to $\BL(\OSS)$ given by
\begin{equation*}
\lambda_w(f)(x)=
\begin{cases}
f(wx)&\text{if }wx\in\OSS,\\
0&\text{if }wx\notin\OSS,
\end{cases}
\end{equation*}
for $f\in\BL(\OSS)$ and $x\in\OSS$. 
\end{definition}

\begin{lemma} \label{lemma:lambda}
Let $\OSS$ be a one-sided shift space over the alphabet $\al$ and let
$w\in\alwords$. Then $\lambda_w$ is a $*$-homomorphism and
$\lambda_w(\Dalg)\subseteq\Dalg$. 
\end{lemma}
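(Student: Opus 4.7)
The plan is to handle the two claims in sequence. First, that $\lambda_w$ is a $*$-homomorphism is immediate from the pointwise definition: for any $x \in \OSS$, if $wx \in \OSS$ then $\lambda_w(fg)(x) = f(wx)g(wx) = \lambda_w(f)(x)\lambda_w(g)(x)$ and $\lambda_w(\bar f)(x) = \overline{f(wx)} = \overline{\lambda_w(f)(x)}$, while if $wx \notin \OSS$ then all expressions involved vanish at $x$. Linearity is verified in the same way, and the bound $\|\lambda_w(f)\|_\infty \leq \|f\|_\infty$ shows $\lambda_w$ is continuous.

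For the inclusion $\lambda_w(\Dalg) \subseteq \Dalg$, I would invoke Proposition~\ref{prop:dalg}(2): since $\lambda_w$ is a continuous $*$-homomorphism (in particular it sends products of generators to products of their images), it suffices to prove $\lambda_w(\cha{\cyl{u}{v}}) \in \Dalg$ for every pair $u,v \in \alwords$.

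Unwinding definitions, $\lambda_w(\cha{\cyl{u}{v}})(x) = 1$ precisely when $wx \in \OSS$, $wx$ begins with $v$, and $u\cdot(wx)_{[|v|,\infty[} \in \OSS$. For $wx$ to begin with $v$, either $w$ must start with $v$ (when $|w| \geq |v|$) or $v$ must start with $w$ (when $|w| \leq |v|$); if neither holds, then $\lambda_w(\cha{\cyl{u}{v}}) = 0 \in \Dalg$. If $w = vw'$, then $(wx)_{[|v|,\infty[} = w'x$, so the function is the indicator of $\{x \in \OSS : wx \in \OSS \text{ and } uw'x \in \OSS\}$, which equals $\cha{\cyl{w}{\emptyword}}\cha{\cyl{uw'}{\emptyword}} \in \Dalg$. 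If instead $v = wv'$, then the requirement that $wx$ begin with $v$ forces $x$ to begin with $v'$, and $(wx)_{[|v|,\infty[} = x_{[|v'|,\infty[}$, from which $\lambda_w(\cha{\cyl{u}{v}}) = \cha{\cyl{w}{\emptyword}}\cha{\cyl{u}{v'}} \in \Dalg$.

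The main obstacle is really just the bookkeeping in the last step: one has to track which portion of $wx$ the length-$|v|$ prefix covers in each case, and then recognise the resulting set of admissible $x$'s as the intersection of the cylinder $\cyl{w}{\emptyword}$ (enforcing $wx \in \OSS$) with one additional cylinder encoding the tail condition on $x$.
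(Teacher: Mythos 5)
Your proof is correct and follows essentially the same route as the paper: reduce to the generators $\cha{\cyl{u}{v}}$ using that $\lambda_w$ is a $*$-homomorphism, then verify the same three-case formula $\lambda_w(\cha{\cyl{u}{v}})=\cha{\cyl{w}{\emptyword}}\cha{\cyl{uw'}{\emptyword}}$ if $w=vw'$, $\cha{\cyl{w}{\emptyword}}\cha{\cyl{u}{v'}}$ if $v=wv'$, and $0$ otherwise. The only difference is that you spell out the bookkeeping the paper leaves as ``easy to check.''
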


\begin{proof}
It is easy to check that $\lambda_w$ is a $*$-homomorphism. Since $\Dalg$ is generated by $\{\cha{\cyl{u}{v}}\mid u,v\in\alwords\}$ and $\lambda_w$ is a $*$-homomorphism, it is enough to check that $\lambda_w(\cha{\cyl{u}{v}})\in\Dalg$ for all $u,v\in\alwords$, and this follows from the fact that
\begin{equation*}
\lambda_w(\cha{\cyl{u}{v}})=
\begin{cases}
\cha{\cyl{w}{\emptyword}}\cha{\cyl{uw'}{\emptyword}}&\text{if }w=vw',\\
\cha{\cyl{w}{\emptyword}}\cha{\cyl{u}{v'}}&\text{if }wv'=v,\\
0&\text{otherwise.}
\end{cases}
\end{equation*}
\end{proof}

\begin{definition} \label{def:rep}
Let $\OSS$ be a one-sided shift space over the alphabet $\al$. By a
\emph{representation} of $\OSS$ on a  
$\cs$-algebra $\Xalg$ we mean a pair 
$(\phi,(t_u)_{u\in\alwords})$ where $\phi$ is a $*$-homomorphism from
$\Dalg$ to $\Xalg$ 
and $(t_u)_{u\in\alwords}$ is a family of elements of $\Xalg$ such that
\begin{enumerate}
  \item $t_ut_v=t_{uv}$,  \label{item:1}
  \item $\phi(\cha{{\cyl{u}{v}}})=t_vt_u^*t_ut_v^*$ \label{item:2}
  \end{enumerate}
for all $u,v\in\al^*$.

We denote by $\cs((\phi,(t_u)_{u\in\alwords}))$ the $\cs$-subalgebra of $\Xalg$
generated by $\{t_u\mid u\in\alwords\}$.
\end{definition}

Let $\OSS$ be a one-sided shift space over the alphabet $\al$ and let
$\Hilbert$ be a Hilbert space 
with an orthonormal basis $(e_x)_{x\in \OSS}$ with the same
cardinality as $\OSS$ (we can for example let $\Hilbert$ be $l^2(\OSS)$ and
$e_x=\delta_x$). 

For every $u\in\alwords$, let $T_u$ be the bounded operator on
$\Hilbert$ defined by 
\begin{equation} \label{eq:3}
  T_u(e_x)=
  \begin{cases}
    e_{ux}&\text{if }ux\in\OSS,\\
    0&\text{if }ux\notin\OSS,
  \end{cases}
\end{equation}
and let $\phi:\Dalg\to\B(\Hilbert)$ be the $*$-homomorphism defined by
\begin{equation}
\phi(f)(e_x)=f(x)e_x. \label{eq:4}
\end{equation}

It is easy to check that $(\phi,(T_u)_{u\in\alwords})$ is a representation of $\OSS$. Thus we have:

\begin{proposition} \label{prop:representation}
  Let $\OSS$ be a one-sided shift space over the alphabet $\al$ and let
  $\Hilbert$ be a Hilbert space 
  with an orthonormal basis $(e_x)_{x\in \OSS}$ with the same
  cardinality as $\OSS$. For every $u\in\alwords$, let $T_u$ be the
  bounded operator on $\Hilbert$ defined by \eqref{eq:3}, and let
  $\phi:\Dalg\to\B(\Hilbert)$ be the $*$-homomorphism defined by
  \eqref{eq:4}. Then  
  $(\phi,(T_u)_{u\in\alwords})$ is a representation of $\OSS$ on $\B(\Hilbert)$.
\end{proposition}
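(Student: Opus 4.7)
The plan is to unpack Definition \ref{def:rep} and verify each of the three required properties of the pair $(\phi,(T_u)_{u\in\alwords})$ by computing everything on the orthonormal basis $(e_x)_{x\in\OSS}$.

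First I would observe that $\phi$ is a well-defined $*$-homomorphism from $\Dalg$ to $\B(\Hilbert)$. Indeed, the formula $\phi(f)(e_x)=f(x)e_x$ makes sense for every bounded function $f$, and since the basis vectors are eigenvectors of $\phi(f)$ with eigenvalues $f(x)$, the diagonal structure makes it immediate that $\phi(fg)=\phi(f)\phi(g)$, $\phi(\bar f)=\phi(f)^*$, and $\norm{\phi(f)}\le\norm{f}_\infty$, so $\phi$ is a (contractive) $*$-homomorphism on all of $\BL(\OSS)$, in particular on $\Dalg$.

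Next I would verify property \eqref{item:1}. For $x\in\OSS$ and $u,v\in\alwords$, the equality $T_uT_v(e_x)=T_{uv}(e_x)$ reduces to noting that $uvx\in\OSS$ if and only if both $vx\in\OSS$ and $uvx\in\OSS$; the nontrivial direction uses shift-invariance of $\OSS$, since $uvx\in\OSS$ forces $vx=\osh^{|u|}(uvx)\in\OSS$.

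The main step is verifying property \eqref{item:2}. I would first compute $T_u^*$ on the basis: using $\langle T_u e_x,e_y\rangle=\delta_{y,ux}$ when $ux\in\OSS$, one gets
\begin{equation*}
T_u^*(e_y)=
\begin{cases}
e_{y_{[|u|,\infty[}}&\text{if $y$ begins with $u$,}\\
0&\text{otherwise,}
\end{cases}
\end{equation*}
noting again that $y\in\OSS$ and $y$ begins with $u$ force $y_{[|u|,\infty[}\in\OSS$. Then, for $y\in\OSS$, chasing $e_y$ through $T_v^*$, $T_u^*T_u$, and finally $T_v$, one finds that $T_vT_u^*T_uT_v^*(e_y)=e_y$ precisely when $y$ begins with $v$ and, writing $y=vy'$, one has $uy'\in\OSS$; otherwise it is $0$. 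By the definition of $\cyl{u}{v}$, this is exactly the diagonal operator with entries $\cha{\cyl{u}{v}}(y)$, i.e.\ $\phi(\cha{\cyl{u}{v}})$. The only subtle point in the whole proof is carefully tracking when the intermediate basis vectors actually lie in $\OSS$, and this is handled each time by the shift-invariance of $\OSS$.
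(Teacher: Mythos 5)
Your proof is correct and is exactly the verification the paper has in mind: the paper simply asserts that ``it is easy to check'' that $(\phi,(T_u)_{u\in\alwords})$ is a representation, and your basis-vector computations of $T_uT_v$, $T_u^*$, and $T_vT_u^*T_uT_v^*$, together with the repeated use of shift-invariance of $\OSS$ to keep the intermediate sequences inside $\OSS$, supply precisely the omitted details.
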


\begin{theorem}[{cf. \cite[Remark 7.3]{MR2380472} and \cite[Theorem
    10]{MR2360917}}] \label{theorem:univ} 
 Let $\OSS$ be a one-sided shift space over the alphabet $\al$. There
 exists a $\cs$-algebra $\Oalg$ and a representation 
 $(\iota,(s_u)_{u\in\alwords})$ of $\OSS$ on $\Oalg$ satisfying:
\begin{enumerate}
\item $C^*(\iota,(s_u)_{u\in\alwords})=\Oalg$, 
\item if $(\phi,(t_u)_{u\in\alwords})$ is a representation of $\OSS$ on a $\cs$-algebra $\Xalg$, then there exists
a $*$-homomorphism $\psi_{(\phi,(t_u)_{u\in\alwords})}:\Oalg\to\Xalg$ such that $\psi_{(\phi,(t_u)_{u\in\alwords})}\circ\iota=\phi$ and $\psi_{(\phi,(t_u)_{u\in\alwords})}(s_u)=t_u$ for every $u\in\alwords$.
\end{enumerate}
\end{theorem}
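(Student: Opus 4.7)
The plan is to construct $\Oalg$ as the universal $\cs$-algebra for the generators and relations in Definition \ref{def:rep}, following the familiar recipe: form a free $*$-algebra modulo the relations, define a $\cs$-seminorm as the supremum of norms over all Hilbert-space representations, and complete.

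The first step is to obtain a uniform bound on $t_u$ in any representation $(\phi,(t_u)_{u\in\alwords})$. Taking $u=v$ in relation (\ref{item:2}) of Definition \ref{def:rep} and noting that $\cyl{u}{u}=\{ux\in\OSS\mid ux\in\OSS\}$ is just $Z(u)$ gives
\begin{equation*}
  (t_u t_u^*)^2 \;=\; t_v t_u^* t_u t_v^* \;=\; \phi(\cha{Z(u)}),
\end{equation*}
which is a projection. Since $t_u t_u^*$ is positive and its square is a projection, $t_u t_u^*$ must itself equal that projection; consequently $t_u$ is a partial isometry and $\|t_u\|\le 1$. Together with the automatic bound $\|\phi(f)\|\le\|f\|$ for the $*$-homomorphism $\phi$, this supplies the required norm estimates on the generators.

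Next I would let $A_{0}$ denote the universal (purely algebraic) $*$-algebra generated by a copy of $\Dalg$ (with its image written $\iota(f)$) and symbols $(s_u)_{u\in\alwords}$, subject to the relations that $\iota$ is a $*$-homomorphism and that (\ref{item:1}) and (\ref{item:2}) hold. For $a\in A_{0}$ set
\begin{equation*}
  \|a\|_u \;=\; \sup\{\|\pi(a)\|\mid \pi\text{ a }*\text{-representation of }A_{0}\text{ on a Hilbert space}\}.
\end{equation*}
By the previous paragraph every generator has norm uniformly bounded (at most $1$ for $s_u$, at most $\|f\|$ for $\iota(f)$), so $\|\cdot\|_u$ is a well-defined $\cs$-seminorm on $A_{0}$; Proposition \ref{prop:representation} provides a non-zero representation, so the seminorm is non-trivial. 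To keep the supremum over a genuine set I would restrict to Hilbert spaces of cardinality bounded by, say, the cardinality of $A_{0}$ plus $\aleph_{0}$, losing no representations up to compression to cyclic subspaces. Let $\Oalg$ be the completion of $A_{0}/\{a\mid\|a\|_u=0\}$; the images of $\iota$ and the $s_u$ then form a representation of $\OSS$ on $\Oalg$, and (1) of the theorem holds by construction.

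For the universal property I would start with any representation $(\phi,(t_u)_{u\in\alwords})$ on $\Xalg$. The universal property of $A_{0}$ produces a unique $*$-homomorphism $\Psi:A_{0}\to\Xalg$ sending $\iota(f)\mapsto\phi(f)$ and $s_u\mapsto t_u$. Composing with a faithful representation of $\Xalg$ on some Hilbert space shows $\|\Psi(a)\|\le\|a\|_u$, so $\Psi$ descends to the quotient and extends continuously to $\psi_{(\phi,(t_u)_{u\in\alwords})}:\Oalg\to\Xalg$ with the required properties. The main obstacle is the partial-isometry argument in the first step: relation (\ref{item:2}) does not literally state $s_u s_u^* s_u=s_u$, and one must extract it from the special case $u=v$ using that the positive square root of a projection is itself that projection.
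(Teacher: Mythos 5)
Your construction is correct, but it is not the route the paper takes: the paper offers no proof of Theorem \ref{theorem:univ} at all, instead deferring to three external constructions (a groupoid model, a Cuntz--Pimsner algebra of a $\cs$-correspondence, and an Exel crossed product). Your generators-and-relations argument is sound: the key estimate $(t_ut_u^*)^2=\phi(\cha{\cyl{u}{u}})=\phi(\cha{Z(u)})$ is right (note $ux\in\OSS\implies x\in\OSS$ by shift invariance, so $\cyl{u}{u}=Z(u)$), the functional-calculus step showing that a positive element whose square is a projection equals that projection is valid, and contractivity of any $*$-representation on $\iota(\Dalg)$ is automatic because $\Dalg$ is already a $\cs$-algebra. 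One small point worth making explicit: condition (1) asks that $\Oalg$ be generated by $\{s_u\}$ alone (that is how $\cs((\iota,(s_u)_{u\in\alwords}))$ is defined), so you should observe that relation (\ref{item:2}) forces $\iota(\cha{\cyl{u}{v}})=s_vs_u^*s_us_v^*$ to lie in $\cs(\{s_u\mid u\in\alwords\})$, whence $\iota(\Dalg)$ does too; this is immediate but not literally ``by construction.'' As for what each approach buys: your construction is self-contained and delivers existence and the universal property in one stroke, which is all Theorem \ref{theorem:univ} claims, and combined with Proposition \ref{prop:representation} it also yields injectivity of $\iota$ exactly as the paper does in the subsequent lemma. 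The constructions the paper cites cost more machinery up front but provide concrete models that make later structural results --- notably the gauge-invariant uniqueness theorem (Theorem \ref{theorem:uniqueness}), the identification of the fixed-point algebra, and the $K$-theory computations --- far more accessible than they would be from the bare universal description.
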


The $\cs$-algebra $\Oalg$ can be constructed in different ways, for
example as the $\cs$-algebra of a groupoid (see \cite{phdthesis}), as
the $\cs$-algebra of a $\cs$-correspondence (see \cite{MR2380472}), or
as one of Ruy Exel's crossed product $\cs$-algebras of an endomorphism
and a transfer operator (see \cite{MR2360917}). 

We will through these notes let $(\iota,(s_u)_{u\in\alwords})$ denote the representation of $\OSS$ on $\Oalg$ mentioned in Theorem \ref{theorem:univ}.

\begin{remark}
We notice that since $\Oalg$ is generated by a countable family, it is
separable. 
\end{remark}

\begin{lemma}
Let $\OSS$ be a one-sided shift space over the alphabet $\al$. The $*$-homomorphism $\iota:\Dalg\to\Oalg$ is injective.
\end{lemma}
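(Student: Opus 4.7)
The plan is to deduce injectivity of $\iota$ from the universal property of $\Oalg$ combined with the concrete representation constructed in Proposition \ref{prop:representation}. The universal property gives, for every representation of $\OSS$ on some $\cs$-algebra, a $*$-homomorphism out of $\Oalg$ that intertwines the representations; so to show that $\iota$ is injective it suffices to exhibit a single representation $(\phi,(t_u)_{u\in\alwords})$ for which $\phi:\Dalg\to\Xalg$ is injective.

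The candidate is exactly the pair $(\phi,(T_u)_{u\in\alwords})$ on $\B(\Hilbert)$ built in Proposition \ref{prop:representation}. The map $\phi:\Dalg\to\B(\Hilbert)$ is the diagonal representation $\phi(f)e_x=f(x)e_x$. This is manifestly faithful on all of $\BL(\OSS)$, and in particular on the $\cs$-subalgebra $\Dalg$: if $\phi(f)=0$, then $f(x)=\langle\phi(f)e_x,e_x\rangle=0$ for every $x\in\OSS$, hence $f=0$.

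Now I would apply Theorem \ref{theorem:univ}(2) to this representation to obtain a $*$-homomorphism $\psi:=\psi_{(\phi,(T_u)_{u\in\alwords})}:\Oalg\to\B(\Hilbert)$ satisfying $\psi\circ\iota=\phi$. Since $\phi$ is injective and $\psi\circ\iota=\phi$, the map $\iota$ must itself be injective.

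There is no real obstacle here; the only thing to be careful about is checking that $(\phi,(T_u)_{u\in\alwords})$ really is a representation in the sense of Definition \ref{def:rep}, which is stated without proof just before Proposition \ref{prop:representation} and amounts to verifying $T_uT_v=T_{uv}$ and $\phi(\cha{\cyl{u}{v}})=T_vT_u^*T_uT_v^*$ on the basis vectors $e_x$; both are direct from the definitions of $T_u$ and of $\cyl{u}{v}$.
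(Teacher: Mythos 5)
Your proposal is correct and is essentially identical to the paper's own proof: both use the injectivity of the diagonal representation $\phi$ from Proposition \ref{prop:representation} together with the factorization $\psi\circ\iota=\phi$ supplied by Theorem \ref{theorem:univ} to conclude that $\iota$ is injective. The extra remarks you add (the explicit computation $f(x)=\langle\phi(f)e_x,e_x\rangle$ and the reminder to verify the representation axioms for $(\phi,(T_u)_{u\in\alwords})$) are sound but not substantively different from what the paper does.
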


\begin{proof}
Notice that the $*$-homomorphism $\phi:\Dalg\to\B(\Hilbert)$ from Proposition 
\ref{prop:representation} is injective. It follows from Theorem \ref{theorem:univ} 
that there exists a $*$-homomorphism $\psi:\Oalg\to\B(\Hilbert)$ 
such that $\psi\circ\iota=\phi$. It follows that $\iota$ is injective.
\end{proof}

We will from now on view $\Dalg$ as a subalgebra of $\Oalg$ and
suppress $\iota$. 
This allows us to state and prove the following lemma about the fundamental
structure of $\Oalg$ which we will use throughout these notes without
any reference.

\begin{lemma} \label{lemma:structure}
  Let $\OSS$ be a one-sided shift space over the alphabet $\al$. We then have:
  \begin{enumerate}
  \item $s_\emptyword=s_\emptyword^*=s_\emptyword^2=\cha{\OSS}$ is a
    unit for $\Oalg$,
  \item if $u\in\alwords$, then $s_us_u^*=\cha{\cyl{\emptyword}{u}}$,
  \item if $u\in\alwords$, then $s_u$ is a partial isometry (i.e., $s_us_u^*s_u=s_u$ and $s_u^*s_us_u^*=s_u^*$),
  \item if $u,v\in\alwords$ and $|u|=|v|$, then we have
    \begin{equation*}
      s_u^*s_v=
      \begin{cases}
        \cha{\cyl{u}{\emptyword}}&\text{if }u=v,\\
        0&\text{if }u\ne v.
      \end{cases}
    \end{equation*}
  \end{enumerate}
\end{lemma}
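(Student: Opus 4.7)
The plan is to derive (1)--(4) in order, since each part will lean on the previous ones; the engine throughout is a judicious specialisation of the two defining relations $s_u s_v = s_{uv}$ and $\cha{\cyl{u}{v}} = s_v s_u^* s_u s_v^*$ from Definition \ref{def:rep}, supplemented by standard C*-manipulations.

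For (1), I would first note that $s_\emptyword s_u = s_u = s_u s_\emptyword$ for every $u \in \alwords$ from the first defining relation; since the $s_u$ generate $\Oalg$, this makes $s_\emptyword$ a two-sided unit, hence self-adjoint, and $s_\emptyword^2 = s_{\emptyword\emptyword} = s_\emptyword$ is immediate. Applying the second defining relation with $u = v = \emptyword$, together with $\cyl{\emptyword}{\emptyword} = \OSS$, then identifies $s_\emptyword$ with $\cha{\OSS}$. Item (2) is then just a specialisation of the second defining relation to $u \mapsto \emptyword$, $v \mapsto u$, using (1) to erase the $s_\emptyword$ factors.

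For (3), the key observation is the set-theoretic equality $\cyl{u}{u} = \cyl{\emptyword}{u}$ (both equal $\{ux \in \OSS : x \in \OSS\}$); feeding this into the second defining relation and combining with (2) yields $(s_u s_u^*)^2 = s_u s_u^*$, and since $s_u s_u^*$ is also self-adjoint it is a projection. From there the standard C*-trick of expanding $(s_u - s_u s_u^* s_u)(s_u - s_u s_u^* s_u)^*$ and collapsing it to $0$ via $(s_u s_u^*)^2 = s_u s_u^*$ delivers $s_u s_u^* s_u = s_u$, and the dual identity follows by taking adjoints.

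Finally for (4), the case $u = v$ is the second defining relation with $v \mapsto \emptyword$, simplified by (1) to $s_u^* s_u = \cha{\cyl{u}{\emptyword}}$. For $u \ne v$ with $|u| = |v|$, the cylinders $\cyl{\emptyword}{u}$ and $\cyl{\emptyword}{v}$ are disjoint because their elements have distinct length-$|u|$ prefixes, so (2) forces $(s_u s_u^*)(s_v s_v^*) = 0$; multiplying on the left by $s_u^*$ and invoking $s_u^* s_u s_u^* = s_u^*$ from (3) gives $s_u^* s_v s_v^* = 0$, and then multiplying on the right by $s_v$ and using $s_v s_v^* s_v = s_v$ collapses this to $s_u^* s_v = 0$. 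The main obstacle, modest as it is, is the passage from ``$s_u s_u^*$ is a projection'' to ``$s_u$ is a partial isometry'' in (3), which is where one genuinely uses the C*-norm identity $\|aa^*\| = \|a\|^2$ rather than bare algebra.
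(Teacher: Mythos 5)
Parts (2), (3) and (4) of your argument are correct and essentially identical to the paper's proof; in (3) you even write out the expansion of $(s_u-s_us_u^*s_u)(s_u-s_us_u^*s_u)^*$ that the paper delegates to the standard fact that $s$ is a partial isometry whenever $ss^*$ is a projection. The gap is in (1), at the step ``since the $s_u$ generate $\Oalg$, this makes $s_\emptyword$ a two-sided unit, hence self-adjoint''. Knowing that $s_\emptyword s_u=s_u=s_us_\emptyword$ for every $u\in\alwords$ does \emph{not} make $s_\emptyword$ a unit for the generated $\cs$-algebra: the set $\{x\in\Oalg\mid s_\emptyword x=x=xs_\emptyword\}$ is a norm-closed subalgebra containing the generators, but it is not visibly closed under the involution --- from $s_us_\emptyword=s_u$ you only get $s_\emptyword^*s_u^*=s_u^*$, not $s_\emptyword s_u^*=s_u^*$. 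The inference genuinely fails in general: the non-self-adjoint idempotent $e=\bigl(\begin{smallmatrix}1&1\\0&0\end{smallmatrix}\bigr)\in M_2(\C)$ satisfies $e\cdot e=e$, so it acts as a two-sided unit on the generating set $\{e\}$ of $\cs(e)$, yet $ee^*\ne e^*$, so $e$ is neither a unit for $\cs(e)$ nor self-adjoint. Since the entire content of (1) is precisely the self-adjointness of $s_\emptyword$ (idempotency and the identification with $\cha{\OSS}$ are then easy), your argument begs the question at its crucial point, and the defect propagates formally to (2)--(4), which all use (1) to erase $s_\emptyword$-factors.

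The repair stays within your own toolkit and is in the spirit of the paper's computation. First apply relation \ref{item:2} of Definition \ref{def:rep} with $u=v=\emptyword$: since $\cyl{\emptyword}{\emptyword}=\OSS$, this gives $(s_\emptyword s_\emptyword^*)^2=\cha{\OSS}$, which is a projection because it is the image of a projection of $\Dalg$ under a $*$-homomorphism. As $s_\emptyword s_\emptyword^*$ is positive and its square is the projection $\cha{\OSS}$, uniqueness of positive square roots yields $s_\emptyword s_\emptyword^*=\cha{\OSS}$. Now run the same trick you use in (3): using $s_\emptyword^2=s_\emptyword$ (so that the two middle terms below each reduce to $s_\emptyword s_\emptyword^*$) and $(s_\emptyword s_\emptyword^*)^2=s_\emptyword s_\emptyword^*$, one checks
\begin{equation*}
(s_\emptyword-s_\emptyword s_\emptyword^*)(s_\emptyword-s_\emptyword s_\emptyword^*)^*
= s_\emptyword s_\emptyword^*-s_\emptyword s_\emptyword s_\emptyword^*
-s_\emptyword s_\emptyword^* s_\emptyword^*+(s_\emptyword s_\emptyword^*)^2=0,
\end{equation*}
so $s_\emptyword=s_\emptyword s_\emptyword^*=\cha{\OSS}$ is a self-adjoint projection; only now does $s_\emptyword s_u=s_u=s_us_\emptyword$ pass to adjoints, products and limits to make $s_\emptyword$ a unit for all of $\Oalg$. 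With this substitution the rest of your proof goes through unchanged.
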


\begin{proof}
  (1): Since $\emptyword\emptyword=\emptyword$, it follows from
  \ref{item:1} of Definition
  \ref{def:rep} that $s_\emptyword^2=s_\emptyword$
  from which it follows that
  $s_\emptyword s_\emptyword^*= (s_\emptyword
  s_\emptyword^*+(s_\emptyword-s_\emptyword s_\emptyword^*)) (s_\emptyword
  s_\emptyword^*+(s_\emptyword^*-s_\emptyword s_\emptyword^*))=
  s_\emptyword s_\emptyword^*+(s_\emptyword-s_\emptyword
  s_\emptyword^*)(s_\emptyword-s_\emptyword s_\emptyword^*)^*$. Thus
  $s_\emptyword=s_\emptyword s_\emptyword^*$ which shows that
  $s_\emptyword$ is self-adjoint and hence a projection. 

  Thus it follows from \ref{item:2} of Definition \ref{def:rep} that
  $s_\emptyword= s_\emptyword s_\emptyword^* s_\emptyword
  s_\emptyword^*= \cha{\cyl{\emptyword}{\emptyword}}= \cha{\OSS}$. It
  now immediately follows from \ref{item:1} of Definition
  \ref{def:rep} that $s_\emptyword$ is a unit for
  $\Oalg$. 

  (2): If $u\in\alwords$, then $s_u^*s_u=s_\emptyword s_u^*s_u
  s_\emptyword^*= \cha{\cyl{\emptyword}{u}}$, and $s_us_u^*=
  s_us_\emptyword^*s_\emptyword s_u^*=\cha{\cyl{u}{\emptyword}}$. 

  (3): Follows from (2).

  (4): Let $u,v\in\alwords$ with $|u|=|v|$. If $u\ne v$ then
  $\cyl{\emptyword}{u}\cap\cyl{\emptyword}{v}=\emptyword$ and so
  $s_u^*s_v= s_u^*s_us_u^*s_vs_v^*s_v=
  s_u^*\cha{\cyl{\emptyword}{u}}\cha{\cyl{\emptyword}{v}}s_v=0$.
\end{proof}

\begin{proposition} \label{prop:structure}
Let $\OSS$ be a one-sided shift space over the alphabet $\al$, let $n\in\No$, let $w\in\al^n$ and let $f\in\Dalg$. Then we have:
\begin{enumerate}
\item $\lambda_w(f)=s_w^*fs_w$, \label{item:3}
\item $s_w^*f=\lambda_w(f)s_w^*$, \label{item:4}
\item $\alpha^n(f)=\sum_{u\in\al^n}s_ufs_u^*$, \label{item:5}
\item $s_wf=\alpha^n(f)s_w$, \label{item:6}
\item $\sum_{u,v\in\al^n}s_us_v^*s_vs_u^*$ is equal to the function
  $x\mapsto \#\osh^{-n}(\{\osh^n(x)\})$ and is thus invertible, \label{item:7}
\item $\tra^n(f)=
  (\sum_{u\in\al^n}s_u)^*(\sum_{u,v\in\al^n}s_us_v^*s_vs_u^*)\inv f(\sum_{u\in\al^n}s_u)$. \label{item:8}
\end{enumerate}
\end{proposition}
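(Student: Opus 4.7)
My plan is to prove the six identities in order, exploiting Definition \ref{def:rep}, Lemma \ref{lemma:structure}, and Lemma \ref{lemma:lambda}, and reducing items \ref{item:4}, \ref{item:6}, \ref{item:7}, \ref{item:8} to items \ref{item:3} and \ref{item:5}. The crux is shared by \ref{item:3} and \ref{item:5}: each equates a map $\Dalg\to\Oalg$ built from the $s_u$'s with a known $*$-homomorphism. Since Proposition \ref{prop:dalg} tells us that $\Dalg$ is generated as a $\cs$-algebra by the cylinder functions $\cha{\cyl{u}{v}}=s_vs_u^*s_us_v^*$ (Definition \ref{def:rep}(\ref{item:2})), it suffices in each case to check that both sides are $*$-homomorphisms and agree on those generators.

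For item \ref{item:3} I would introduce $\mu_w(f):=s_w^*fs_w$. Multiplicativity on $\Dalg$ follows from $s_ws_w^*=\cha{\cyl{\emptyword}{w}}$ (Lemma \ref{lemma:structure}(2)), the commutativity of $\Dalg$, and $\cha{\cyl{\emptyword}{w}}s_w=s_w$; so $\mu_w$ is a $*$-homomorphism, and so is $\lambda_w$ by Lemma \ref{lemma:lambda}. I then verify agreement on $\cha{\cyl{u}{v}}$ in the three cases of Lemma \ref{lemma:lambda}. If $w=vw'$, substituting $s_w=s_vs_{w'}$ and $s_v^*s_v=\cha{\cyl{v}{\emptyword}}$ reduces $\mu_w(\cha{\cyl{u}{v}})$ to $s_{w'}^*\cha{\cyl{v}{\emptyword}}\cha{\cyl{u}{\emptyword}}s_{w'}$; the target $\cha{\cyl{w}{\emptyword}}\cha{\cyl{uw'}{\emptyword}}$, expanded by the same identities with $s_{w'}s_{w'}^*=\cha{\cyl{\emptyword}{w'}}$ absorbed via $s_{w'}^*\cha{\cyl{\emptyword}{w'}}=s_{w'}^*$, produces the same element. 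If $v=wv'$, the factorization $s_v=s_ws_{v'}$ together with $s_w^*s_w=\cha{\cyl{w}{\emptyword}}$ gives $\mu_w(\cha{\cyl{u}{v}})=\cha{\cyl{w}{\emptyword}}\cha{\cyl{u}{v'}}$ directly. Otherwise Lemma \ref{lemma:structure}(4) applied to common-length prefixes forces $s_w^*s_v=0$.

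Item \ref{item:4} is then the one-line consequence $\lambda_w(f)s_w^*=s_w^*f(s_ws_w^*)=s_w^*f\cha{\cyl{\emptyword}{w}}=s_w^*f$. Item \ref{item:5} follows the same template with $\Phi_n(f):=\sum_{u\in\al^n}s_ufs_u^*$: cross terms in $\Phi_n(f)\Phi_n(g)$ vanish by Lemma \ref{lemma:structure}(4), the diagonal collapses via $s_u^*s_u=\cha{\cyl{u}{\emptyword}}$ and $\cha{\cyl{u}{\emptyword}}s_u^*=s_u^*$, and on $\cha{\cyl{u'}{v'}}$ the identity $s_us_{v'}=s_{uv'}$ yields $\Phi_n(\cha{\cyl{u'}{v'}})=\sum_{u\in\al^n}\cha{\cyl{u'}{uv'}}$, matching $\alpha^n(\cha{\cyl{u'}{v'}})$ as computed in the proof of Proposition \ref{prop:dalg}. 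Item \ref{item:6} reduces by adjoints to $fs_w^*=s_w^*\alpha^n(f)$; by \ref{item:4} the right-hand side is $\lambda_w(\alpha^n(f))s_w^*$, and the pointwise identity $\lambda_w(\alpha^n(f))=f\cha{\cyl{w}{\emptyword}}$ combined with $\cha{\cyl{w}{\emptyword}}s_w^*=s_w^*$ finishes the calculation.

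For item \ref{item:7}, applying \ref{item:5} to $\sum_vs_v^*s_v=\sum_v\cha{\cyl{v}{\emptyword}}$ rewrites the sum as $\alpha^n\bigl(\sum_v\cha{\cyl{v}{\emptyword}}\bigr)$, which at $x$ equals $\#\{v\in\al^n:v\osh^n(x)\in\OSS\}=\#\osh^{-n}(\osh^n(x))$, a positive integer at most $|\al|^n$, so Fact \ref{fact:inv} yields an inverse in $\Dalg$. For item \ref{item:8}, writing $H$ for that sum and $g=H\inv f\in\Dalg$, item \ref{item:4} together with Lemma \ref{lemma:structure}(4) gives $s_u^*gs_v=\lambda_u(g)s_u^*s_v$, which vanishes for $u\neq v$ and equals $\lambda_u(g)\cha{\cyl{u}{\emptyword}}$ when $u=v$; summing over $u,v\in\al^n$ and evaluating at $x$ yields $\sum_{u:ux\in\OSS}f(ux)/\#\osh^{-n}(x)$, which is $\tra^n(f)(x)$ in the sense used in the proof of Proposition \ref{prop:dalg}. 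The main obstacle is the case analysis in item \ref{item:3}: once the three prefix cases are dispatched by bookkeeping with $s_{uv}=s_us_v$, $s_u^*s_u=\cha{\cyl{u}{\emptyword}}$, and the commutativity of $\Dalg$, every subsequent item is a few lines.
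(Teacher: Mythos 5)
Your proof is correct and follows essentially the same route as the paper's: the first and third identities are established by checking that both sides are $*$-homomorphisms on $\Dalg$ agreeing on the generators $\cha{\cyl{u}{v}}=s_vs_u^*s_us_v^*$, and the remaining four items are then derived from these together with Lemma \ref{lemma:structure} exactly as in the text (your adjoint reduction in the fourth item and your pointwise identification of $\sum_{w\in\al^n}\lambda_w\bigl((\sum_{u,v\in\al^n}s_us_v^*s_vs_u^*)\inv f\bigr)$ with $\tra^n(f)$ are only cosmetic variants of the paper's direct computations). Your parenthetical that $\tra^n$ is to be read as the $n$-step average, as in the proof of Proposition \ref{prop:dalg}, matches the paper's usage and if anything makes the last step more explicit than the original.
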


\begin{proof}
  \ref{item:3}: It is clear that the map $f\mapsto s_w^*fs_w$ is linear and
  $*$-preserving. If $f,g\in\Dalg$, then it follows from Lemma 
  \ref{lemma:structure} and the fact that $\Dalg$ is commutative that we have
  \begin{equation*}
    s_w^*fs_ws_w^*gs_w=s_w^*f\cha{\cyl{\emptyword}{w}}gs_w=
    s_w^*\cha{\cyl{\emptyword}{w}}fgs_w=s_w^*s_ws_w^*fgs_w=s_w^*fgs_w,
  \end{equation*}
  which shows that the map $f\mapsto s_w^*fs_w$ is also
  multiplicative and thus is a $*$-homomorphism. According to 
  Lemma \ref{lemma:lambda}, $\lambda_w$ is a $*$-homomorphism, and
  since $\Dalg$ is generated by $\{\cha{\cyl{u}{v}}\mid
  u,v\in\alwords\}$, it is therefore enough
  to check that for $u,v\in\alwords$ we have
  $\lambda_w(\cha{\cyl{u}{v}})= s_w^*\cha{\cyl{u}{v}}s_w$, so let us do that:
  
  It is easy to check that 
  \begin{equation*}
    \lambda_w(\cha{\cyl{u}{v}})=
    \begin{cases}
      \cha{\cyl{w}{\emptyword}}\cha{\cyl{uw'}{\emptyword}}&\text{if }w=vw',\\
      \cha{\cyl{w}{\emptyword}}\cha{\cyl{u}{v'}}&\text{if }wv'=v,\\
      0&\text{otherwise.}
    \end{cases}
  \end{equation*}
  It follows from Lemma \ref{lemma:structure} that if $s_w^*s_v\ne 0$,
  then either $w=vw'$ for 
  some $w'\in\alwords$, or $v=wv'$ for some $v'\in\alwords$. In the
  first case we have  
  \begin{multline*}
    s_w^*s_vs_u^*s_us_v^*s_w=
    s_{w'}^*\cha{\cyl{v}{\emptyword}} \cha{\cyl{u}{\emptyword}}
    \cha{\cyl{v}{\emptyword}}s_{w'}=
    s_{w'}^*\cha{\cyl{v}{\emptyword}} \cha{\cyl{w'}{\emptyword}}
    \cha{\cyl{u}{\emptyword}}s_{w'}\\=
    s_{w'}^*s_v^*s_vs_{w'}^*s_{w'}s_u^*s_us_{w'}=
    s_{vw'}^*s_{vw'}s_{uw'}^*s_{uw'}= 
    \cha{\cyl{w}{\emptyword}}\cha{\cyl{uw'}{\emptyword}},
  \end{multline*}
  and the second case we
  have
  \begin{equation*}
    s_w^*s_vs_u^*s_us_v^*s_w=
    s_w^*s_ws_{v'}s_u^*s_us_{v'}^*s_w^*s_w= 
    \cha{\cyl{w}{\emptyword}}\cha{\cyl{u}{v'}}.
  \end{equation*}
  Thus
  $\lambda_w(\cha{\cyl{u}{v}})=s_w^*\cha{\cyl{u}{v}}s_w$ as wanted.

  \ref{item:4}: It follows from \ref{item:3}, Lemma
  \ref{lemma:structure} and the fact that $\Dalg$ is commutative that we have
  \begin{equation*}
  s_w^*f=s_w^*s_ws_w^*f=s_w^*\cha{\cyl{\emptyword}{w}}f=s_w^*f\cha{\cyl{\emptyword}{w}}=s_w^*fs_ws_w^*=\lambda_w(f)s_w^*.
  \end{equation*}

  \ref{item:5}: The map $f\mapsto \sum_{u\in\al^n}s_ufs_u^*$ is clearly linear
  and $*$-preserving. If $f,g\in\Dalg$, then it follows from Lemma
  \ref{lemma:structure} and the fact that $\Dalg$ is commutative that we have
  \begin{equation*}
    \left(\sum_{u\in\al^n}s_ufs_u^*\right)\left(\sum_{v\in\al^n}s_vgs_v^*\right)=
    \sum_{u\in\al^n}s_uf\cha{\cyl{u}{\emptyword}}gs_u^*= 
    \sum_{u\in\al^n}s_ufg\cha{\cyl{u}{\emptyword}}s_u^*= 
    \sum_{u\in\al^n}s_ufgs_u^*,
  \end{equation*}
  which proves that the map $f\mapsto \sum_{u\in\al^n}s_ufs_u^*$ is
  multiplicative, and thus a $*$-homomorphism. Since $\alpha^n$ is
  also a $*$-homomorphism and $\Dalg$ is generated by $\{\cha{\cyl{u}{v}}\mid
  u,v\in\alwords\}$, it is therefore enough
  to check that for $u',v'\in\alwords$ we have
  $\alpha^n(\cha{\cyl{u'}{v'}})=
  \sum_{u\in\al^n}s_u\cha{\cyl{u'}{v'}}s_u^*$, and that follows from
  the fact that
  \begin{multline*}
    \sum_{u\in\al^n}s_u\cha{\cyl{u'}{v'}}s_u^*=
    \sum_{u\in\al^n}s_us_{v'}s_{u'}^*s_{u'}s_{v'}^*s_u^*\\=  
    \sum_{u\in\al^n}s_{uv'}s_{u'}^*s_{u'}s_{uv'}^*=
    \sum_{u\in\al^n}\cha{\cyl{u'}{uv'}}=
    \alpha^n(\cha{\cyl{u'}{v'}}).
  \end{multline*}

  \ref{item:6}: It follows from \ref{item:5}, Lemma
  \ref{lemma:structure} and the fact that $\Dalg$ is commutative that we have
  \begin{equation*}
  s_wf=s_ws_w^*s_wf=s_w\cha{\cyl{w}{\emptyword}}f
  =s_wf\cha{\cyl{w}{\emptyword}}=s_wfs_w^*s_w
  =\sum_{u\in\al^n}s_ufs_u^*s_w=\alpha^n(f)s_w.
  \end{equation*}

  \ref{item:7}: Follows from Lemma \ref{lemma:structure} and \ref{item:5}.

  \ref{item:8}: If $u,v\in\al^n$ and $u\ne v$, then it follows from
  Lemma \ref{lemma:structure} and the fact that $\Dalg$ is
  commutative, 
  that we have
  \begin{multline*}
     s_u^*\left(\sum_{u,v\in\al^n}s_us_v^*s_vs_u^*\right)\inv f s_v
     = s_u^*\cha{\cyl{\emptyword}{u}}
     \left(\sum_{u,v\in\al^n}s_us_v^*s_vs_u^*\right)\inv f  
     \cha{\cyl{\emptyword}{v}} s_v\\
     = s_u^*\left(\sum_{u,v\in\al^n}s_us_v^*s_vs_u^*\right)\inv f 
     \cha{\cyl{\emptyword}{u}} \cha{\cyl{\emptyword}{v}} s_v=0. 		
  \end{multline*}
  Thus it follows from \ref{item:3} and \ref{item:7} that we have
  \begin{multline*}
    \left(\sum_{u\in\al^n}s_u\right)^*
    \left(\sum_{u,v\in\al^n}s_us_v^*s_vs_u^*\right)\inv 
    f\left(\sum_{u\in\al^n}s_u\right)
    = \sum_{w\in\al^n}
    \left(s_w^*\left(\sum_{u,v\in\al^n}s_us_v^*s_vs_u^*\right)\inv fs_w\right)\\
    =\sum_{w\in\al^n}\lambda_w
    \left(\sum_{u,v\in\al^n}s_us_v^*s_vs_u^*\right)\inv f)
    = \tra^n(f).
  \end{multline*}
\end{proof}

\begin{proposition} \label{prop:oalg}
Let $\OSS$ be a one-sided shift space over the alphabet $\al$. Then
$\Oalg$ is the closure of 
\begin{equation*}
\spa\{s_ufs_v^*\mid u,v\in\alwords,\ f\in\Dalg\}.
\end{equation*}
\end{proposition}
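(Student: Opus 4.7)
Denote by $\mathcal{Y}$ the closed linear span in the statement, i.e.\ the closure of $\spa\{s_u f s_v^* \mid u,v\in\alwords,\ f\in\Dalg\}$. One inclusion, $\mathcal{Y}\subseteq\Oalg$, is immediate: every $s_u$ lies in $\Oalg$ by construction, and $\Dalg\subseteq\Oalg$ because we have identified $\iota(\Dalg)$ with $\Dalg$. For the reverse inclusion I plan to verify that $\mathcal{Y}$ is a $\cs$-subalgebra of $\Oalg$ that contains the generating set $\{s_w\mid w\in\alwords\}$. Because $\Oalg=\cs(\iota,(s_u))$ by Theorem~\ref{theorem:univ}(1), this will force $\Oalg\subseteq\mathcal{Y}$.

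Containment of the generators is trivial: taking $u=w$, $v=\emptyword$, $f=\cha{\OSS}=s_\emptyword$ gives $s_w\cha{\OSS}s_\emptyword^*=s_w$. Stability under the adjoint is also immediate, since $(s_u f s_v^*)^*=s_v f^* s_u^*$ has the same form. The real content is to show that $\mathcal{Y}$ is multiplicatively closed; by continuity and bilinearity this reduces to showing that a product
\begin{equation*}
(s_u f s_v^*)(s_{u'} g s_{v'}^*) = s_u f (s_v^* s_{u'}) g s_{v'}^*
\end{equation*}
can be rewritten as an element of $\mathcal{M}:=\spa\{s_w h s_{w'}^* \mid w,w'\in\alwords,\ h\in\Dalg\}$.

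The key step is to analyse $s_v^* s_{u'}$ using Lemma~\ref{lemma:structure}. I split into three cases. If $v$ and $u'$ are incomparable as prefixes then, after factoring $s_{u'}$ and $s_v$ into their common-length initial parts and applying Lemma~\ref{lemma:structure}(4), the product vanishes. If $u'=vu''$ for some $u''\in\alwords$, then $s_v^*s_{u'}=s_v^*s_v s_{u''}=\cha{\cyl{v}{\emptyword}}s_{u''}$, so the product becomes $s_u(f\cha{\cyl{v}{\emptyword}})s_{u''}g s_{v'}^*$; applying the adjoint of Proposition~\ref{prop:structure}\ref{item:4} (equivalently, the relation $f's_{u''}=s_{u''}\lambda_{u''}(f')$) and Lemma~\ref{lemma:lambda} yields $s_{uu''}\bigl(\lambda_{u''}(f\cha{\cyl{v}{\emptyword}})g\bigr)s_{v'}^*\in\mathcal{M}$. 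If instead $v=u'v''$ with $v''\in\alwords$, then $s_v^*s_{u'}=s_{v''}^*\cha{\cyl{u'}{\emptyword}}$, and Proposition~\ref{prop:structure}\ref{item:4} gives $s_{v''}^*(\cha{\cyl{u'}{\emptyword}}g)=\lambda_{v''}(\cha{\cyl{u'}{\emptyword}}g)s_{v''}^*$, so the product rearranges to $s_u\bigl(f\lambda_{v''}(\cha{\cyl{u'}{\emptyword}}g)\bigr)s_{v'v''}^*\in\mathcal{M}$, using $s_{v''}^*s_{v'}^*=s_{v'v''}^*$.

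The main obstacle is this case analysis on $s_v^*s_{u'}$; once it is in hand, Lemma~\ref{lemma:lambda} guarantees that the interior coefficient remains in $\Dalg$, so $\mathcal{M}$ is a $*$-subalgebra, and its closure $\mathcal{Y}$ is therefore a $\cs$-subalgebra containing every $s_w$. Combined with the trivial inclusion $\mathcal{Y}\subseteq\Oalg$, this yields the equality.
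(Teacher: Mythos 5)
Your argument is correct and follows essentially the same route as the paper: reduce to showing that $\spa\{s_ufs_v^*\}$ is a $*$-subalgebra containing the generators, then handle the product $s_ufs_v^*s_{u'}gs_{v'}^*$ by a prefix case analysis on $s_v^*s_{u'}$ (zero, $u'=vu''$, or $v=u'v''$) and push the middle coefficient through $s_{u''}$ or $s_{v''}^*$ via $\lambda$ and Lemma~\ref{lemma:lambda}. Your rewriting of the case $u'=vu''$ with the coefficient $\cha{\cyl{v}{\emptyword}}=s_v^*s_v$ is in fact the correct form of the identity that the paper states with a small typographical slip, so no changes are needed.
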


\begin{proof}
Let us by $\Xalg$ denote $\spa\{s_ufs_v^*\mid u,v\in\alwords,\ f\in\Dalg\}$. Since $\{s_u\mid u\in\alwords\}\subseteq \Xalg$, it suffices to prove that $\Xalg$ is a $*$-subalgebra of $\Oalg$. It is obvious that $\Xalg$ is closed under addition and conjugation, so it is enough to prove that if $u,v,u',v'\in\alwords$ and $f,f'\in\Dalg$, then $s_ufs_v^*s_{u'}f's_{v'}^*\in\Xalg$, so let us do that:

Let us first assume that $|v|\ge |u'|$. It follows from Lemma \ref{lemma:structure} that if $s_ufs_v^*s_{u'}f's_{v'}^*\ne 0$, then there exists a $w\in\alwords$ such that $v=u'w$, and in that case it follows from Proposition \ref{prop:structure} that we have
\begin{equation*}
s_ufs_v^*s_{u'}f's_{v'}^*=s_ufs_w^*s_{u'}^*s_{u'}f's_{v'}^*=s_ufs_w^*\cha{\cyl{u'}{\emptyword}}f's_{v'}^*=
s_uf\lambda_w(\cha{\cyl{u'}{\emptyword}}f')s_w^*s_{v'}^*\in\Xalg.
\end{equation*}

Let us then assume that $|v|\le |u'|$. It follows from Lemma \ref{lemma:structure} that if $s_ufs_v^*s_{u'}f's_{v'}^*\ne 0$, then there exists a $w\in\alwords$ such that $u'=vw$, and in that case it follows from Proposition \ref{prop:structure} that we have
\begin{equation*}
s_ufs_v^*s_{u'}f's_{v'}^*=s_ufs_v^*s_vs_wf's_{v'}^*=
s_uf\cha{\cyl{u'}{\emptyword}}s_wf's_{v'}^*= 
s_us_w\lambda_w(f\cha{\cyl{u'}{\emptyword}})f's_{v'}^*
\in\Xalg.
\end{equation*}
\end{proof}

An \emph{automorphism} of a $\cs$-algebra $\Xalg$ is a $*$-isomorphism from $\Xalg$ onto itself. We will by $\aut(\Xalg)$ denote the set of automorphisms of $\Xalg$. The set $\aut(\Xalg)$ becomes a group when equipt with composition. An \emph{action} of a group $G$ on a $\cs$-algebra $\Xalg$ is a homomorphism from $G$ to $\aut(\Xalg)$. We say that an action $\alpha:G\to \aut(\Xalg)$ of a topological group $G$ is \emph{strongly continuous} if for every convergent sequence $(g_n)_{n\in\N}$ in $G$ and every $x\in\Xalg$, the sequence $\alpha(g_n)(x)$ converges to $\alpha(\lim_{n\to\infty}g_n)(x)$.

We will by $\T$ denote the group $\{z\in\C\mid |z|=1\}$. The following lemma will be useful for checking if an action of $\T$ on a $\cs$-algebra is strongly continuous.

\begin{lemma} \label{lemma:strong}
  Let $\al$ be an alphabet. If $\Xalg$ is a $\cs$-algebra generated
  by a family $(x_u)_{u\in\alwords}$ and $\alpha:\T\to\Xalg$ is an
  action such that $\alpha(z)(x_u)=z^{|u|}x_u$ and for every $z\in\T$ and
  every $u\in\alwords$, then $\alpha$ is strongly continuous.
\end{lemma}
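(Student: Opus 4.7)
The plan is to introduce the set
\begin{equation*}
\Xalg_0=\{x\in\Xalg\mid \alpha(z_n)(x)\to\alpha(z)(x) \text{ whenever } z_n\to z \text{ in } \T\}
\end{equation*}
and show that $\Xalg_0=\Xalg$ by proving $\Xalg_0$ is a closed $*$-subalgebra of $\Xalg$ containing every generator $x_u$. Once that is established, the hypothesis that $(x_u)_{u\in\alwords}$ generates $\Xalg$ forces $\Xalg_0=\Xalg$, which is exactly the desired strong continuity.

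First I would check the easy parts. The inclusion $x_u\in\Xalg_0$ is immediate from the hypothesis $\alpha(z)(x_u)=z^{|u|}x_u$ and the continuity of $z\mapsto z^{|u|}$ on $\T$. Closure under addition, scalar multiplication, and the $*$-operation in $\Xalg_0$ follows directly from the fact that each $\alpha(z)$ is linear and $*$-preserving, together with elementary properties of limits in a Banach $*$-algebra. Closure under multiplication uses the identity
\begin{equation*}
\alpha(z_n)(xy)-\alpha(z)(xy)=\bigl(\alpha(z_n)(x)-\alpha(z)(x)\bigr)\alpha(z_n)(y)+\alpha(z)(x)\bigl(\alpha(z_n)(y)-\alpha(z)(y)\bigr),
\end{equation*}
together with the fact that each $\alpha(z_n)$ is a $*$-automorphism of a $\cs$-algebra, hence an isometry, so $\norm{\alpha(z_n)(y)}=\norm{y}$ stays bounded.

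The step that deserves the most care is showing that $\Xalg_0$ is norm-closed, and this is where isometry of the $\alpha(z)$'s does the real work. Suppose $y\in\Xalg$ is a norm-limit of a sequence $(y_k)$ in $\Xalg_0$, and let $z_n\to z$ in $\T$ and $\epsilon>0$ be given. Choose $k$ with $\norm{y-y_k}<\epsilon/3$. Since $\alpha(z_n)$ and $\alpha(z)$ are isometric,
\begin{equation*}
\norm{\alpha(z_n)(y)-\alpha(z_n)(y_k)}<\epsilon/3 \text{ and } \norm{\alpha(z)(y_k)-\alpha(z)(y)}<\epsilon/3,
\end{equation*}
and since $y_k\in\Xalg_0$ we can pick $N$ so that $\norm{\alpha(z_n)(y_k)-\alpha(z)(y_k)}<\epsilon/3$ for all $n\ge N$. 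A triangle inequality argument then gives $\norm{\alpha(z_n)(y)-\alpha(z)(y)}<\epsilon$ for $n\ge N$, so $y\in\Xalg_0$.

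I expect the only conceptual obstacle is remembering to exploit that $\cs$-algebra automorphisms are automatically isometric; without this one could not push continuity from a dense subalgebra to its closure. No further subtlety is needed because strong continuity here is stated in terms of sequential convergence in the metric group $\T$.
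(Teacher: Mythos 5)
Your proposal is correct and follows essentially the same route as the paper: the paper also defines the set of elements on which the action is sequentially continuous, notes it is a $\cs$-subalgebra containing every $x_u$, and concludes it equals $\Xalg$. You have simply filled in the details the paper dismisses as ``straightforward to check,'' in particular the norm-closedness argument via the automatic isometry of $\cs$-automorphisms, which is indeed the point that deserves the care you gave it.
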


\begin{proof}
  Let $X$ be the set of elements $x$ of $\Xalg$ which satisfies that if
  $(z_n)_{n\in\N}$ converges to $z$ in $\T$, then $\alpha(z_n)(x)$
  converges to $\alpha_z(x)$ in $\Xalg$. It is straight forward to
  check that $X$ is a $\cs$-subalgebra of $\Xalg$, and since we for
  every $u\in\alwords$ have $x_u\in X$, it follows that $X=\Xalg$.
\end{proof}

\begin{proposition} \label{prop:gauge}
Let $\OSS$ be a one-sided shift space over the alphabet $\al$. Then
there exists a strongly continuous action $z\mapsto \gamma_z$ of $\T$
on $\Oalg$ such that $\gamma_z(s_u)=z^{|u|}s_u$ and $\gamma_z(f)=f$
for every $z\in\T$, $u\in\alwords$ and $f\in\Dalg$.
\end{proposition}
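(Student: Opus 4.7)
The plan is to obtain each $\gaug_z$ from the universal property of $\Oalg$ in Theorem \ref{theorem:univ}, then check the group law and strong continuity separately.

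First, fix $z\in\T$ and consider the pair $(\iota,(z^{|u|}s_u)_{u\in\alwords})$ taking values in $\Oalg$ itself. I claim this is a representation of $\OSS$ on $\Oalg$. Condition \ref{item:1} of Definition \ref{def:rep} holds because $(z^{|u|}s_u)(z^{|v|}s_v)=z^{|u|+|v|}s_{uv}=z^{|uv|}s_{uv}$, and condition \ref{item:2} holds because the scalars cancel:
\begin{equation*}
(z^{|v|}s_v)(z^{|u|}s_u)^*(z^{|u|}s_u)(z^{|v|}s_v)^*
= z^{|v|-|u|+|u|-|v|}\,s_vs_u^*s_us_v^*=\iota(\cha{\cyl{u}{v}}).
\end{equation*}
Hence by Theorem \ref{theorem:univ} there is a $*$-homomorphism $\gaug_z:\Oalg\to\Oalg$ with $\gaug_z\circ\iota=\iota$ and $\gaug_z(s_u)=z^{|u|}s_u$ for every $u\in\alwords$. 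In particular $\gaug_z$ fixes $\Dalg$ pointwise.

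Next I would verify that $z\mapsto \gaug_z$ is a group homomorphism. For $z,w\in\T$ both $\gaug_{zw}$ and $\gaug_z\circ\gaug_w$ are $*$-endomorphisms of $\Oalg$ that restrict to the identity on $\Dalg$ and send $s_u$ to $(zw)^{|u|}s_u$; since $\Oalg$ is generated by $\{s_u\mid u\in\alwords\}$ (via $\iota$ and the defining relation $s_vs_u^*s_us_v^*=\cha{\cyl{u}{v}}$), these two $*$-homomorphisms agree. Taking $z=w\inv$ gives $\gaug_z\circ\gaug_{z\inv}=\gaug_1=\id_{\Oalg}$, so each $\gaug_z$ is an automorphism and $\gaug$ is an action of $\T$ on $\Oalg$.

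Finally, strong continuity is immediate from Lemma \ref{lemma:strong}: the family $(s_u)_{u\in\alwords}$ generates $\Oalg$ as a $\cs$-algebra and satisfies $\gaug_z(s_u)=z^{|u|}s_u$. There is no real obstacle in the argument; the only point requiring even mild care is the verification that $(\iota,(z^{|u|}s_u))$ satisfies relation \ref{item:2} of Definition \ref{def:rep}, and this is a direct cancellation of phases.
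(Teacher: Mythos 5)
Your proposal is correct and follows the same route as the paper: obtain each $\gaug_z$ from the universal property applied to the representation $(\iota,(z^{|u|}s_u)_{u\in\alwords})$, verify the group law on the generators $s_u$, and deduce strong continuity from Lemma \ref{lemma:strong}. The only difference is that you spell out the verification of conditions \ref{item:1} and \ref{item:2} of Definition \ref{def:rep}, which the paper leaves as ``easy to check.''
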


\begin{proof}
Let $z\in\T$. It is easy to check that $(\iota,(z^{|u|}s_u)_{u\in\alwords})$ is a representation of $\OSS$ on $\Oalg$.
Thus there exists a $*$-homomorphism $\gamma_z\Oalg\to\Oalg$ such that
$\gamma_z(s_u)=z^{|u|}s_u$ and $\gamma_z(f)=f$ for every
$u\in\alwords$ and every $f\in\Dalg$. 

If $z_1,z_2\in\T$ and $u\in\alwords$, then we have 
\begin{equation*}
\gamma_{z_1}\bigl(\gamma_{z_2}(s_u)\bigr)=\gamma_{z_1}(z_2^{|u|}s_u)=
z_1^{|u|}z_2^{|u|}s_u=(z_1z_2)^{|u|}s_u=\gamma_{z_1z_2}(s_u).
\end{equation*}
Since $\Oalg$ is generated by $\{s_u\mid u\in\alwords\}$, it follows
that $\gamma_{z_1}\circ\gamma_{z_2}=\gamma_{z_1z_2}$. We have in
particular that
$\gamma_{z}\circ\gamma_{z\inv}=\gamma_{z\inv}\circ\gamma_z=\gamma_{1}=\id_{\Oalg}$
for every $z\in \T$, so $\gamma_z\in\aut(\Oalg)$, and
$z\mapsto\gamma_z$ is an action of $\T$ on $\Oalg$. That this action
is strongly continuous follows from Lemma \ref{lemma:strong}. 
\end{proof}

The action of $\T$ on $\Oalg$ from Proposition \ref{prop:gauge} is
called \emph{the gauge action} of $\Oalg$. 
Since $\gaug$ is strongly continuous, it follows that we for every
$x\in\Oalg$ have that the function $z\mapsto \gaug_z(x)$ is a
continuous function from $\T$ to $\Oalg$. Thus we can make sense out
of the integral
\begin{equation*}
  \int_\T\gaug_z(x)dz
\end{equation*}
(cf. \cite[Lemma C.3.]{MR1634408}).

\begin{proposition} \label{prop:cond}
Let $\OSS$ be a one-sided shift space over the alphabet $\al$. If we
for every $x\in\Oalg$ let 
\begin{equation*}
  E(x)=\int_\T\gaug_z(x)dz,
\end{equation*}
then $E$ is a linear non-expensive contraction (i.e., $||E(x)||\le
||x||$ for all $x\in\Oalg$) from $\Oalg$ to itself such that
\begin{equation*}
E(s_ufs_v^*)=
\begin{cases}
  s_ufs_v^*&\text{if }|u|=|v|,\\
  0&\text{if }|u|\ne |v|
\end{cases}
\end{equation*}
for $u,v\in\alwords$ and $f\in\Dalg$.
\end{proposition}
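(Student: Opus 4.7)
The plan is to verify each property directly from the definition $E(x) = \int_\T \gamma_z(x)\, dz$, exploiting that $\gamma$ is a strongly continuous action by $*$-automorphisms.

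First I would establish linearity and the norm estimate. Linearity of $E$ is immediate from linearity of the Bochner-type integral against normalised Haar measure on $\T$. For the norm bound, I would use that each $\gamma_z$ is a $*$-automorphism of a $\cs$-algebra and hence an isometry, so $\|\gamma_z(x)\| = \|x\|$ for every $z\in\T$; then the standard estimate for the integral of a continuous Banach-space valued function yields
\begin{equation*}
  \norm{E(x)} = \bnorm{\int_\T \gaug_z(x)\,dz} \le \int_\T \norm{\gaug_z(x)}\,dz = \norm{x},
\end{equation*}
since Haar measure on $\T$ has total mass $1$. (Here I am invoking the properties of the integral from \cite[Lemma C.3.]{MR1634408}, as the author does just before the statement.)

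Next I would compute $\gamma_z$ explicitly on an element of the form $s_u f s_v^*$. Since $\gamma_z$ is a $*$-homomorphism, using Proposition \ref{prop:gauge},
\begin{equation*}
  \gaug_z(s_u f s_v^*) = \gaug_z(s_u)\gaug_z(f)\gaug_z(s_v)^* = z^{|u|} s_u f \,\overline{z^{|v|}} s_v^* = z^{|u|-|v|}\, s_u f s_v^*.
\end{equation*}
The scalar $z^{|u|-|v|}$ can now be pulled out of the integral (since the integrand is the product of a continuous scalar function and a fixed element of $\Oalg$), giving
\begin{equation*}
  E(s_u f s_v^*) = \left(\int_\T z^{|u|-|v|}\,dz\right) s_u f s_v^*.
\end{equation*}

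Finally I would evaluate the scalar integral: it equals $1$ when $|u|=|v|$ and $0$ otherwise, since these are the Fourier coefficients of the characters of $\T$ with respect to normalised Haar measure. Plugging this into the previous display yields the two cases in the statement. None of the steps should present a genuine obstacle; the only point requiring care is the justification that the scalar $z^{|u|-|v|}$ may be factored out of the integral, which follows from the elementary properties of $\Oalg$-valued integration recalled in \cite[Lemma C.3.]{MR1634408}.
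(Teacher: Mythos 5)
Your proposal is correct and follows essentially the same route as the paper: the norm estimate via $\norm{\int_\T \gaug_z(x)\,dz}\le\int_\T\norm{\gaug_z(x)}\,dz$ together with the isometry of each automorphism $\gaug_z$, and the computation $\gaug_z(s_ufs_v^*)=z^{|u|-|v|}s_ufs_v^*$ followed by the vanishing (or not) of $\int_\T z^n\,dz$. The only cosmetic difference is that you factor the scalar out of the integral in one unified step, whereas the paper treats the cases $|u|=|v|$ and $|u|\ne|v|$ separately; the content is identical.
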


\begin{proof}
It is clear that $E$ is linear. If $\phi:\T\to\Oalg$ is continuous,
then $\norm{\int_\T f(z)dz}\le \int_\T \norm{f(z)}dz$ (see \cite[Lemma
C.3.]{MR1634408}), and if 
$x\in\Oalg$, then 
$\norm{\gaug_z(x)}=\norm{x}$ for every $z\in\T$ since $\gaug_z$ is an
automorphism, so we have 
\begin{equation*}
  \norm{E(x)}=\bnorm{\int_\T\gaug_z(x)dz}\le \int_\T\norm{\gaug_z(x)}dz=
  \int_\T\norm{x}dz=\norm{x}.
\end{equation*}
Let $u,v\in\alwords$ and $f\in\Dalg$. If $|u|=|v|$, then
$\gaug_z(s_ufs_v^*)=s_ufs_v^*$ for every $z\in\T$, so
$E(s_ufs_v^*)=s_ufs_v^*$. If $|u|\ne |v|$, then we have for every
$z\in\T$ that $\gaug_z(s_ufs_v^*)=z^ns_ufs_v^*$ where $n=|u|-|v|\ne
0$, and since $\int_{\T}z^ndz=0$, it follows that $E(s_ufs_v^*)=0$.
\end{proof}

The map $E$ from Proposition \ref{prop:cond} is a so called 
\emph{faithful conditional expectation}. 

\begin{definition}
Let $\OSS$ be a one-sided shift space. We let $\Falg$ denote the
\emph{fix-point algebra} 
\begin{equation*}
\{x\in\Oalg\mid \forall z\in\T:\gamma_z(x)=x\}
\end{equation*}
of the gauge action $\gamma$ of $\Oalg$.
\end{definition}

Notice that $\Falg$ is a $\cs$-subalgebra of $\Oalg$.

\begin{proposition}
Let $\OSS$ be a one-sided shift space over the alphabet $\al$. Then we
have that $\Falg$ is equal to the closure of 
\begin{equation*}
\spa\{s_ufs_v^*\mid u,v\in\alwords,\ |u|=|v|,\ f\in\Dalg\},
\end{equation*}
and that $E(\Oalg)=\Falg$.
\end{proposition}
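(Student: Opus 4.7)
The plan is to prove both equalities by sandwiching: let $\Yalg$ denote the closure of $\spa\{s_ufs_v^*\mid u,v\in\alwords,\ |u|=|v|,\ f\in\Dalg\}$, and show $\Yalg\subseteq\Falg$, $\Falg\subseteq E(\Oalg)$, and $E(\Oalg)\subseteq\Yalg$.

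First I would verify the easiest inclusion $\Yalg\subseteq\Falg$. For any $u,v\in\alwords$ with $|u|=|v|$ and $f\in\Dalg$, we have $\gaug_z(s_ufs_v^*)=z^{|u|}s_u f z^{-|v|}s_v^*=s_ufs_v^*$ since $\gaug_z$ fixes $\Dalg$ and $|u|=|v|$. So the generating set of $\Yalg$ lies in $\Falg$. Since $\Falg$ is a closed $\cs$-subalgebra of $\Oalg$, the linear span and its closure also lie in $\Falg$.

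Next I would establish $\Falg\subseteq E(\Oalg)$. If $x\in\Falg$, then $\gaug_z(x)=x$ for every $z\in\T$, and hence
\begin{equation*}
E(x)=\int_\T\gaug_z(x)\,dz=\int_\T x\,dz=x,
\end{equation*}
which shows $x=E(x)\in E(\Oalg)$.

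The main work is the inclusion $E(\Oalg)\subseteq\Yalg$, and this is where I would rely on Proposition \ref{prop:oalg} together with continuity of $E$. By Proposition \ref{prop:cond}, $E$ is a linear contraction, hence continuous, and Proposition \ref{prop:oalg} tells us that the $*$-subalgebra $\Xalg_0:=\spa\{s_ufs_v^*\mid u,v\in\alwords,\ f\in\Dalg\}$ is dense in $\Oalg$. By the explicit formula in Proposition \ref{prop:cond}, $E$ maps each generator $s_ufs_v^*$ either to itself (when $|u|=|v|$) or to $0$, and therefore $E(\Xalg_0)\subseteq\Yalg$. Given any $x\in\Oalg$, choose a sequence $x_n\in\Xalg_0$ with $x_n\to x$; by continuity of $E$ we have $E(x_n)\to E(x)$, and since each $E(x_n)\in\Yalg$ and $\Yalg$ is closed, we conclude $E(x)\in\Yalg$. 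Combining the three inclusions yields $\Yalg\subseteq\Falg\subseteq E(\Oalg)\subseteq\Yalg$, which forces all three sets to coincide, giving both conclusions at once. The only subtle point is justifying that $E$ is continuous and that it acts on the dense span exactly by the formula of Proposition \ref{prop:cond}, both of which are already in hand.
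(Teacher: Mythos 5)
Your proposal is correct and follows essentially the same sandwich argument as the paper: the paper's proof also notes $\Xalg\subseteq\Falg$, that $x=E(x)$ for $x\in\Falg$, and then invokes Propositions \ref{prop:oalg} and \ref{prop:cond} to identify $E(\Oalg)$ with the closed span, closing the chain of inclusions. You have merely spelled out the density-plus-continuity step that the paper leaves implicit.
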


\begin{proof}
Let $\Xalg$ denote the closure of $\spa\{s_ufs_v^*\mid u,v\in\alwords,\
|u|=|v|,\ f\in\Dalg\}$. It is clear that $\Xalg\subseteq \Falg$, and
that $x=E(x)\in\ E(\Oalg)$ for every $x\in\Falg$. It follows from
Proposition \ref{prop:oalg} 
and \ref{prop:cond} that $E(\Oalg)=\Xalg$.
Thus we have $E(\Oalg)=\Xalg\subseteq\Falg\subseteq E(\Oalg)$ from
which the conclusion follows.
\end{proof}

The following theorem is an important and useful tool when one works
with $\Oalg$. 
I will not give a proof for it here.

\begin{theorem} \label{theorem:uniqueness}
  Let $\OSS$ be a one-sided shift space, $\Xalg$ a $\cs$-algebra
  and $\phi:\Oalg\to \Xalg$ a surjective $*$-homomorphism. Then the
  following two statements are 
  equivalent:
  \begin{enumerate}
  \item \label{item:u1} The $*$-homomorphism $\phi:\Oalg\to \Xalg$ is a
    $*$-isomorphism.
  \item \label{item:u2} The restriction of $\phi$ to $\Dalg$ is
    injective and there exists an action $\tgaug:\T\to\aut(\Xalg)$ such that
    $\tgaug_z(\phi(s_u))=z^{|u|}\phi(s_u)$
    for every $z\in\T$ and every $u\in\al^*$.
  \end{enumerate}
\end{theorem}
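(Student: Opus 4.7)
The direction (\ref{item:u1})$\Rightarrow$(\ref{item:u2}) is immediate: taking $\tgaug_z:=\phi\circ\gaug_z\circ\phi^{-1}$ produces the required action, and $\phi|_{\Dalg}$ is injective because $\phi$ is. For the converse, the plan is the standard gauge-invariant uniqueness strategy, in three steps: transport $E$ to $\Xalg$ via $\tgaug$, reduce injectivity on $\Oalg$ to injectivity on the fix-point algebra $\Falg$ using faithfulness of $E$, and then prove the latter by a matrix-unit argument at each level of a natural filtration.

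For the transport step, I would first apply Lemma \ref{lemma:strong} to $\tgaug$ (with the generating family $\{\phi(s_u)\}_{u\in\alwords}$) to conclude strong continuity, and then set $\tilde E(y):=\int_{\T}\tgaug_z(y)\,dz$; the argument of Proposition \ref{prop:cond} shows this is a linear contraction. Using the relation $\phi(\cha{\cyl{u}{v}})=\phi(s_v)\phi(s_u)^*\phi(s_u)\phi(s_v)^*$ from Definition \ref{def:rep}, one checks $\tgaug_z\circ\phi=\phi\circ\gaug_z$ on the generators of $\Oalg$ and hence everywhere, so integrating yields $\tilde E\circ\phi=\phi\circ E$. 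Granting injectivity on $\Falg$ for a moment, any $x\in\Oalg$ with $\phi(x)=0$ satisfies $\phi(E(x^*x))=\tilde E(\phi(x^*x))=0$, and since $E(x^*x)\in\Falg$ this forces $E(x^*x)=0$; faithfulness of $E$ (declared just after Proposition \ref{prop:cond}) then gives $x=0$.

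To handle $\Falg$, let $\Falg^n$ be the closure of $\spa\{s_ufs_v^*\mid u,v\in\al^n,\,f\in\Dalg\}$. Because every element of $\OSS$ begins with a unique letter, $\sum_{a\in\al}s_as_a^*=\sum_{a\in\al}\cha{\cyl{\emptyword}{a}}=\cha{\OSS}$, and combining this with Proposition \ref{prop:structure} gives $s_ufs_v^*=\sum_{a\in\al}s_{ua}\lambda_a(f)s_{va}^*$; hence $\Falg^n\subseteq\Falg^{n+1}$ and $\Falg=\overline{\bigcup_n\Falg^n}$, so it suffices to prove injectivity on each $\Falg^n$. Consider the bounded linear maps $P_{w,w'}(y):=s_w^*ys_{w'}$ for $w,w'\in\al^n$. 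By Lemma \ref{lemma:structure}(4), $P_{w,w'}(s_ufs_v^*)=\delta_{uw}\delta_{vw'}\cha{\cyl{u}{\emptyword}}f\cha{\cyl{v}{\emptyword}}\in\Dalg$; by continuity and closedness of $\Dalg$, $P_{w,w'}(\Falg^n)\subseteq\Dalg$. The recovery formula $y=\sum_{w,w'\in\al^n}s_wP_{w,w'}(y)s_{w'}^*$ holds on the span (using $s_us_u^*s_u=s_u$ to absorb the support projections) and extends to all of $\Falg^n$ by continuity. Consequently $\phi(y)=0$ forces $\phi(P_{w,w'}(y))=\phi(s_w)^*\phi(y)\phi(s_{w'})=0$ for every $w,w'$, and injectivity of $\phi|_{\Dalg}$ gives $P_{w,w'}(y)=0$, whence $y=0$.

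The main obstacle is the last matrix-unit step: one must verify that $P_{w,w'}$ lands in $\Dalg$ on the whole closure $\Falg^n$ and that the reconstruction $y=\sum s_wP_{w,w'}(y)s_{w'}^*$ persists after taking the closure. Both points come down to the boundedness of the compressions $y\mapsto s_w^*ys_{w'}$ and the closedness of $\Dalg\subseteq\Oalg$. Once the filtration and this matrix-unit reconstruction are in place, the first two stages are essentially bookkeeping using Lemma \ref{lemma:strong}, Proposition \ref{prop:cond}, and faithfulness of $E$.
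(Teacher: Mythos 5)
The paper deliberately omits a proof of this theorem (``I will not give a proof for it here''), so there is nothing internal to compare your argument against; what you have written is the standard gauge-invariant uniqueness argument, and it is essentially correct and complete. The forward direction, the transport of the conditional expectation via Lemma \ref{lemma:strong} and the intertwining $\tgaug_z\circ\phi=\phi\circ\gaug_z$ (checked on the generators $s_u$), the reduction to $\Falg$ via faithfulness of $E$, the identity $\sum_{a\in\al}s_as_a^*=1$ giving $s_ufs_v^*=\sum_{a\in\al}s_{ua}\lambda_a(f)s_{va}^*$ and hence the filtration $\Falg^n\subseteq\Falg^{n+1}$ with dense union, and the compression/reconstruction argument with $P_{w,w'}$ all check out against Lemmas \ref{lemma:lambda} and \ref{lemma:structure} and Propositions \ref{prop:structure}, \ref{prop:cond}. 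The one step you should make explicit is the passage from ``$\phi$ is injective on each $\Falg^n$'' to ``$\phi$ is injective on $\Falg$'': injectivity on a dense subspace does not by itself pass to the closure, so you need to observe that each $\Falg^n$ is a $\cs$-subalgebra (it is closed under products by Lemma \ref{lemma:structure}(4) and under adjoints), whence an injective $*$-homomorphism restricted to it is isometric; isometry on the dense union $\bigcup_n\Falg^n$ then extends by continuity to $\Falg$. With that sentence added, the proof is complete modulo the faithfulness of $E$, which the paper asserts without proof immediately after Proposition \ref{prop:cond} and which you are therefore entitled to quote.
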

\section{One-sided conjugation}

\begin{definition}
Let $\OSS[1]$ and $\OSS[2]$ be one-sided shift spaces. We say that
$\OSS[1]$ and $\OSS[2]$ are \emph{conjugate} if there exists a
homeomorphism $\phi:\OSS[1]\to\OSS[2]$ such that
$\phi\circ\osh_{\OSS[1]}=\osh_{\OSS[2]}\circ\phi$. We call such a
homeomorphism for a \emph{conjugacy}.
\end{definition}

\begin{definition}
Let $\OSS$ be a one-sided shift space over the alphabet $\al$.
We will by $\lambda_{\OSS}$ denote the map
\begin{equation*}
  x\mapsto \left(\sum_{a\in\al}s_a^*\right)x\left(\sum_{b\in\al}s_b\right)
\end{equation*}
from $\Falg[\OSS]$ to $\Falg[\OSS]$.
\end{definition}

\begin{theorem} \label{theorem:osc}
Let $\OSS[1]$ and $\OSS[2]$ be one-sided shift spaces. If $\OSS[1]$
and $\OSS[2]$ are conjugate, then there exists a $*$-isomorphism $\psi$
from $\Oalg[{\OSS[1]}]$ to $\Oalg[{\OSS[2]}]$ such that
\begin{enumerate}
\item $\psi(C(\OSS[1]))=C(\OSS[2])$,
\item $\psi(\Dalg[{\OSS[1]}])=\Dalg[{\OSS[2]}]$,
\item $\psi(\Falg[{\OSS[1]}])=\Falg[{\OSS[2]}]$,
\item $\psi\circ\alpha=\alpha\circ\psi$,
\item $\psi\circ\tra=\tra\circ\psi$,
\item $\psi\circ\gaug_z=\gaug_z\circ\psi$ for every $z\in\T$,
\item $\psi\circ\lambda_{\OSS[1]}=\lambda_{\OSS[2]}\circ\psi$
\end{enumerate}
\end{theorem}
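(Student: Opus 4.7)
The plan is to use the universal property of $\Oalg[{\OSS[1]}]$ from Theorem~\ref{theorem:univ}: I will build a representation of $\OSS[1]$ inside $\Oalg[{\OSS[2]}]$, apply the universal property to obtain a $*$-homomorphism $\psi$, and then check that it is a $*$-isomorphism satisfying all seven intertwining properties. Let $\al_i$ denote the alphabet of $\OSS[i]$.

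The homeomorphism $\phi$ induces a $*$-isomorphism $\Phi:\BL(\OSS[1])\to\BL(\OSS[2])$ by $\Phi(f)=f\circ\phi\inv$, which carries $C(\OSS[1])$ onto $C(\OSS[2])$ and intertwines $\alpha$ and $\tra$ (for $\tra$, one uses that a conjugacy induces a bijection between the fibres of the two shift maps). The minimality characterisation in Proposition~\ref{prop:dalg}(4), applied to both $\Phi$ and $\Phi\inv$, then forces $\Phi(\Dalg[{\OSS[1]}])=\Dalg[{\OSS[2]}]$. Writing $S:=\sum_{b\in\al_2}s_b\in\Oalg[{\OSS[2]}]$, so that $S^n=\sum_{w\in\al_2^n}s_w$ by $s_us_v=s_{uv}$, I define
\begin{equation*}
  t_u := \Phi(\cha{\cyl{\emptyword}{u}})\,S^{|u|}\in\Oalg[{\OSS[2]}]\qquad(u\in\al_1^*),
\end{equation*}
where the cylinder is interpreted in $\OSS[1]$.

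Next I verify that $(\Phi|_{\Dalg[{\OSS[1]}]},(t_u))$ is a representation of $\OSS[1]$. Summing Proposition~\ref{prop:structure}\ref{item:6} over $\al_2^n$ yields $S^ng=\alpha^n(g)S^n$ for $g\in\Dalg[{\OSS[2]}]$, and combined with $\Phi\alpha=\alpha\Phi$ and the elementary identity $\cha{\cyl{\emptyword}{uv}}=\cha{\cyl{\emptyword}{u}}\cdot\alpha^{|u|}(\cha{\cyl{\emptyword}{v}})$ inside $\Dalg[{\OSS[1]}]$, this gives $t_ut_v=t_{uv}$. For the cylinder axiom, Lemma~\ref{lemma:structure}(4) kills all off-diagonal terms in $t_u^*t_u=\sum_{w,w'\in\al_2^{|u|}}s_w^*\Phi(\cha{\cyl{\emptyword}{u}})s_{w'}$, and a direct computation using Proposition~\ref{prop:structure}\ref{item:3} together with the conjugacy identifies the remaining diagonal sum with $\Phi(\cha{\cyl{u}{\emptyword}})$. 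The general statement $t_vt_u^*t_ut_v^*=\Phi(\cha{\cyl{u}{v}})$ then reduces to the shift-space identity $\cha{\cyl{u}{v}}=\cha{\cyl{\emptyword}{v}}\cdot\alpha^{|v|}(\cha{\cyl{u}{\emptyword}})$ and a second application of the same diagonal-killing trick to simplify $\Phi(\cha{\cyl{u}{v}})\,S^{|v|}(S^{|v|})^*\,\Phi(\cha{\cyl{\emptyword}{v}})=\Phi(\cha{\cyl{u}{v}})$. Theorem~\ref{theorem:univ} now yields a $*$-homomorphism $\psi:\Oalg[{\OSS[1]}]\to\Oalg[{\OSS[2]}]$ with $\psi(s_u)=t_u$ and $\psi|_{\Dalg[{\OSS[1]}]}=\Phi$.

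Running the same recipe with $\phi\inv$ in place of $\phi$ produces a candidate inverse $\widetilde\psi:\Oalg[{\OSS[2]}]\to\Oalg[{\OSS[1]}]$. The telescoping identity $\sum_{a\in\al_1}t_a=\Phi(\cha{\OSS[1]})\,S=S$ together with Lemma~\ref{lemma:structure}(4) gives $\psi\circ\widetilde\psi(s_b)=\cha{\cyl{\emptyword}{b}}\sum_{a'\in\al_2}s_{a'}=s_b$ for every $b\in\al_2$, while $\psi\circ\widetilde\psi$ restricts to the identity on $\Dalg[{\OSS[2]}]$; by symmetry $\widetilde\psi\circ\psi=\id$, so $\psi$ is a $*$-isomorphism. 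Properties (1), (2), (4) and (5) now follow from $\psi|_{\Dalg[{\OSS[1]}]}=\Phi$ together with the intertwining established in the first step; (6) holds because $t_u$ is manifestly homogeneous of degree $|u|$ for the gauge action of $\Oalg[{\OSS[2]}]$, which in turn fixes $\Dalg[{\OSS[2]}]$ pointwise; (3) follows from (6); and (7) is a restatement of $\psi(\sum_{a\in\al_1}s_a)=S=\sum_{b\in\al_2}s_b$. The main obstacle throughout is the algebraic verification of the cylinder axiom inside $\Oalg[{\OSS[2]}]$ rather than merely on the canonical Hilbert space representation; once that is in hand, all other verifications are bookkeeping.
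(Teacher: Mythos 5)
Your argument is correct and is essentially the paper's proof in different clothing: since $\Phi(\cha{\cyl{\emptyword}{u}})S^{|u|}=\sum_{v\in\al_2^{|u|}}s_v\cha{D(u,v)}$ where $D(u,v)=\{x\in\OSS[2]\mid vx\in\OSS[2],\ \phi\inv(vx)=u\phi\inv(x)\}$, your generators $t_u$ coincide with the paper's, and both proofs then verify the representation axioms, invoke universality, build the inverse symmetrically from $\phi\inv$, and read off properties (1)--(7). The one point to spell out when writing this up is that your second ``diagonal-killing'' step --- the vanishing of the cross terms of $S^{|v|}(S^{|v|})^*$ after compression by $\Phi(\cha{\cyl{\emptyword}{v}})$ --- is not an instance of Lemma \ref{lemma:structure}(4) but rests on the sets $D(u,v)$, $v\in\al_2^{|u|}$, being pairwise disjoint for fixed $u$ by injectivity of the conjugacy, which is exactly the fact the paper uses implicitly when it collapses the triple sum over $v_1,v_2,v_4$ to a sum over $v_1,v_2$ in its computation of $t_{u_1}t_{u_2}^*t_{u_2}t_{u_1}^*$.
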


\begin{proof}
Let $\phi$ be a conjugacy between $\OSS[2]$ and $\OSS[1]$, and let $\Phi$
be the map between the bounded functions on $\OSS[1]$ and the bounded
functions on $\OSS[2]$ defined by
\begin{equation*}
    f\mapsto f\circ\phi.
\end{equation*}
Then $\Phi(C(\OSS[1]))=C(\OSS[2])$,
$\Phi\circ\alpha=\alpha\circ\Phi$ and
$\Phi\circ\tra=\tra\circ\Phi$, so it follows from Proposition \ref{prop:dalg} that
$\Phi(\Dalg[{\OSS[1]}])=\Dalg[{\OSS[2]}]$.

Let $\al_1$ be the alphabet of $\OSS[1]$ and $\al_2$ the alphabet of $\OSS[2]$. For $u\in\alwords_1$ and $v\in\alwords_2$ with $|u|=|v|$ let
$D(u,v)=\{x\in\OSS[2]\mid vx\in\OSS[2],\ \phi(vx)=u\phi(x)\}$ and $Z(u)=\cyl{\emptyword}{u}$. Then we have that $\cha{D(u,v)}=\lambda_v(\Phi(\cha{Z(u)})\in\Dalg[{\OSS[2]}]$.
For $u\in\alwords_1$ let $t_u=\sum_{v\in\al_2^{|u|}}s_v\cha{D(u,v)}$.

We will show that $(\Phi,(t_u)_{u\in\alwords_1})$ is a representation of $\OSS[1]$ on $\Oalg[{\OSS[2]}]$. 
If $u_1,u_2\in\alwords_1$ and $v_1,v_2\in\alwords_2$ with $|u_1|=|v_1|$ and $|u_2|=|v_2|$, then we have 
\begin{equation*}
\cha{D(u_1u_2,v_1v_2)}=\lambda_{v_2}\Bigl(\lambda_{v_1}\bigl(\Phi(\cha{Z(u_1)})\bigr)\Phi(\cha{Z(u_2)})\Bigr), 
\end{equation*}
so it follows from Proposition 
\ref{prop:structure} that we have
\begin{align*}
\cha{D(u_1,v_1)}s_{v_2}\cha{D(u_2,v_2)}&=
\lambda_{v_1}\bigl(\Phi(\cha{Z(u_1)})\bigr)s_{v_2}\lambda_{v_2}\bigl(\Phi(\cha{Z(u_2)})\bigr)\\
&=s_{v_2}\lambda_{v_2}\Bigl(\lambda_{v_1}\bigl(\Phi(\cha{Z(u_1)})\bigr)\Phi(\cha{Z(u_2)})\Bigr)\\
&=s_{v_2}\cha{D(u_1u_2,v_1v_2)}.
\end{align*}
It follows that if $u_1,u_2\in\alwords_1$, then we have
\begin{equation*}
t_{u_1}t_{u_2}=\sum_{v_1\in\al_2^{|u_1|}}
s_{v_1}\cha{D(u_1,v_1)}
\sum_{v_2\in\al_2^{|u_2|}}s_{v_2}\cha{D(u_2,v_2)}
=\sum_{v_1\in\al_2^{|u_1|}}\sum_{v_2\in\al_2^{|u_2|}}
s_{v_1}s_{v_2}\cha{D(u_1u_2,v_1v_2)}
=t_{u_1u_2}.
\end{equation*}
We also have that
\begin{align*}
t_{u_1}t_{u_2}^*t_{u_2}t_{u_1}^*
&=\sum_{v_1\in\al_2^{|u_1|}}s_{v_1}\cha{D(u_1,v_1)}
\sum_{v_2\in\al_2^{|u_2|}}\cha{D(u_2,v_2)}s_{v_2}^*
\sum_{v_3\in\al_2^{|u_2|}}s_{v_3}\cha{D(u_2,v_3)}
\sum_{v_4\in\al_2^{|u_1|}}\cha{D(u_1,v_4)}s_{v_4}^*\\
&=\sum_{v_1\in\al_2^{|u_1|}}\sum_{v_2\in\al_2^{|u_2|}}\sum_{v_4\in\al_2^{|u_1|}}
s_{v_1}\cha{D(u_1,v_1)}\cha{D(u_2,v_2)}\cha{\cyl{v_2}{\emptyword}}\cha{D(u_2,v_2)}\cha{D(u_1,v_4)}s_{v_4}^*\\
&=\sum_{v_1\in\al_2^{|u_1|}}\sum_{v_2\in\al_2^{|u_2|}}\sum_{v_4\in\al_2^{|u_1|}}
s_{v_1}\cha{D(u_1,v_1)}\cha{D(u_2,v_2)}\cha{D(u_1,v_4)}s_{v_4}^*\\
&=\sum_{v_1\in\al_2^{|u_1|}}\sum_{v_2\in\al_2^{|u_2|}}
s_{v_1}\cha{D(u_1,v_1)}\cha{D(u_2,v_2)}s_{v_1}^*\\
&=\sum_{v_1\in\al_2^{|u_1|}}\sum_{v_2\in\al_2^{|u_2|}}
\alpha^{|v_1|}(\cha{D(u_1,v_1)}\cha{D(u_2,v_2)})s_{v_1}s_{v_1}^*\\
&=\sum_{v_1\in\al_2^{|u_1|}}\sum_{v_2\in\al_2^{|u_2|}}
\alpha^{|v_1|}(\cha{D(u_1,v_1)}\cha{D(u_2,v_2)})\cha{Z(v_1)}\\
&=\Phi(\cha{\cyl{u_2}{u_1}}).
\end{align*}
Thus $(\Phi,(t_u)_{u\in\alwords_1})$ is a representation of $\OSS[1]$ on $\Oalg[{\OSS[2]}]$. It follows that there exists an isomorphism $\psi$ from $\Oalg[{\OSS[1]}]$ to $\Oalg[{\OSS[2]}]$ such that $\psi(s_u)=t_u=\sum_{v\in\al_2^{|u|}}s_v\cha{D(u,v)}$ for every $u\in\alwords_1$ and $\psi(f)=\Phi(f)$ for every $f\in\Dalg[{\OSS[1]}]$. 

One can in a similarly way prove that there exists an isomorphism $\eta$ from $\Oalg[{\OSS[2]}]$ to $\Oalg[{\OSS[1]}]$ such that $\psi(s_v)=\sum_{u\in\al_1^{|v|}}s_u\cha{\tilde{D}(v,u)}$ for every $v\in\alwords_2$ and $\rho(f)=\Phi^{-1}(f)$ for every $f\in\Dalg[{\OSS[2]}]$ where $\tilde{D}(v,u)=\{x\in\OSS[1]\mid ux\in\OSS[1],\ \phi^{-1}(ux)=v\phi\inv(x)\}$. If $u,u'\in\alwords_1$  with $|u|=|u'|$, then $\sum_{v\in\al_2^{|u|}}\cha{\tilde{D}(v,u')}\Phi\inv(\cha{D(u,v)})=0$ if $u\ne u'$, and $\sum_{v\in\al_2^{|u|}}\cha{\tilde{D}(v,u')}\Phi\inv(\cha{D(u,v)})=\cha{\cyl{u}{\emptyword}}$ if $u=u'$. It follows that if $u\in\alwords_1$, then we have
\begin{align*}
\rho(\psi(s_u))&=\rho(\sum_{v\in\al_2^{|u|}}s_v\cha{D(u,v)})=
\sum_{v\in\al_2^{|u|}}\sum_{u'\in\al_1^{|v|}}s_{u'}\cha{\tilde{D}(v,u')}\Phi\inv(\cha{D(u,v)})\\
&=s_u\cha{\cyl{u}{\emptyword}}=s_us_u^*s_u=s_u. 
\end{align*}
In a similar way, we can show that $\psi(\rho(s_v))=s_v$ for every $v\in\alwords_2$. Thus $\rho$ is the inverse of $\psi$, and $\psi$ is an isomorphism.

Since $\psi(f)=\Phi(f)$ for $f\in\Dalg[{\OSS[1]}]$, it follows that $\psi$ has the properties (1),(2),(4) and (5).
If $z\in\T$ and $u\in\alwords_1$, then we have
\begin{equation*}
\psi\bigl(\gamma_z(s_u)\bigr)=\psi(z^{|u|}s_u)=z^{|u|}\sum_{v\in\al_2^{|u|}}s_v\cha{D(u,v)}
=\gamma_z\left(\sum_{v\in\al_2^{|u|}}s_v\cha{D(u,v)}\right)=\gamma_z\left(\psi(s_u)\right).
\end{equation*}
It follows that $\psi$ has property (6). It follows from this and Proposition \ref{prop:gauge} that $\psi$ also has property (3).

Let $v\in\alwords_2$. Then we have that $\sum_{u\in\al_1^{|v|}}\cha{D(u,v)}=\cha{\cyl{v}{\emptyword}}$. Thus we have for every $x\in\Falg[{\OSS[1]}]$ that
\begin{align*}
\psi\bigl(\lambda_{\OSS[1]}(x)\bigr)&=\psi\Biggl(\biggl(\sum_{a\in\al_1}s_a^*\biggr)x\biggl(\sum_{b\in\al_1}s_b\biggr)\Biggr)
=\sum_{a\in\al_1}\sum_{b\in\al_1}\sum_{c\in\al_2}\sum_{d\in\al_2}\cha{D(a,c)}s_c^*\psi(x)s_d\cha{D(b,d)}\\
&=\sum_{c\in\al_2}\sum_{d\in\al_2}\cha{\cyl{c}{\emptyword}}s_c^*\psi(x)s_d\cha{\cyl{d}{\emptyword}}
=\sum_{c\in\al_2}\sum_{d\in\al_2}s_c^*\psi(x)s_d
=\lambda_{\OSS[2]}\bigl(\psi(x)\bigr).
\end{align*}
This proves that $\psi$ has property (7).
\end{proof}

\section{Two-sided conjugacy} \label{sec:two-sided-conjugacy}
Let $\al$ be a finite alphabet and let
$\al^\Z$ be the infinite
product space $\prod_{n\in\Z} \al$
endowed with the product topology. The transformation
$\tsh$ on $\al^\Z$ given by $$\bigl(\tsh(x)\bigr)_i=x_{i+1},\ i\in
\Z,$$ is called the \emph{two-sided shift}. Let $\TSS$ be a 
closed subset of $\al^\Z$ such that $\tsh(\TSS)=\TSS$. The topological dynamical system
$(\TSS,\tsh_{|\TSS})$ is called a \emph{two-sided shift space}. We will denote $\tsh_{|\TSS}$ by $\tsh_{\TSS}$ or just $\tsh$ for simplicity.

Given a two-sided shift space $\TSS$ we can construct a one-sided
shift space, namely 
\begin{equation*}
\{(x_n)_{n\in\No}\mid (x_n)_{n\in\Z}\in\TSS\}.
\end{equation*}
We will denote this one-sided shift space by $\OSS[{\TSS}]$.

Let $\TSS[1]$ and $\TSS[2]$ be two two-sided shift spaces. We say that
$\TSS[1]$ and $\TSS[2]$ are \emph{(topological) conjugate} if there exists a
homeomorphism $\psi:\TSS[1]\to\TSS[2]$ such that
$\psi\circ\tsh_{\TSS[1]}=\tsh_{\TSS[2]}$. I will in this section show that if
$\TSS[1]$ and $\TSS[2]$ are conjugate, then $\Oalg[{\OSS[{\TSS[1]}]}]$ and
$\Oalg[{\OSS[{\TSS[2]}]}]$ are Morita equivalent (cf. Section
\ref{sec:morita}) . This was proved in  
\cite{MR1764930} in the case where $\TSS[1]$ and $\TSS[2]$ both
satisfy two conditions called (I) and (E), and later in
\cite{MR2041268} under the assumption of (I), but we will here prove it
without any restrictions on $\TSS[1]$ and $\TSS[2]$. We will in our
proof closely follow the proof of \cite[Theorem 3.11]{MR2041268}, but
we will modify our proof such that it will work without the
requirement that $\TSS[1]$ and $\TSS[2]$ satisfy condition (I).

Like Matsumoto does in \cite{MR1764930}, we will use the notation of
\emph{bipartite code} introduced by Nasu (cf. \cite{MR857201}
and \cite{MR1234883}) who showed that every conjugacy between
two-sided shift spaces can be factorized into compositions of
bipartite codes. We will here briefly recall the necessary
definitions:

Let $\al$, $\al'_1$ and $\al'_2$ be alphabets. A one-to-one map from $\al$
to $\al'_1\al'_2:=\{bc\mid b\in\al'_1,\ c\in\al'_2\}$ is called a
\emph{bipartite expression} of $A$. Let $\TSS[1]$ and $\TSS[2]$ be two
two-sided shift spaces and let $f_1:\al_1\to\al'_1\al'_2$ be a bipartite
expression of the alphabet $\al_1$ of $\TSS[1]$. A map
$\phi:\TSS[1]\to\TSS[2]$ is called a \emph{bipartite code induced by}
$f_1$ if there exists bipartite expression $f_2:\al_2\to\al'_2\al'_1$ of the
alphabet $\al_2$ of $\TSS[2]$ such that either of the following
\ref{item:10} or \ref{item:11} is the case: 
\begin{enumerate}
\item If $(a_i)_{i\in\Z}\in\TSS[1]$,
  $\phi\bigl((a_i)_{i\in\Z}\bigr)=(d_i)_{i\in\Z}$ and $f_1(a_i)=b_ic_i$ with
  $b_i\in\al'_1$ and $c_i\in\al'_2$ for $i\in\Z$, then $f_2(d_i)=c_ib_{i+1}$
  for all $i\in\Z$. \label{item:10}
\item If $(a_i)_{i\in\Z}\in\TSS[2]$,
  $\phi\bigl((a_i)_{i\in\Z}\bigr)=(d_i)_{i\in\Z}$ and $f_1(a_i)=b_ic_d$ with
  $b_i\in\al_1$ and $c_i\in\al_2$ for $i\in\Z$, then $f_2(d_i)=c_{i-1}b_{i}$
  for all $i\in\Z$. \label{item:11}
\end{enumerate}

It is easy to check that a bipartite code is a conjugacy, and that if
$\phi$ is a bipartite code, then so is $\phi\inv$.

\begin{theorem}[Theorem 2.4 of \cite{MR857201}] \label{theorem:nasu}
  Any conjugacy between two-sided shift spaces can be decomposed into
  a composition of bipartite codes.
\end{theorem}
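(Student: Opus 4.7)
The natural starting point is the Curtis--Hedlund--Lyndon theorem, which says that any conjugacy $\psi:\TSS[1]\to\TSS[2]$ between two-sided shift spaces is a sliding block code: there exist $m,n\ge 0$ and a local rule $\Psi:\lang^{m+n+1}(\TSS[1])\to\al_2$ with $\psi(x)_i=\Psi(x_{i-m},\dots,x_{i+n})$, and $\psi^{-1}$ admits a similar local rule. My plan is to induct on the total window size $m+n$, using bipartite codes to peel off one unit of memory or anticipation at each step.

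For the inductive step I would introduce an intermediate two-sided shift space $\TSS$ obtained as a $2$-block presentation of $\TSS[1]$: the alphabet of $\TSS$ is $\lang^2(\TSS[1])$ and the map $\TSS[1]\to\TSS$ sending $(x_i)_{i\in\Z}$ to $(x_ix_{i+1})_{i\in\Z}$ is a conjugacy. I would realize this conjugacy (or its composition with one step of $\psi$) as a bipartite code with appropriate intermediate alphabets $\al'_1,\al'_2$: $f_1$ would split each original letter into an ordered pair recording that letter together with the local data needed by $\Psi$, and $f_2$ would regroup these pieces one position later as prescribed by type \ref{item:10} or type \ref{item:11}. On the $2$-block presentation the conjugacy $\psi$ factors through a local rule of strictly smaller window, so the inductive hypothesis applies. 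The base case $m+n=0$ is a symbol relabeling, which is a (degenerate) bipartite code obtained by choosing $|\al'_1|=1$ and letting $f_1,f_2$ encode the bijection $\al_1\to\al_2$.

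The principal obstacle is the combinatorial bookkeeping needed to match the precise formulation of a bipartite code. One must pick $\al'_1,\al'_2$ so that the maps $f_1,f_2$ are one-to-one onto the relevant subsets of $\al'_1\al'_2$ and $\al'_2\al'_1$, verify shift-invariance of the intermediate shift space (a nontrivial check outside the finite type setting, since $\lang^2(\TSS[1])$ may omit blocks that appear admissible letter-by-letter), and track whether each reduction produces a type \ref{item:10} code or a type \ref{item:11} code, as these differ in how the regrouping is phased across positions. In practice one has to alternate between the two types in successive factors, which is part of the reason Nasu's original proof in \cite{MR857201} is phrased in the more specialized language of textile systems.
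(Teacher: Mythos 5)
The paper does not prove this theorem at all: it is imported verbatim as Theorem 2.4 of Nasu's paper \cite{MR857201}, so there is no internal proof to compare against. Judged on its own terms, your sketch starts from the right place (Curtis--Hedlund--Lyndon plus an induction on window size, with higher-block recodings realized as bipartite codes --- and indeed the passage to the $2$-block presentation \emph{is} a bipartite code, via $f_1(a)=a\bar a$ and $f_2(ab)=\bar a b$), but it contains a genuine gap at the base case. A conjugacy with $m+n=0$, i.e.\ a $1$-block code, is \emph{not} in general a symbol relabeling: the local rule need not be injective on letters, and the inverse need not be a $1$-block code. Concretely, let $\TSS[1]$ be the period-$3$ orbit of $\dotsm abcabc\dotsm$ and let $\Psi(a)=\Psi(b)=0$, $\Psi(c)=1$; the induced $1$-block map onto the period-$3$ orbit of $\dotsm 001001\dotsm$ is a conjugacy, $\Psi$ is not injective, and the inverse requires a window of length $2$. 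So your induction bottoms out in a case that is not degenerate and in fact still carries essentially all of the difficulty of the theorem.

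The structural reason the induction cannot be repaired by more bookkeeping of the same kind is that recoding the domain to a higher-block presentation shrinks the window of $\psi$ by one but enlarges the window of $\psi^{-1}$ by one, so any honest induction quantity must track both maps, and it does not decrease under domain recodings alone. Nasu's resolution is that the intermediate shift space in the inductive step is not a block presentation of $\TSS[1]$ (or of $\TSS[2]$) but a genuinely new ``bipartite subshift'' whose symbols mix partial data from \emph{both} the domain and the range --- this is exactly what the asymmetric format $f_1:\al_1\to\al'_1\al'_2$, $f_2:\al_2\to\al'_2\al'_1$ is designed to encode, with $\al'_1$ carrying $\TSS[1]$-type information and $\al'_2$ carrying $\TSS[2]$-type information. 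Your sketch, which only ever splits and regroups letters of $\TSS[1]$, cannot produce such intermediates, and that is the missing idea rather than a matter of combinatorial care.
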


Thus, in order to prove that if two two-sided shift spaces $\TSS[1]$
and $\TSS[2]$ are conjugate, then the $C^*$-algebras
$\Oalg[{\OSS[{\TSS[1]}]}]$ and $\Oalg[{\OSS[{\TSS[2]}]}]$ are
Morita equivalent, it is enough to prove it in the case where there
exists a bipartite code between $\TSS[1]$ and $\TSS[2]$.

So, let $\TSS[1]$ and $\TSS[2]$ be two-sided shift spaces with
alphabets $\al_1$ and $\al_2$ respectively, and let $\alb$ and $\alc$
be two alphabets, $f_1:\al_1\to\al'_1\al'_2$ and $f_2:\al_2\to\al'_2\al'_1$ two
bipartite expression of $\al_1$ and $\al_2$ respectively, and let
$\phi:\TSS[1]\to\TSS[2]$ be a map such that condition \ref{item:10}
above holds. Let
$\al$ be the disjoint union of $\al'_1$ and $\al'_2$, and let $\tilde{f}_1$
and $\tilde{f}_2$ denote the maps from $\OSS[{\TSS[1]}]$ and $\OSS[{\TSS[2]}]$
respectively to $\al^\No$ given by
$\tilde{f}_1((a_i)_{i\in\No})=(f_1(a_i))_{i\in\No}$ and
$\tilde{f}_2((d_i)_{i\in\No})=(f_2(d_i))_{i\in\No}$ respectively. It is
easy to check that $\tilde{f}_1$ is a homeomorphism from
$\OSS[{\TSS[1]}]$ to $\tilde{f}_1(\OSS[{\TSS[1]}])$, that $\tilde{f}_2$ is
a homeomorphism from $\OSS[{\TSS[2]}]$ to
$\tilde{f}_2(\OSS[{\TSS[2]}])$, that $\tilde{f}_1(\OSS[{\TSS[1]}])$ and
$\tilde{f}_2(\OSS[{\TSS[2]}])$ are disjoint, and that
$\tilde{f}_1(\OSS[{\TSS[1]}])\cup \tilde{f}_2(\OSS[{\TSS[2]}])$ is a
closed and shift invariant subset of $\al^\No$ and thus a one-sided
shift space. We will denote the latter by $\OSS$.

Let $i\in\{1,2\}$. Denote the empty word of $\alwords_i$ by
$\emptyword_i$ and the empty word of $\alwords$ by $\emptyword$. We
extend $f_i$ to a map from $\alwords_i$ to $\alwords$ by setting
$f_i(u_1u_2\dotsm u_n)=f_i(u_1)f_i(u_2)\dotsm f_i(u_m)$ for
$u_1u_2\dotsm u_m\in\alwords_i\setminus\{\emptyword_i\}$, and
$f_i(\emptyword_i)=\emptyword$. 

Let $p_1$ be the characteristic function of
$\tilde{f}_1(\OSS[{\TSS[1]}])$ and let $p_2$ be the characteristic function of
$\tilde{f}_2(\OSS[{\TSS[2]}])$. We then have that $p_1,p_2\in\Dalg\subseteq\Oalg$
because
$p_1=\sum_{a\in\al'_1}\cha{\cyl{\emptyword}{a}}=\sum_{a\in\al'_1}s_as_a^*$
and
$p_2=\sum_{d\in\al'_2}\cha{\cyl{\emptyword}{d}}=\sum_{d\in\al'_2}s_ds_d^*$. For
$u\in\alwords_1\setminus\{\emptyword_1\}$, we let $t_u=s_{f(u)}$, and we let 
$t_{\emptyword_1}=p_1$. Likewise, for
$u\in\alwords_2\setminus\{\emptyword_2\}$, we let $t_u=s_{g(u)}$, and
we let $t_{\emptyword_2}=p_2$. 
\begin{lemma} \label{lemma:complementary}
  We have $p_1+p_2=1$.
\end{lemma}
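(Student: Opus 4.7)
The plan is to just evaluate $p_1+p_2$ using the formulas already given for $p_1$ and $p_2$, and check that the result is the characteristic function of all of $\OSS$, which by Lemma \ref{lemma:structure}(1) is the unit of $\Oalg$.

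More precisely, I would first recall that by Lemma \ref{lemma:structure}(2) we have $s_a s_a^* = \cha{\cyl{\emptyword}{a}} = \cha{Z(a)}$ for every letter $a\in\al$, so
\begin{equation*}
p_1+p_2 = \sum_{a\in\al'_1} s_a s_a^* + \sum_{d\in\al'_2} s_d s_d^* = \sum_{c\in\al'_1\sqcup\al'_2} \cha{Z(c)} = \sum_{c\in\al} \cha{Z(c)},
\end{equation*}
where the last equality uses that $\al$ was defined as the disjoint union $\al'_1\sqcup\al'_2$.

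Next I would observe that the cylinders $Z(c)$ with $c\in\al$ are pairwise disjoint (an element of $\OSS\subseteq\al^\No$ begins with a unique letter) and that their union is all of $\OSS$ (every element of $\OSS$ does begin with \emph{some} letter in $\al$). Therefore as functions on $\OSS$ we have
\begin{equation*}
\sum_{c\in\al} \cha{Z(c)} = \cha{\OSS}.
\end{equation*}
Finally, Lemma \ref{lemma:structure}(1) tells us that $\cha{\OSS}=s_\emptyword$ is the unit of $\Oalg$, which gives $p_1+p_2=1$ as desired.

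There is no real obstacle here: the identity $p_1+p_2=1$ is essentially just the observation that $\tilde{f}_1(\OSS[{\TSS[1]}])$ and $\tilde{f}_2(\OSS[{\TSS[2]}])$ partition $\OSS$, combined with the fact that characteristic functions of cylinders $Z(c)$, $c\in\al$, furnish a partition of unity in $\Oalg$. The only thing one must be careful about is checking at the outset that the given expressions $\sum_{a\in\al'_1} s_a s_a^*$ and $\sum_{d\in\al'_2} s_d s_d^*$ really coincide with the characteristic functions of $\tilde{f}_1(\OSS[{\TSS[1]}])$ and $\tilde{f}_2(\OSS[{\TSS[2]}])$ respectively, but this is immediate from the form of the bipartite expressions $f_1$ and $f_2$: any sequence in $\tilde{f}_1(\OSS[{\TSS[1]}])$ begins with a letter in $\al'_1$, any sequence in $\tilde{f}_2(\OSS[{\TSS[2]}])$ begins with a letter in $\al'_2$, and these two conditions are mutually exclusive since $\al'_1\cap\al'_2=\emptyset$.
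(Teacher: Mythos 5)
Your proof is correct and follows the same route as the paper, which simply observes that $1=\sum_{a\in\al}s_as_a^*$ and that $\al$ is the disjoint union of $\al'_1$ and $\al'_2$; you merely spell out why that partition-of-unity identity holds. Nothing further is needed.
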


\begin{proof}
  This follows from the fact that $1=\sum_{a\in\al}s_as_a^*$.
\end{proof}

\begin{lemma} \label{lemma:corners}
  Let $i\in\{1,2\}$. Then there
  exists a $*$-isomorphism from 
  $\Oalg[{\OSS[{\TSS[i]}]}]$ to $p_i\Oalg p_i$ 
  which for
  $u\in\alwords_i$ maps $s_u$ to $t_u$.
\end{lemma}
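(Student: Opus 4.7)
\emph{Plan.} Fix $i=1$; the case $i=2$ is symmetric. I will build a representation of $\OSS[{\TSS[1]}]$ inside $p_1\Oalg p_1$, invoke the universal property (Theorem~\ref{theorem:univ}) to produce a $*$-homomorphism $\psi_1:\Oalg[{\OSS[{\TSS[1]}]}]\to p_1\Oalg p_1$ with $\psi_1(s_u)=t_u$, and apply the gauge-invariant uniqueness theorem (Theorem~\ref{theorem:uniqueness}) to conclude that $\psi_1$ is a $*$-isomorphism onto $p_1\Oalg p_1$.

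\emph{Building the representation.} The homeomorphism $\tilde{f}_1$ onto the clopen set $\tilde{f}_1(\OSS[{\TSS[1]}])$ induces, via pullback and extension by zero, a $*$-isomorphism $\Phi_1:\BL(\OSS[{\TSS[1]}])\to p_1\BL(\OSS)p_1$. A direct calculation on generating cylinders gives $\tilde{f}_1(\cyl[{\OSS[{\TSS[1]}]}]{u}{v})=\cyl[\OSS]{f_1(u)}{f_1(v)}$, so $\phi_1:=\Phi_1|_{\Dalg[{\OSS[{\TSS[1]}]}]}$ is an injective $*$-homomorphism into $p_1\Dalg p_1$. The axioms of Definition~\ref{def:rep} for $(\phi_1,(t_u))$ then reduce to the concatenation $f_1(u)f_1(v)=f_1(uv)$ together with the absorption identities $p_1s_{f_1(v)}=s_{f_1(v)}=s_{f_1(v)}p_1$, which follow from Proposition~\ref{prop:structure}\ref{item:6} and the identity $\alpha^{2|v|}(p_1)=p_1$ (since $\osh^2$ preserves the alternation class on $\OSS$).

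\emph{Gauge and uniqueness setup.} Because $p_1\in\Dalg$ is fixed by $\gaug$, the gauge restricts to an action on $p_1\Oalg p_1$. On a nonzero spanning element $p_1s_ufs_v^*p_1$, alternation in $\OSS$ forces $u,v$ to be empty or to start in $\al'_1$ and to satisfy $|u|\equiv|v|\pmod 2$; consequently $\gaug_z$ and $\gaug_{-z}$ agree on $p_1\Oalg p_1$. Setting $\tgaug_w:=\gaug_z$ for any $z\in\T$ with $z^2=w$ therefore gives a well-defined strongly continuous action of $\T$ on $p_1\Oalg p_1$ satisfying $\tgaug_w(t_u)=w^{|u|}t_u$. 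Combined with the injectivity of $\phi_1$, Theorem~\ref{theorem:uniqueness} yields the isomorphism once $\psi_1$ is known to be surjective.

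\emph{The main obstacle: surjectivity.} By Proposition~\ref{prop:oalg} it suffices to express each $p_1s_ufs_v^*p_1$ as an element of $\psi_1(\Oalg[{\OSS[{\TSS[1]}]}])$. Using Proposition~\ref{prop:structure}\ref{item:6} to pull $f$ past $s_u$ and the resolution of unity $1=\sum_{a\in\al}s_as_a^*$ to pad odd-length words by a letter from $\al'_2$, I would reduce to the even-length case $u=f_1(u')$, $v=f_1(v')$, where the element takes the form $t_{u'}ht_{v'}^*$ for some $h\in p_1\Dalg p_1$. The remaining claim $p_1\Dalg p_1\subseteq\phi_1(\Dalg[{\OSS[{\TSS[1]}]}])$ is the technically delicate step; I would prove it using the minimality characterization of $\Dalg[{\OSS[{\TSS[1]}]}]$ from Proposition~\ref{prop:dalg}(4), combined with the intertwining relation $\Phi_1\circ\alpha_{\OSS[{\TSS[1]}]}=\alpha^2_{\OSS}\circ\Phi_1$ (and an analogous identity for $\tra$) that follows from $\tilde{f}_1\circ\osh_{\OSS[{\TSS[1]}]}=\osh^2\circ\tilde{f}_1$. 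With this in hand, Theorem~\ref{theorem:uniqueness} completes the proof.
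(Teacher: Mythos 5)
Your overall architecture (a representation into the corner, the universal property, a square-root gauge action plus Theorem~\ref{theorem:uniqueness} for injectivity, then a direct attack on surjectivity) is exactly the paper's, and the cylinder computation, the absorption identities, the parity argument giving $\gaug_z=\gaug_{-z}$ on $p_1\Oalg p_1$, and the padding of odd-length words are all sound. The gap is in the one step you yourself flag as delicate. To prove $p_1\Dalg p_1\subseteq\phi_1(\Dalg[{\OSS[{\TSS[1]}]}])$ you propose to use Proposition~\ref{prop:dalg}(4) together with $\Phi_1\circ\alpha_{\OSS[{\TSS[1]}]}=\alpha^2_{\OSS}\circ\Phi_1$. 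But Proposition~\ref{prop:dalg}(4) only yields conclusions of the form $\Dalg\subseteq\Xalg$: it is a minimality statement, bounding $\Dalg$ from \emph{below}, whereas you need to bound a set from \emph{above} by $\Dalg[{\OSS[{\TSS[1]}]}]$. The only way to bring (4) to bear here is to apply it to $\Dalg$ (over $\OSS$) itself, exhibiting a $\cs$-subalgebra of $\BL(\OSS)$ containing $C(\OSS)$, closed under $\alpha$ and $\tra$ (not their squares), and contained in $\phi_1(\Dalg[{\OSS[{\TSS[1]}]}])+\phi_2(\Dalg[{\OSS[{\TSS[2]}]}])$. Since $\osh$ interchanges $\tilde{f}_1(\OSS[{\TSS[1]}])$ and $\tilde{f}_2(\OSS[{\TSS[2]}])$, your squared intertwining relation says nothing about whether $\alpha$ and $\tra$ carry $\phi_1(\Dalg[{\OSS[{\TSS[1]}]}])$ into $\phi_2(\Dalg[{\OSS[{\TSS[2]}]}])$ and back, and that is precisely what would have to be proved. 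As written, the crucial containment is asserted but not obtained.

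The paper sidesteps this entirely: it never proves $p_1\Dalg p_1\subseteq\phi_1(\Dalg[{\OSS[{\TSS[1]}]}])$. Instead it introduces the smaller subalgebra $\Aalg$ generated by the projections $\{s_v^*s_v\mid v\in\alwords\}$, observes that $\spa\{s_vfs_w^*\mid v,w\in\alwords,\ f\in\Aalg\}$ is already dense in $\Oalg$, and computes $p_1s_v^*s_vp_1$ by hand: for $|v|$ even and $p_1s_v^*s_vp_1\ne 0$ one has $v=f_1(u)$ and the element is $t_u^*t_u$, while for $|v|$ odd one uses $\osh(\OSS)=\OSS$ to write $p_1\cha{\cyl{v}{\emptyword}}p_1=p_1\cha{\bigcup_{a}\cyl{av}{\emptyword}}p_1$ and the identity $\cha{A\cup B}=\cha{A}+\cha{B}-\cha{A}\cha{B}$. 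If you prefer to keep your formulation, replace the appeal to Proposition~\ref{prop:dalg}(4) by an explicit verification that $\Phi_1^{-1}\bigl(p_1\cha{\cyl{u}{v}}p_1\bigr)\in\Dalg[{\OSS[{\TSS[1]}]}]$ for all $u,v\in\alwords$ (a case analysis on the parities of $|u|,|v|$ and on whether their first and last letters lie in $\al'_1$ or $\al'_2$); by Proposition~\ref{prop:dalg}(2) and multiplicativity of the compression $f\mapsto p_1fp_1$ on the commutative algebra $\Dalg$, this suffices. Either repair fills the hole; without one of them the surjectivity argument is incomplete.
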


\begin{proof}
For $g\in\BL(\OSS[{\TSS[i]}])$ and $x\in\OSS$ let
\begin{equation*}
\phi_i(g)(x)=
\begin{cases}
g\bigl(\tilde{f}_i\inv(x)\bigr)&\text{if }x\in\tilde{f}_i(\OSS[{\TSS[i]}]),\\
0&\text{if }x\notin\tilde{f}_i(\OSS[{\TSS[i]}]).\\
\end{cases}
\end{equation*}
It is easy to check that $\phi$ then is an injective $*$-homomorphism
from $\BL(\OSS[{\TSS[i]}])$ to 
$\BL(\OSS)$. It is also easy to check that if $u,v\in\alwords_i$ are
not both the empty word, then
$\phi_i(\cha{\cyl{u}{v}})=\cha{\cyl{f_i(u)}{f_i(v)}}$, and that
$\phi_i(\cha{\cyl{\emptyword}{\emptyword}})=\cha{\tilde{f}_i(\OSS[{\TSS[i]}])}=p_i$. Thus 
$\phi_i$ maps 
$\Dalg[{\OSS[{\TSS[i]}]}]$ into $\Dalg$.
We will now show that $((t_u)_{u\in\alwords_1},\phi_i)$ is a
representation of $\OSS[{\TSS[i]}]$ on $\Oalg$. 

It is clear that if $u,v\in\alwords_i\setminus\{\emptyword\}$, then
$t_ut_v=t_{uv}$. 
  If $u,v\in\alwords_i$, and not both $u$ and $v$ are equal to
  $\emptyword_i$, then
  $\tilde{f}(\cyl{u}{v})=\cyl{f(u)}{f(v)}\subseteq
  \tilde{f}(\OSS[{\TSS[1]}])$. It follows that for
  $u\in\alwords_i\setminus\{\emptyword_1\}$ we have
  \begin{equation} \label{eq:1}
    t_{\emptyword_i}t_u=p_it_ut_u^*t_u=\cha{\tilde{f}(\OSS[{\TSS[i]}])}
    \cha{\cyl{\emptyword_i}{u}}t_u=\cha{\cyl{\emptyword_i}{u}}t_u=t_u
  \end{equation}
  and 
  \begin{equation} \label{eq:2}
    t_ut_{\emptyword_i}=t_ut_u^*t_up_i=t_u\cha{\cyl{u}{\emptyword_i}}
    \cha{\tilde{f}(\OSS[{\TSS[1]}])}= t_u\cha{\cyl{u}{\emptyword_i}}=t_u.
  \end{equation}
Thus $t_ut_v=t_{uv}$ for all $u,v_\in\alwords_i$.
  
  If $u,v\in\alwords_i$, and not both $u$ and $v$ are equal to
  $\emptyword_i$, then we have
  \begin{equation*}
    t_vt_u^*t_ut_v^*=s_{f(v)}s_{f(u)}^*s_{f(u)}s_{f(v)}^*=
    \cha{\cyl{f(u)}{f(v)}}=\phi_i(\cha{\cyl{u}{v}}). 
  \end{equation*}
  Since we also have that
  $t_{\emptyword_i}t_{\emptyword_i}^*t_{\emptyword_i}t_{\emptyword_i}^*
  =p_i=\phi_i(\cha{\cyl{\emptyword_i}{\emptyword_i}})$,
it follows that $((t_u)_{u\in\alwords_1},\phi_i)$ is a representation of $\OSS[{\TSS[i]}]$.
Thus there exists a $*$-homomorphism $\psi_i$ from
$\Oalg[{\OSS[{\TSS[i]}]}]$ to $\Oalg$. 

  The gauge action $\gaug$ of $\Oalg$ satisfies that
  $\gaug_z(t_u)=\gaug_z(s_{f(u)})=
  (z)^{|f(u)|}s_{f(u)}=z^{2|u|}t_u$ for
  $u\in\alwords_i\setminus\{\emptyword_i\}$, and since it also
  satisfies $\gaug_z(t_{\emptyword_i})=t_{\emptyword_i}$, we have that
  $\gaug_z$ leaves $\cs(t_u\mid u\in\alwords_i)$ invariant, and that
  if $z_1^2=z_2^2$ then $\gaug_{z_1}$ and $\gaug_{z_2}$ act equally on
  $\cs(t_u\mid u\in\alwords_i)$. Thus, if we for every $z\in\T$ let
  $\tgaug_z=\gaug_{z'|\cs(t_u\mid u\in\alwords_i)}$ where $z'\in\T$
  satisfies $z'^2=z$, then
  $\tgaug:\T\to\aut(\cs(t_u\mid u\in\alwords_i))$ is action which satisfies
  $\tgaug_z(t_u)=z^{|u|}t_u$ for every $z\in\T$. 
  Thus it follows from Theorem \ref{theorem:uniqueness} that $\psi$ is
  injective. 

  It follows from \eqref{eq:1} and \eqref{eq:2} that $\cs(t_u\mid
  u\in\alwords_i)\subseteq p_i\Oalg p_i$, and thus that 
  $\psi_i(\Oalg[{\OSS[{\TSS[i]}]}])\subseteq p_i\Oalg p_i$.

  Let $\Aalg$ be the $\cs$-subalgebra of $\Dalg$
  generated by $\{\cha{\cyl{u}{\emptyword}}\mid u\in\alwords\}$ and 
  $\Aalg[i]$ the $\cs$-subalgebra of $\Dalg[{\OSS[{\TSS[i]}]}]$
  generated by $\{\cha{\cyl{u}{\emptyword}}\mid u\in\alwords_i\}$. We
  then have that $\psi_i(\Aalg[i])\subseteq p_i\Aalg p_i$. We will now
  prove the inverse inclusion. Since every $f\in\Aalg$ commutes with $p_i$
  (because $p_i\in\Dalg$, $\Aalg\subseteq\Dalg$ and $\Dalg$ is
  commutative) and $\Aalg$ is generated by $\{s_v^*s_v\mid
  v\in\alwords\}$, it is enough to prove that
  $p_is_v^*s_vp_i\in\psi_i(\Aalg[i])$ for $v\in\alwords$. Assume that
  $v\ne\emptyword$ and $p_is_v^*s_vp_i\ne 0$. Since 
  $s_a=s_b=s_{ab}=0$ if
  $a,b\in\al'_1$ or $a,b\in\al'_2$, we have that $v\in
  f_i(\alwords_i)$ if $|v|$ is even, and that $v_1\in\al\setminus\al_i$ and
  $v_2v_3\dotsm v_{|v|}\in f_i(\alwords_i)$ if $|v|$ is uneven. In the
  former case, we have $s_v=t_u$ for some $u\in\alwords_1$ and thus that
  $p_is_v^*s_vp_i\in\psi_i(\Aalg[i])$. We may therefore assume that
  $|v|$ is uneven. If
  $a\in\al_i$, then $s_{av}=t_u$ for some $u\in\alwords_1$ and 
  $p_i\cha{\cyl{av}{\emptyword}}p_i=p_is_{av}^*s_{av}p_i\in\psi_i(\Aalg[i])$. If
  $A,B\subseteq\OSS$ and  
  $p_i\cha{A}p_i,p_i\cha{B}p_i\in\psi_i(\Aalg[i])$, then $p_i\cha{A\cup
    B}p_i\in \psi_i(\Aalg[i])$ because $p_i\cha{A\cup
    B}p_i=p_i\cha{A}p_i+p_i\cha{B}p_i-p_i\cha{A}p_ip_i\cha{B}p_i$. It
  follows that we have
  \begin{equation*}
    p_is_v^*s_vp_i=p_i\cha{\cyl{v}{\emptyword}}p_i=
    p_i\cha{\cup_{a\in\al_i}\cyl{av}{\emptyword}}p_i\in\psi_i(\Aalg[i]).
  \end{equation*}
  Thus $p_ifp_i\in\psi_i(\Aalg[i])$ for
  all $f\in\Aalg$.

  It is not difficult to show that $\spa\{s_vfs_v^*\mid
  v\in\alwords,\ f\in\Aalg\}$ is dense in $\Dalg$, so it follows from 
  Proposition \ref{prop:oalg} that $\spa\{s_vfs_w^*\mid v,w\in\alwords,\
  f\in\Aalg\}$ is dense in $\Oalg$. So in order to show that $p_i\Oalg
  p_i\subseteq \psi_i(\Oalg[{\OSS[{\TSS[i]}]}])$, it is therefore
  enough to prove that $p_is_vfs_w^*p_i\in \cs(t_u\mid u\in\alwords_i)$ for
  $u,v\in\alwords$ and $f\in\Aalg$. If $v\ne\emptyword$ and
  $p_is_v\ne 0$, then $v\in f_i(\alwords_i)$ 
  if $|v|$ is even, and $v_1v_2\dotsm v_{|v|-1}\in f_i(\alwords_i)$
  and $v_{|v|}\in\al'_{3-i}$ if $|v|$ is uneven. In the former case,
  we have $s_v\in\cs(t_u\mid u\in\alwords_i)$ and
  $p_is_v=s_v=s_vp_i$. In the latter case, we have $s_v=s_vp_{3-i}=
  s_v\sum_{a\in\al'_{3-i}}s_as_a^*=
  s_v\sum_{a\in\al'_{3-i}}s_ap_is_a^*$ and $s_vs_a\in\cs(t_u\mid
  u\in\alwords_i)$ for $a\in\al'_{3-i}$. Thus
  $p_is_vfs_w^*p_i=s_vp_ifp_is_w^*\in\cs(t_u\mid u\in\alwords_i)$ if
  both $|v|$ and $|w|$ are even. If both $|v|$ and $|w|$ are uneven,
  then
  $p_is_vfs_w^*p_i=\sum_{a,b\in\al'_{3-i}}p_is_vs_ap_is_a^*fs_bp_is_b^*s_w^*p_i$. We
  have $s_a^*fs_b=0$ if $a\ne b$, and $s_a^*fs_b\in\Aalg$ if
  $a=b$. Thus $p_is_vfs_w^*p_i\in\cs(t_u\mid u\in\alwords_i)$ in this
  case. If $|v|$ is uneven and $|w|$ is even then 
  \begin{equation*}
    p_is_vfs_w^*p_i=\sum_{a\in\al'_{1-i}}p_is_vs_ap_is_a^*fp_is_w^*=
      \sum_{a\in\al'_{3-i}}p_is_vs_ap_is_a^*fs_as_a^*p_is_w^*=0,
  \end{equation*}
  because $s_as_a^*p_i$ for $a\in\al'_{3-i}$. Likewise,
  $p_is_vfs_w^*p_i=0$ if $|v|$ is even and $|w|$ is uneven. Thus
  $p_is_vfs_w^*p_i\in\cs(t_u\mid u\in\alwords_i)$ for all
  $v,w\in\alwords$ and all $f\in\Aalg$, which proves that $p_i\Oalg
  p_i\subseteq \cs(t_u\mid u\in\alwords_i)$.
\end{proof}

\begin{lemma} \label{lemma:full}
  The projections $p_1$ and $p_2$ are full in $\Oalg$.
\end{lemma}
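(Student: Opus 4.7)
The plan is to show that each projection $p_i$ generates $\Oalg$ as a closed two-sided ideal. By Lemma~\ref{lemma:complementary} we have $p_1+p_2=1$, so it is enough to show that $p_2$ lies in the closed two-sided ideal generated by $p_1$ (and symmetrically that $p_1$ lies in the ideal generated by $p_2$); then each of these ideals contains the unit of $\Oalg$, so each $p_i$ is full.

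The key observation is the alternation forced by the bipartite expressions. Since $\OSS$ is the disjoint union of $\tilde{f}_1(\OSS[{\TSS[1]}])$ and $\tilde{f}_2(\OSS[{\TSS[2]}])$, and since $f_1:\al_1\to\al'_1\al'_2$ and $f_2:\al_2\to\al'_2\al'_1$, every sequence in $\OSS$ has letters that alternate between $\al'_1$ and $\al'_2$. In particular, if $d\in\al'_2$ and $x\in\cyl{d}{\emptyword}$ (so that $dx\in\OSS$), then $dx$ starts with a letter in $\al'_2$, hence $dx\in\tilde{f}_2(\OSS[{\TSS[2]}])$, and so its second letter $x_0$ must lie in $\al'_1$; this forces $x\in\tilde{f}_1(\OSS[{\TSS[1]}])$. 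Consequently $\cha{\cyl{d}{\emptyword}}\le p_1$, i.e.\ $s_d^*s_d = s_d^*s_d\,p_1$ in $\Dalg$.

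With this in hand, using Lemma~\ref{lemma:structure} and the partial-isometry identity, I rewrite for each $d\in\al'_2$,
\begin{equation*}
s_ds_d^* \;=\; s_d(s_d^*s_d)s_d^* \;=\; s_d(s_d^*s_d\,p_1)s_d^* \;=\; s_dp_1s_d^*,
\end{equation*}
which manifestly lies in the ideal generated by $p_1$. Summing over $d\in\al'_2$ yields $p_2=\sum_{d\in\al'_2}s_dp_1s_d^*$ in that ideal, as wanted. A symmetric argument, starting from the observation that for $a\in\al'_1$ any $x\in\cyl{a}{\emptyword}$ satisfies $x_0\in\al'_2$ (so $s_a^*s_a\le p_2$), gives $p_1=\sum_{a\in\al'_1}s_ap_2s_a^*$ in the ideal generated by $p_2$. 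The only real content is the alternation argument justifying $s_d^*s_d\le p_1$; once that is in place the rest is a one-line manipulation, so I expect no serious obstacle.
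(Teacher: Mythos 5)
Your proof is correct and follows essentially the same route as the paper: the paper's own argument writes $1=p_1+p_2=p_1+\sum_{a\in\al'_2}s_as_a^*=p_1+\sum_{a\in\al'_2}s_ap_1s_a^*$ and concludes that the ideal generated by $p_1$ contains the unit, with the symmetric argument for $p_2$. The only difference is that you spell out the alternation argument justifying $s_d^*s_d\le p_1$, which the paper leaves implicit.
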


\begin{proof}
  We have
  $1=p_1+p_2=p_1+\sum_{a\in\al'_2}s_as_a^*=p_1+\sum_{a\in\al'_2}s_ap_1s_a^*$,
  which shows that the ideal generated by $p_1$ contains $1$, and thus
  that $p_1$ is a full projection in $\Oalg$. It follows from a
  similar argument that $p_2$ is a full projection in $\Oalg$.
\end{proof}

\begin{theorem} \label{prop:morita}
  Let $\TSS[1]$ and $\TSS[2]$ be two two-sided shift spaces for which
  there exists a bipartite code between $\TSS[1]$ and $\TSS[2]$. Then
  $\Oalg[{\OSS[{\TSS[1]}]}]$ and $\Oalg[{\OSS[{\TSS[2]}]}]$ are Morita
  equivalent.  
\end{theorem}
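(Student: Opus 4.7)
The setup immediately preceding the theorem has already done the heavy lifting: it produces a common one-sided shift space $\OSS$ together with projections $p_1,p_2\in\Dalg\subseteq\Oalg$ such that the $C^*$-algebras $\Oalg[{\OSS[{\TSS[1]}]}]$ and $\Oalg[{\OSS[{\TSS[2]}]}]$ can be realised as corners of $\Oalg$. My plan is to pass through $\Oalg$ and use the standard fact that a $C^*$-algebra is Morita equivalent to any of its full corners.

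More precisely, by Lemma \ref{lemma:corners} there are $*$-iso\-morph\-isms
\begin{equation*}
  \Oalg[{\OSS[{\TSS[1]}]}]\;\cong\;p_1\Oalg p_1,\qquad
  \Oalg[{\OSS[{\TSS[2]}]}]\;\cong\;p_2\Oalg p_2.
\end{equation*}
By Lemma \ref{lemma:full} the projections $p_1$ and $p_2$ are full in $\Oalg$. The standard corner construction (see the appendix on Morita equivalence) produces, for any full projection $p$ in a $C^*$-algebra $A$, an $A$--$pAp$ imprimitivity bimodule $Ap$ (with the obvious left and right actions and inner products $\langle x,y\rangle_{pAp}=x^*y$ and ${}_A\langle x,y\rangle=xy^*$, extended by taking closures of the ranges of the inner products, which equal $A$ and $pAp$ respectively precisely because $p$ is full). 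Consequently both $p_1\Oalg p_1$ and $p_2\Oalg p_2$ are Morita equivalent to $\Oalg$.

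The proof is finished by invoking transitivity and symmetry of Morita equivalence: combining the two imprimitivity bimodules $\Oalg p_1$ and $\Oalg p_2$ (or more symmetrically, their balanced tensor product $p_1\Oalg p_2$ viewed as a $p_1\Oalg p_1$--$p_2\Oalg p_2$ bimodule) yields a Morita equivalence between $\Oalg[{\OSS[{\TSS[1]}]}]$ and $\Oalg[{\OSS[{\TSS[2]}]}]$.

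I do not anticipate any serious obstacle, since the conceptual content has been placed in the two preceding lemmas. The only point that deserves a short remark in the final write-up is the verification (or citation) that a full corner $pAp$ is Morita equivalent to $A$ via the bimodule $Ap$; this is a textbook result which I would simply reference from the appendix rather than reproving in the body of the proof.
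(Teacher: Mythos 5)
Your proposal is correct and follows essentially the same route as the paper: both rest on Lemma \ref{lemma:corners} and Lemma \ref{lemma:full}, realising the two algebras as full corners of the common algebra $\Oalg$. The only cosmetic difference is that the paper invokes Lemma \ref{lemma:complementary} to apply the complementary-full-corners characterisation of Morita equivalence directly, whereas you pass through $\Oalg$ twice and use transitivity; the two are interchangeable given the facts recorded in the appendix.
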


\begin{proof}
  Follows from Theorem \ref{theorem:nasu}, Lemma
  \ref{lemma:complementary}, Lemma \ref{lemma:corners} and Lemma
  \ref{lemma:full}. 
\end{proof}

\section{Flow equivalence}
We will in this section prove that if two two-sided shift spaces are
\emph{flow equivalent} (we refer to \cite{MR1418293,MR758893,MR0405385} and
\cite[Section 13.6]{MR1369092} for the definition of flow equivalence),
then the corresponding $\cs$-algebras are
stable isomorphic (and thus Morita equivalent by
\cite[Theorem 1.2.]{MR0463928}). This was first prove by Matsumoto in
\cite{MR1852456} under the assumption that both the two two-sided
shift spaces satisfy condition (I). We will closely follow Matsumoto's
original proof, but change it so it also work without the assumption
of condition (I).

Like Matsumoto, we will use the concept of \emph{symbolic
  expansion}. If $\al$ is an alphabet, then we let $\al'$ denote the
disjoint union of $\al$ and an extra symbol $*$ which does not belong
to $\al$. Choose a distinct
element $a_0$ of $\al$. For every $x\in\al^\Z$, we let $\eta(x)$ be
the element of $\al'^\Z$ obtained by replacing every occurrence of
$a_0$ in $x$ by $a_0*$ and letting all the other terms in $x$ be as
they are. It is easy to see that if $\TSS$ is a two-sided shift space over
$\al$, then $\{\eta(x)\mid x\in\TSS\}\cup\{\tsh(\eta(x))\mid
x\in\TSS\}$ is a two-sided shift space over the alphabet $\al'$. We call this
shift space for a \emph{symbolic expansion} of $\TSS$ and denote it by
$\ETSS$.

Parry and Sullivan proved in \cite{MR0405385} that flow equivalence
among two-sided shift spaces of finite type is generated by conjugacy and 
symbolic expansion. As noticed in \cite{MR1852456}, Parry and
Sullivan's proof also hold for two-sided shift spaces in
general. Thus we have:

\begin{theorem} \label{theorem:flow}
  Flow equivalence among two-sided shift spaces is generated by
  conjugacy and symbolic expansion. 
\end{theorem}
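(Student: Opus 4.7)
The plan is to follow Parry and Sullivan's argument from \cite{MR0405385} and verify that their reasoning does not depend on the finite-type hypothesis, as observed in \cite{MR1852456}. Recall that flow equivalence of two-sided shift spaces $\TSS[1]$ and $\TSS[2]$ is defined by the existence of an orientation-preserving flow conjugacy between their suspensions (mapping tori). The easy direction is to check that both conjugacy and symbolic expansion induce flow equivalence: for a conjugacy $\psi:\TSS[1]\to\TSS[2]$ the map $[(x,t)]\mapsto[(\psi(x),t)]$ is a flow-preserving homeomorphism of the suspensions, while for a symbolic expansion $\ETSS$ of $\TSS$ a flow equivalence is obtained by rescaling orbit segments, stretching each unit interval sitting over an occurrence of $a_0$ to length two and leaving all others unchanged.

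For the converse direction, I would follow Parry-Sullivan using cross-sections of the suspension flow. Given a flow equivalence $\Psi$ between the suspension of $\TSS[1]$ and that of $\TSS[2]$, the image $\Psi(\TSS[1]\times\{0\})$ is a continuous cross-section of the suspension flow of $\TSS[2]$. The Parry-Sullivan lemma states that any two continuous cross-sections of the same flow can be linked by a finite chain of expansions and contractions together with conjugacies of the associated return-time systems. Applying this to the cross-sections $\Psi(\TSS[1]\times\{0\})$ and $\TSS[2]\times\{0\}$ of the suspension of $\TSS[2]$ yields the required decomposition of $\Psi$ into shift conjugacies and symbolic expansions.

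The main obstacle is verifying that the Parry-Sullivan decomposition genuinely requires no finite-type hypothesis. In their original argument the hypothesis appears only to ensure that intermediate cross-sections remain of finite type; the core construction---producing a common refinement of two cross-sections by inserting auxiliary letters along flow lines---is purely topological and only uses continuity of the return-time functions. Since the class of two-sided shift spaces is closed under both conjugacy and symbolic expansion (the set $\{\eta(x)\mid x\in\TSS\}\cup\{\tsh(\eta(x))\mid x\in\TSS\}$ is visibly closed and shift-invariant in $\al'^{\Z}$), the same chain of moves delivers the conclusion for arbitrary two-sided shift spaces.
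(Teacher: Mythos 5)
Your proposal is correct and follows essentially the same route as the paper, which simply invokes Parry and Sullivan's cross-section argument from \cite{MR0405385} together with Matsumoto's observation in \cite{MR1852456} that the finite-type hypothesis plays no role in it. Your additional remarks -- that both moves induce flow equivalences of the suspensions, and that the decomposition of a flow equivalence via cross-sections is purely topological -- are exactly the points the paper leaves to those references.
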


Thus, in order to prove that if two two-sided shift spaces are flow
equivalent, then the corresponding $\cs$-algebras are stable
isomorphic, it is enough to prove that if $\TSS$ is a two-sided shift
space, and $\ETSS$ is a symbolic expansion of $\TSS$, then
$\Oalg[{\OSS[{\TSS}]}]$ and $\Oalg[{\OSS[{\ETSS}]}]$ are stable
isomorphic.

Let $\TSS$, $\ETSS$, $\al$, $\al'$, $a_0$, $*$ and $\eta$ be as
above. For every $u\in\alwords$, we let $\eta(u)$ be
the element of $\al'^*$ obtained by replacing every occurrence of
$a_0$ in $u$ by $a_0*$ and letting all the other terms in $u$ be as
they are. We extend $\eta$ to a map from $\al^\No$ to
$\al'^\No$ in the obvious way. Let $\{\tilde{s}_u\mid u\in\al^*\}$ be
the canonical generators of 
$\Oalg[{\OSS[{\ETSS}]}]$. For $u\in\alwords\setminus\{\emptyword\}$,
we let $t_u=\tilde{s}_{\eta(u)}$, and we let
$t_\emptyword=\cha{\eta(\OSS[{\TSS}])}= 
\sum_{a\in\al}\cha{\cyl{\emptyword}{a}}=
\sum_{a\in\al}\tilde{s}_a\tilde{s}_a^*=1-\tilde{s}_*\tilde{s}_*^*$. 

\begin{lemma} \label{lemma:corner}
  Let $\{s_u\mid u\in\alwords\}$ be the family of canonical generators
  of $\Oalg[{\OSS[{\TSS}]}]$. Then there exists a $*$-isomorphism from
  $\Oalg[{\OSS[{\TSS}]}]$ to 
  $t_\emptyword\Oalg[{\OSS[{\ETSS}]}]t_\emptyword$ which for
  $u\in\alwords$ maps $s_u$ to $t_u$.
\end{lemma}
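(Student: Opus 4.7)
Following the template of Lemma \ref{lemma:corners}, I would proceed in three stages: construct a representation of $\OSS[{\TSS}]$ on $\Oalg[{\OSS[{\ETSS}]}]$, use the universal property to obtain a $*$-homomorphism $\psi:\Oalg[{\OSS[{\TSS}]}]\to\Oalg[{\OSS[{\ETSS}]}]$, and show it is injective with image equal to $t_\emptyword\Oalg[{\OSS[{\ETSS}]}]t_\emptyword$. Since $\eta:\OSS[{\TSS}]\to\OSS[{\ETSS}]$ is a homeomorphism onto the clopen subset $\eta(\OSS[{\TSS}])$, setting $\phi(g)(y)=g(\eta\inv(y))$ for $y\in\eta(\OSS[{\TSS}])$ and $0$ otherwise defines an injective $*$-homomorphism $\BL(\OSS[{\TSS}])\to\BL(\OSS[{\ETSS}])$. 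A direct computation gives $\phi(\cha{\cyl{u}{v}})=\cha{\cyl{\eta(u)}{\eta(v)}}$ when $v\ne\emptyword$ and $\phi(\cha{\cyl{u}{\emptyword}})=t_\emptyword\cha{\cyl{\eta(u)}{\emptyword}}$, so $\phi$ carries $\Dalg[{\OSS[{\TSS}]}]$ into $\Dalg[{\OSS[{\ETSS}]}]$. Checking the axioms of Definition \ref{def:rep} for $(\phi,(t_u)_{u\in\alwords})$ is routine: $t_ut_v=t_{uv}$ for nonempty $u,v$ is immediate from $\eta(u)\eta(v)=\eta(uv)$, while the $t_\emptyword$ cases reduce to the identities $\tilde{s}_{\eta(u)*}=\tilde{s}_{*\eta(u)}=0$, which hold because in $\ETSS$ every $*$ is preceded by $a_0$ and every non-terminal $a_0$ is followed by $*$. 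The cylinder axiom $t_vt_u^*t_ut_v^*=\phi(\cha{\cyl{u}{v}})$ is handled by the same case analysis. Theorem \ref{theorem:univ} then delivers $\psi$ with $\psi(s_u)=t_u$ and $\psi|_{\Dalg[{\OSS[{\TSS}]}]}=\phi$.

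The first real subtlety is injectivity via Theorem \ref{theorem:uniqueness}. The ambient gauge action on $\Oalg[{\OSS[{\ETSS}]}]$ scales $t_u=\tilde{s}_{\eta(u)}$ by $z^{|\eta(u)|}$, not $z^{|u|}$, so it is not directly usable. I would manufacture a better action by applying the universal property of $\Oalg[{\OSS[{\ETSS}]}]$ to the family $\tilde{t}_a=z\tilde{s}_a$ for $a\in\al$ and $\tilde{t}_*=\tilde{s}_*$. Since every cylinder relation is balanced in generators and adjoints, the scalar contributions cancel, so the relations are preserved; arguing as in Proposition \ref{prop:gauge} produces a strongly continuous action $\tgaug:\T\to\aut(\Oalg[{\OSS[{\ETSS}]}])$. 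Because $\eta(u)$ contains exactly $|u|$ letters from $\al$, we get $\tgaug_z(t_u)=z^{|u|}t_u$ for every $u$, including $u=\emptyword$ (where $\tgaug_z$ fixes $t_\emptyword=1-\tilde{s}_*\tilde{s}_*^*$). The action therefore restricts to $\psi(\Oalg[{\OSS[{\TSS}]}])$, and combined with the injectivity of $\phi$, Theorem \ref{theorem:uniqueness} yields that $\psi$ is a $*$-isomorphism onto its image.

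It remains to identify $\psi(\Oalg[{\OSS[{\TSS}]}])$ with $t_\emptyword\Oalg[{\OSS[{\ETSS}]}]t_\emptyword$. The inclusion $\subseteq$ is immediate from $t_\emptyword t_u=t_u t_\emptyword=t_u$ and $t_\emptyword\phi(g)=\phi(g)$. For $\supseteq$, I would first establish the auxiliary identity $t_\emptyword\Dalg[{\OSS[{\ETSS}]}]t_\emptyword=\phi(\Dalg[{\OSS[{\TSS}]}])$ by reducing to the generating projections $\cha{\cyl{v}{\emptyword}}$ for $v\in\al'^*$; every admissible such $v$ decomposes into blocks consisting of letters $a\ne a_0$, digraphs $a_0*$, and at most one leading $*$ or trailing $a_0$, so the compressions $t_\emptyword\cha{\cyl{v}{\emptyword}}t_\emptyword$ collapse to pullbacks of cylinders in $\OSS[{\TSS}]$ (or vanish when $v$ begins with $*$). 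Then Proposition \ref{prop:oalg} reduces everything to showing $t_\emptyword\tilde{s}_vg\tilde{s}_w^*t_\emptyword\in\psi(\Oalg[{\OSS[{\TSS}]}])$ for $v,w\in\al'^*$ and $g\in\Dalg[{\OSS[{\ETSS}]}]$; terms where $v$ or $w$ begins with $*$ vanish, and in the remaining terms one uses the identity $\tilde{s}_{a_0}=t_{a_0}\tilde{s}_*^*$ (valid because $\tilde{s}_{a_0}t_\emptyword=0$) together with Proposition \ref{prop:structure}(4) to push any stray $\tilde{s}_*$ factor across $g$ and absorb it on the far side of the compression. The main obstacle is precisely this last bookkeeping: systematically handling how dangling $*$-letters at the tail of $v$ or head of $w$ reorganize, so that every nonzero term ultimately sits in $\psi(\Oalg[{\OSS[{\TSS}]}])$.
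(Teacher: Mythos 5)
Your proposal follows the paper's proof essentially step for step: the same $\phi$, the same representation $((t_u)_{u\in\alwords},\phi)$, the same rescaled gauge action $\tgaug_z(\tilde{s}_a)=z\tilde{s}_a$, $\tgaug_z(\tilde{s}_*)=\tilde{s}_*$ for injectivity, and the same case analysis (absorbing stray $*$'s via $\tilde{s}_v=\tilde{s}_{v*}\tilde{s}_*^*$ and discarding terms beginning with $*$) for the corner identification, the only cosmetic difference being that the paper routes the last step through the subalgebra $\Aalg[{\OSS[{\ETSS}]}]$ generated by $\{\cha{\cyl{u}{\emptyword}}\mid u\in\als\}$ rather than through your auxiliary identity $t_\emptyword\Dalg[{\OSS[{\ETSS}]}]t_\emptyword=\phi(\Dalg[{\OSS[{\TSS}]}])$. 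One small correction: $\tilde{s}_{*\eta(u)}$ is not zero in general (words beginning with $*$ do occur in $\OSS[{\ETSS}]$, namely in the shifted copies $\tsh(\eta(x))$); the identity $t_\emptyword t_u=t_u$ instead rests on $\tilde{s}_*^*\tilde{s}_{\eta(u)}=0$, which holds because $\eta(u)_1\in\al$ is a letter different from $*$.
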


\begin{proof}
  For $f\in\BL(\OSS[{\TSS}])$ and $x\in\OSS[{\ETSS}]$ let 
  \begin{equation*}
    \phi(f)(x)=
    \begin{cases}
      f(y)&\text{if $x=\eta(y)$ for some $y\in\OSS[{\TSS}]$},\\
      0&\text{if }x\notin\eta(\OSS[{\TSS}]).
    \end{cases}
  \end{equation*}
  Then $\phi$ is a $*$-homomorphism from $\BL(\OSS[{\TSS}])$ to
  $\BL(\OSS[{\ETSS}])$. 
  If $u,v\in\alwords$ and $u$ and $v$ are not both equal to
  $\emptyword$, then we have
  \begin{equation*}
    \phi(\cha{\cyl{u}{v}})=\cha{\eta(\cyl{u}{v})}=\cha{\cyl{\eta(u)}{\eta(v)}}\in \Dalg[{\ETSS}]
  \end{equation*}
  Since we also have that
  $\phi(\cha{\cyl{\emptyword}{\emptyword}})=\phi(\cha{\OSS[{\TSS}]})=\cha{\eta(\OSS[{\TSS}])}=t_\emptyword\in\Dalg[{\OSS[{\ETSS}]}]$,
  it follows that $\phi$ maps $\Dalg[{\OSS[{\TSS}]}]$ into
  $\Dalg[{\OSS[{\ETSS}]}]$. We will now show that
  $((t_u)_{u\in\alwords},\phi)$ is a representation of $\OSS[{\TSS}]$
  on $\Oalg[{\OSS[{\ETSS}]}]$.
  
  If $u,v\in\alwords\setminus\{\emptyword\}$, then we have
  $t_ut_v=\tilde{s}_{\eta(u)}\tilde{s}_{\eta(v)}=\tilde{s}_{\eta(uv)}=t_{uv}$. If
  $u\in\alwords\setminus\{\emptyword\}$ then we have
  \begin{equation} \label{eq:5}
    t_\emptyword t_u=\cha{\eta(\TSS)}t_ut_u^*t_u=
    \cha{\eta(\TSS)}\cha{\cyl{\emptyword}{\eta(u)}}t_u=
    \cha{\cyl{\emptyword}{\eta(u)}}t_u= t_u 
  \end{equation}
  and
  \begin{equation} \label{eq:6}
    t_ut_\emptyword= t_ut_u^*t_u\cha{\eta(\TSS)}=
    t_u\cha{\cyl{\eta(u)}{\emptyword}}\cha{\eta(\TSS)}=
    t_u\cha{\cyl{\eta(u)}{\emptyword}}= t_u.
  \end{equation}
  Thus $t_ut_v=t_{uv}$ for all $u,v\in\alwords$.

  If $u,v\in\alwords$, and not both $u$ and $v$ are equal to
  $\emptyword$, then we have
  \begin{equation*}
    t_vt_u^*t_ut_v^*=\tilde{s}_{\eta(v)}\tilde{s}_{\eta(u)}^*
    \tilde{s}_{\eta(u)}\tilde{s}_{\eta(v)}^*=
    \cha{\cyl{\eta(u)}{\eta(v)}}=\cha{\eta(\cyl{u}{v})}=\phi(\cha{\cyl{u}{v}}). 
  \end{equation*}
  Since we also have that
  $t_{\emptyword}t_{\emptyword}^*t_{\emptyword}t_{\emptyword}^*
  =t_\emptyword=\cha{\eta(\OSS[{\TSS}])}=\phi(\cha{\cyl{\emptyword}{\emptyword}})$,
  we have that $((t_u)_{u\in\alwords},\phi)$ is a representation of $\OSS[{\TSS}]$
  on $\Oalg[{\OSS[{\ETSS}]}]$.
  Thus there exists a $*$-homomorphism $\psi:\Oalg[{\OSS[{\TSS}]}]\to
  \Oalg[{\OSS[{\ETSS}]}]$ such that $\psi(s_u)=t_u$ for every
  $u\in\alwords$, and such that the restriction of $\psi$ to
  $\Dalg[{\OSS[{\TSS}]}]$ is $\phi$ which is injective.

  It follows from the universal property of $\Oalg[{\OSS[{\ETSS}]}]$
  that there exists an action
  $\tgaug:\T\to\aut(\Oalg[{\OSS[{\ETSS}]}])$ such that
  $\tgaug_z(\tilde{s}_a)=z\tilde{s}_a$ for $z\in\T$ and $a\in\al$, and
  $\tgaug(\tilde{s}_*)=\tilde{s}_*$. We then have
  $\tgaug_z(t_u)=\tgaug_z(\tilde{s}_{\eta(u)})=z^{|u|}\tilde{s}_{\eta(u)}=
  z^{|u|}t_u$ for $u\in\alwords$ and $z\in\T$. Thus it follows from
  Theorem \ref{theorem:uniqueness} that $\psi$ is 
  injective.  

  It follows from equation \eqref{eq:5} and \eqref{eq:6} that $t_\emptyword
  t_ut_\emptyword=t_u$ for every $u\in\alwords$, so 
  $\psi(\Oalg[{\OSS[{\TSS}]}])=\cs(t_u\mid u\in\alwords)\subseteq t_\emptyword
  \Oalg[{\OSS[{\ETSS}]}]t_\emptyword$. 
  We will now prove that $t_\emptyword
  \Oalg[{\OSS[{\ETSS}]}]t_\emptyword\subseteq \cs(t_u\mid
  u\in\alwords)$. 
  Let $\Aalg[{\OSS[{\ETSS}]}]$ be the $\cs$-subalgebra of
  $\Dalg[{\OSS[{\ETSS}]}]$ 
  generated by $\{\cha{\cyl{u}{\emptyword}}\mid u\in\als\}$.
  It is not difficult to show that $\spa\{\tilde{s}_vf\tilde{s}_v^*\mid
  v\in{\alwords}',\ f\in\Aalg[{\OSS[{\ETSS}]}]\}$ is dense in
  $\Dalg[{\OSS[{\ETSS}]}]$, so it follows from  
  Proposition \ref{prop:oalg} that
  $\spa\{\tilde{s}_vf\tilde{s}_w^*\mid v,w\in\als,\
  f\in\Aalg[{\OSS[{\ETSS}]}]\}$ is dense in $\Oalg[{\OSS[{\ETSS}]}]$.
  It is therefore enough to prove that $t_\emptyword
  \tilde{s}_vf\tilde{s}_w^*t_\emptyword\in 
  \cs(t_u\mid u\in\alwords)$ for $v,w\in\als$ and 
  $f\in\Aalg[{\OSS[{\ETSS}]}]$. We will first prove this in the case
  where $v=w=\emptyword$. Since every $f\in\Aalg[{\OSS[{\ETSS}]}]$
  commutes with $t_\emptyword$ (because
  $t_\emptyword\in\Dalg[{\OSS[{\ETSS}]}]$,
  $\Aalg[{\OSS[{\ETSS}]}]\subseteq\Dalg[{\OSS[{\ETSS}]}]$ and
  $\Dalg[{\OSS[{\ETSS}]}]$ is 
  commutative) and $\Aalg[{\OSS[{\ETSS}]}]$ is generated by
  $\{\tilde{s}_v^*\tilde{s}_v\mid v\in\als\}$, it is enough to prove that
  $t_\emptyword \tilde{s}_v^*\tilde{s}_vt_\emptyword\in\cs(t_u\mid
  u\in\alwords)$ for $v\in\als$.

  Assume that $v\ne\emptyword$ and $t_\emptyword
  \tilde{s}_v^*\tilde{s}_vt_\emptyword\ne 0$. Since
  $\tilde{s}_{a_0}\tilde{s}_a=0$ for $a\in\al$, we 
  then have that $v_{|v|}\ne a_0$. So we either have that
  $v\in\eta(\alwords)$ or $v_1=*$. In the former case,
  $\tilde{s}_v\in\cs(t_u\mid u\in\alwords)$ and thus $t_\emptyword
  \tilde{s}_v^*\tilde{s}_vt_\emptyword\in\cs(t_u\mid u\in\alwords)$. In the latter
  case, we have $\cyl{v}{\emptyword}=\cyl{a_0v}{\emptyword}$ and
  $a_0v\in\eta(\alwords)$ from which it follows that 
  $s_v^*s_v=\cha{\cyl{v}{\emptyword}}=\cha{\cyl{a_0v}{\emptyword}}=
  \tilde{s}_{a_0v}^*\tilde{s}_{a_0v}\in\cs(t_u\mid u\in\alwords)$ and thus 
  $t_\emptyword \tilde{s}_v^*\tilde{s}_vt_\emptyword\in\cs(t_u\mid u\in\alwords)$.

  Let us now assume that $v\in\al'^*\setminus\{\emptyword\}$ and
  $t_\emptyword \tilde{s}_v\ne 0$. Since $t_\emptyword \tilde{s}_*=0$, we then have
  $v_1\in\al$. Thus, we either have that $v\in\eta(\alwords)$ or
  $v_{|v|}=a_0$. In the former case we have $\tilde{s}_v\in\cs(t_u\mid
  u\in\alwords)$ and $t_\emptyword \tilde{s}_v=\tilde{s}_v=\tilde{s}_v
  t_\emptyword$. In the latter case $t_\emptyword \tilde{s}_v=
  \tilde{s}_v=\tilde{s}_v\tilde{s}_*\tilde{s}_*^*=
  \tilde{s}_v\tilde{s}_*t_\emptyword \tilde{s}_*^*$ and
  $\tilde{s}_v\tilde{s}_*=\tilde{s}_{v*}\in\cs(t_u\mid u\in\alwords)$. 

  Thus if $v,w\in\al'^*$, $f\in\Aalg[{\OSS[{\ETSS}]}]$ and
  $t_\emptyword \tilde{s}_vf\tilde{s}_w^*t_\emptyword\ne 0$, then one
  of the following cases holds:
  \begin{enumerate}
  \item $t_\emptyword\tilde{s}_vf\tilde{s}_w^*t_\emptyword=
    \tilde{s}_vt_\emptyword f t_\emptyword \tilde{s}_w^*$ and
    $\tilde{s}_v,\tilde{s}_w\in\cs(t_u\mid u\in\alwords)$. 
  \item $t_\emptyword\tilde{s}_vf\tilde{s}_w^*t_\emptyword=
    \tilde{s}_v\tilde{s}_*t_\emptyword \tilde{s}_*^*f\tilde{s}_* t_\emptyword
    \tilde{s}_*^*\tilde{s}_w^*$ and
    $\tilde{s}_v\tilde{s}_*,\tilde{s}_w\tilde{s}_*\in\cs(t_u\mid u\in\alwords)$.
  \item $t_\emptyword \tilde{s}_vf\tilde{s}_w^*t_\emptyword=
    \tilde{s}_v\tilde{s}_*\tilde{s}_*^*f 
    t_\emptyword \tilde{s}_w^*$. 
  \item $t_\emptyword \tilde{s}_vf\tilde{s}_w^*t_\emptyword=
    \tilde{s}_vt_\emptyword f \tilde{s}_*\tilde{s}_*^*\tilde{s}_w^*$. 
  \end{enumerate}
  In the first case $t_\emptyword \tilde{s}_vf\tilde{s}_w^*t_\emptyword\in
  \cs(t_u\mid u\in\alwords)$ because $t_\emptyword ft_\emptyword\in
  \cs(t_u\mid u\in\alwords)$ as shown above. In the second case,
  $t_\emptyword \tilde{s}_vf\tilde{s}_w^*t_\emptyword\in \cs(t_u\mid u\in\alwords)$
  because $\tilde{s}_*^*f\tilde{s}_*\in\Aalg[{\OSS[{\ETSS}]}]$ and
  thus $t_\emptyword\tilde{s}_*^*f\tilde{s}_*t_\emptyword\in
  \cs(t_u\mid u\in\alwords)$ as shown above. The third and forth cases
  can actually not happen, because since $\tilde{s}_*\tilde{s}_*^*$
  commutes with every element of $\Aalg[{\OSS[{\ETSS}]}]$ (because
  $\tilde{s}_*\tilde{s}_*^*\in\Dalg[{\OSS[{\ETSS}]}]$,
  $\Aalg[{\OSS[{\ETSS}]}]\subseteq\Dalg[{\OSS[{\ETSS}]}]$ and
  $\Dalg[{\OSS[{\ETSS}]}]$ is commutative) and $t_\emptyword
  \tilde{s}_*=0$, we have that $\tilde{s}_*\tilde{s}_*^*f
  t_\emptyword= t_\emptyword f \tilde{s}_*\tilde{s}_*^*=0$.

  Hence $t_\emptyword \tilde{s}_vf\tilde{s}_w^*t_\emptyword\in
  \cs(t_u\mid u\in\alwords)$ for $u,v\in\al'^*$ and
  $f\in\Aalg[{\OSS[{\ETSS}]}]$, and we thus have $t_\emptyword
  \Oalg[{\OSS[{\ETSS}]}]t_\emptyword\subseteq \cs(t_u\mid u\in\alwords)$.
\end{proof}

\begin{lemma} \label{lemma:full2}
  The projection $t_\emptyword$ is full in $\Oalg[{\OSS[{\ETSS}]}]$.
\end{lemma}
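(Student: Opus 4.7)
Write $J$ for the closed two-sided ideal of $\Oalg[{\OSS[{\ETSS}]}]$ generated by $t_\emptyword$. Fullness of $t_\emptyword$ is the assertion $J=\Oalg[{\OSS[{\ETSS}]}]$, or equivalently $1\in J$. Since $1=\sum_{a\in\al'}\tilde{s}_a\tilde{s}_a^*$ and, by definition, $t_\emptyword=\sum_{a\in\al}\tilde{s}_a\tilde{s}_a^*$, the identity splits as
\begin{equation*}
  1 = t_\emptyword + \tilde{s}_*\tilde{s}_*^*,
\end{equation*}
so the whole task reduces to showing that $\tilde{s}_*\tilde{s}_*^*\in J$.

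The plan is to exploit the structural constraint that, in any element of $\ETSS$, the symbol $*$ only appears immediately after the chosen letter $a_0\in\al$. Concretely, I would first verify that if $*z\in\OSS[{\ETSS}]$, then $z$ must begin with a letter of $\al$, and indeed $z\in\eta(\OSS[{\TSS}])$; this is the one place where one has to inspect the definition of the symbolic expansion, and it is the main (though small) obstacle in the argument. Once this is established, it follows that
\begin{equation*}
  \cyl{*}{\emptyword}=\{z\in\OSS[{\ETSS}]\mid *z\in\OSS[{\ETSS}]\}\subseteq\eta(\OSS[{\TSS}]),
\end{equation*}
so that the characteristic functions satisfy $\cha{\cyl{*}{\emptyword}}\le\cha{\eta(\OSS[{\TSS}])}=t_\emptyword$ in $\Dalg[{\OSS[{\ETSS}]}]$. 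Translating to the generators via Lemma \ref{lemma:structure}(2), this reads
\begin{equation*}
  \tilde{s}_*^*\tilde{s}_* = \tilde{s}_*^*\tilde{s}_*\cdot t_\emptyword.
\end{equation*}

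Finally, using that $\tilde{s}_*$ is a partial isometry,
\begin{equation*}
  \tilde{s}_*\tilde{s}_*^* \;=\; \tilde{s}_*\tilde{s}_*^*\tilde{s}_*\tilde{s}_*^* \;=\; \tilde{s}_*\bigl(\tilde{s}_*^*\tilde{s}_*\cdot t_\emptyword\bigr)\tilde{s}_*^* \;\in\; J,
\end{equation*}
which combined with the decomposition $1=t_\emptyword+\tilde{s}_*\tilde{s}_*^*$ yields $1\in J$, as required. The whole argument is short; the only content lies in the combinatorial observation that a $*$ at position $0$ of a one-sided sequence forces the rest of the sequence to lie in $\eta(\OSS[{\TSS}])$, everything else being formal manipulation with the partial isometry relations already established in Lemma \ref{lemma:structure}.
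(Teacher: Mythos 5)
Your proof is correct and is essentially the paper's argument: the paper simply writes $1=t_\emptyword+\tilde{s}_*\tilde{s}_*^*=t_\emptyword+\tilde{s}_*t_\emptyword\tilde{s}_*^*$, and your identity $\tilde{s}_*\tilde{s}_*^*=\tilde{s}_*(\tilde{s}_*^*\tilde{s}_*t_\emptyword)\tilde{s}_*^*$ is the same computation with the implicit step $\tilde{s}_*^*\tilde{s}_*\le t_\emptyword$ (i.e.\ $\cyl{*}{\emptyword}\subseteq\eta(\OSS[{\TSS}])$) made explicit. No issues.
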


\begin{proof}
  We have that
  $1=\sum_{a\in\al'}\tilde{s}_a\tilde{s}_a^*=
  t_\emptyword+\tilde{s}_*\tilde{s}_*^*=
  t_\emptyword+\tilde{s}_*t_\emptyword \tilde{s}_*^*$ from which the
  conclusion follows.
\end{proof}



\begin{theorem} \label{theorem:flowmorita}
  Let $\TSS[1]$ and $\TSS[2]$ be two two-sided shift spaces which are
  flow equivalent. Then $\Oalg[{\OSS[{\TSS[1]}]}]$ and
  $\Oalg[{\OSS[{\TSS[2]}]}]$ are Morita equivalent.
\end{theorem}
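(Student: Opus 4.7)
The plan is to reduce the claim to the two elementary moves that generate flow equivalence by Theorem \ref{theorem:flow}, namely two-sided conjugacy and symbolic expansion, and then invoke the transitivity of Morita equivalence. Since flow equivalence is the equivalence relation generated by these two operations, it suffices to show that each of them individually implies Morita equivalence of the associated one-sided $\cs$-algebras, and to string the two cases together along a finite chain connecting $\TSS[1]$ to $\TSS[2]$.

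The conjugacy case is already handled: if $\TSS[1]$ and $\TSS[2]$ are conjugate two-sided shifts, then by Theorem \ref{theorem:nasu} the conjugacy factors as a finite composition of bipartite codes, and Theorem \ref{prop:morita} gives Morita equivalence of the associated $\cs$-algebras at each step.

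For the symbolic expansion case, suppose $\ETSS$ is a symbolic expansion of $\TSS$. Then Lemma \ref{lemma:corner} supplies a $*$-isomorphism of $\Oalg[{\OSS[{\TSS}]}]$ onto the corner $t_\emptyword \Oalg[{\OSS[{\ETSS}]}] t_\emptyword$, and Lemma \ref{lemma:full2} shows that $t_\emptyword$ is a full projection. Since a full corner $p A p$ of a $\cs$-algebra $A$ is always Morita equivalent to $A$ (with imprimitivity bimodule $Ap$, a standard fact recalled in the appendix on Morita equivalence), we conclude that $\Oalg[{\OSS[{\TSS}]}]$ is Morita equivalent to $\Oalg[{\OSS[{\ETSS}]}]$.

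Combining the two cases along a chain of conjugacies and symbolic expansions connecting $\TSS[1]$ and $\TSS[2]$, and using the transitivity of Morita equivalence, yields the desired conclusion. No genuine obstacle remains: the substantive content has been absorbed into Theorem \ref{theorem:flow}, Theorem \ref{prop:morita}, and Lemmas \ref{lemma:corner} and \ref{lemma:full2}, so the proof of this theorem is a short bookkeeping argument that assembles these ingredients.
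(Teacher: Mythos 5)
Your proposal is correct and follows essentially the same route as the paper: reduce to the generating moves via Theorem \ref{theorem:flow}, handle conjugacy via Theorem \ref{prop:morita}, handle symbolic expansion via the full-corner Lemmas \ref{lemma:corner} and \ref{lemma:full2}, and chain by transitivity. If anything, you are slightly more explicit than the paper's one-line proof, which omits the reference to Theorem \ref{prop:morita} for the conjugacy case.
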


\begin{proof}
  This follows from Theorem \ref{theorem:flow} and Lemma
  \ref{lemma:corner} and \ref{lemma:full2}.
\end{proof}

\section{The $K$-theory of $C^*$-algebras associated to shift spaces }

Since $K_0(\Xalg)$ and $K_1(\Xalg)$ are invariants of a $\cs$-algebra
$X$, it follows 
from the previous section that $K_0(\Oalg)$, $K_1(\Oalg)$ and
$K_0(\Falg)$ are invariants of $\OSS$. In this section, we will
present formulas based on $l$-past equivalence for these
invariants. This was done in \cite{MR1646513,MR1691469,MR1852456}
by Matsumoto for the case of
one-sided shift spaces of the form $\OSS_\Lambda$, where $\Lambda$ is
a two-sided shift space and generalized to the general case in
\cite{speciale} (see also \cite{MR2360917}).
I will not here prove the formulas for $K_0(\Oalg)$, $K_1(\Oalg)$ and
$K_0(\Falg)$, because that would require a knowledge about $K$-theory
for $C^*$-algebras that I do not expect the reader to have, but only
establish the necessary setup and state the 
theorems which give the formulas. The interested reader can find
proofs of these theorems in the above mentioned references.

From these formulas, one can directly prove that $K_0(\Oalg)$, $K_1(\Oalg)$ and
$K_0(\Falg)$ are
invariants of $\OSS$ without involving $\cs$-algebras. This is done
(for one-sided shift spaces of the form $\OSS_\Lambda$, where
$\Lambda$ is a two-sided shift space) in
Matsumoto's very interesting paper \cite{MR1710375}, where also other
invariants of shift spaces are presented.

Let $\OSS$ be a one-sided shift space. 
We will for each $l\in\No$ define an equivalence relation on $\OSS$
called \emph{$l$-past equivalence}. These equivalence relations were
introduced by Matsumoto in \cite{MR1691469}. For $k\in\No$ and $x\in\OSS$
let $\Past_k(x)=\{u\in\al^k\mid ux\in\OSS\}$. 
If $x,y\in\OSS$ and $l\in\No$, then we say that \emph{$x$ and $y$ are
  $l$-past equivalent} and write $x\sim_l y$ if
$\bigcup_{k=0}^l\Past_k(x)=\bigcup_{k=0}^l\Past_k(y)$. Notice that
since $\al^k$ is finite for each $k\in\No$, we have for each
$l\in\No$ only finitely many $l$-past equivalence classes. We let
$m(l)$ be this number of $l$-past equivalence classes, and we denote the
$l$-past equivalence classes by
$\E_1^l,\E_2^l,\dotsc,\E_{m(l)}^l$.
For each  $l\in \No,\ j\in \{1,2,\dots ,m(l)\}$ and $i\in \{1,2,\dots
,m(l+1)\}$, let
\[ I_l(i,j)=\left\{ \begin{array}{ll}
        1 & \textrm{if }
        \E_i^{l+1}\subseteq \E_j^l \\
        0 & \textrm{otherwise.}
\end{array} \right.\]

Let $F$ be a finite set and $i_0\in F$. Then we denote by $e_{i_0}$
the element in $\Z^F$ for which
\[e_{i_0}(i)=\left\{ \begin{array}{ll} 1 & \textrm{if } i=i_0\\
0 & \textrm{otherwise.}
\end{array} \right. \]
Let $0\le k \le l$. Then we have that $x\sim_l y\implies
\Past_k(x)=\Past_k(y)$. We can therefore for $i\in\{1,2,\dots,m(l)\}$
define $\Past_k(E_i^l)$ to be $\Past_k(x)$ for some $x\in E_i^l$. Let
$M_k^l$ be defined by
\begin{equation*}
  M_k^l=\bigl\{i\in\{1,2\dots ,m(l)\}\mid
\Past_k(\E_i^l)\ne \emptyset\bigr\}.
\end{equation*}
Notice that if $\OSS$ is of the form $\OSS[{\TSS}]$ for some two-sided
shift space $\TSS$ (this is equivalent to $\osh(\OSS)=\OSS$), then
$M_k^l=\{1,2,\dots,m(l)\}$ for all $0\le k \le l$.

If $j\in M_k^l$ and $I_l(i,j)=1$, then $i\in M_k^{l+1}$, so there exists a
positive linear map from $\Z^{M_k^l}$ to $\Z^{M_k^{l+1}}$ given by
\[e_j\mapsto \sum_{i\in M_k^{l+1}}I_l(i,j)e_i.\]
We denotes this map by $I_k^l$.

For a subset $\E$ of $\OSS$ and a $u\in \alwords$, let
$u\E=\{u x\in \OSS \mid x\in \E\}$.
For each $l\in \No,\ j\in \{1,2,\dots ,m(l)\},\ i\in \{1,2,\dots
,m(l+1)\}$ and $a\in \al$, let
\[ A_l(i,j,a)=\left\{ \begin{array}{ll}
        1 & \textrm{if }
        \emptyset \ne a\E_i^{l+1}\subseteq \E_j^l \\
        0 & \textrm{otherwise.}
\end{array} \right.\]

Let $0\le k\le l$. If $j\in M_k^l$ and if there exists an $a\in \al$ such
that $A_l(i,j,a)=1$, then $i\in M_{k+1}^{l+1}$. Thus
there exists a positive linear map from $\Z^{M_k^l}$ to
$\Z^{M_{k+1}^{l+1}}$ given by
\[e_j\mapsto \sum_{i\in M_{k+1}^{l+1}}\sum_{a\in \al}A_l(i,j,a)e_i.\]
We denote this map by $A_k^l$.

\begin{lemma} \label{kisomt}
Let $0\le k \le l$. Then the following
diagram commutes:
\begin{displaymath}
\xymatrix{ \Z^{M_k^l} \ar[r]^{I_k^l} \ar[d]_{A_k^l} & \Z^{M_k^{l+1}} \ar[d]^{A_k^{l+1}} \\
        \Z^{M_{k+1}^{l+1}} \ar[r]^{I_{k+1}^{l+1}} &
        \Z^{M_{k+1}^{l+2}}. }
\end{displaymath}
\end{lemma}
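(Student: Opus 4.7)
The approach is to verify the diagram commutes by computing the images of basis vectors $e_j$ ($j \in M_k^l$) under both compositions, expressing the result as a sum of basis vectors $e_{i'}$ indexed by $i' \in M_{k+1}^{l+2}$, and then comparing the coefficients of each $e_{i'}$ letter by letter. Unpacking the definitions of $I$ and $A$, this reduces to verifying that for every fixed $j \in M_k^l$, $i' \in M_{k+1}^{l+2}$, and $a \in \al$,
\begin{equation*}
\sum_{i \in M_k^{l+1}} I_l(i,j)\,A_{l+1}(i',i,a) \;=\; \sum_{i \in M_{k+1}^{l+1}} A_l(i,j,a)\,I_{l+1}(i',i).
\end{equation*}

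The first observation I would make is that each side is either $0$ or $1$: on the left, $A_{l+1}(i',i,a)=1$ forces $i$ to be the unique $(l{+}1)$-past class containing the nonempty set $a\E^{l+2}_{i'}$, and on the right, $I_{l+1}(i',i)=1$ forces $i$ to be the unique $(l{+}1)$-past class containing $\E^{l+2}_{i'}$. So the claim amounts to showing that ``LHS $=1$'' and ``RHS $=1$'' describe the same combinatorial condition on $(j,i',a)$. I would also note that the index-set constraints ($i\in M_k^{l+1}$ on the left, $i\in M_{k+1}^{l+1}$ on the right) come for free: if $\E_i^{l+1}\subseteq \E_j^l$ and $j\in M_k^l$, then $\Past_k(\E_i^{l+1})=\Past_k(\E_j^l)\ne\emptyset$, and similarly $i'\in M_{k+1}^{l+2}$ forces any $i$ with $\E^{l+2}_{i'}\subseteq \E^{l+1}_i$ to lie in $M_{k+1}^{l+1}$.

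The core of the argument is the following lemma, which I will state and prove: if $x,y\in\OSS$ with $x\sim_{l+1}y$ and $ax\in\OSS$ for some $a\in\al$, then $ay\in\OSS$ and $ax\sim_l ay$. This follows because past equivalence of depth $l{+}1$ implies equality of $\Past_m$ for each $0\le m\le l{+}1$ (since the $\Past_m$ are the length-$m$ slices of $\bigcup_{k=0}^{l+1}\Past_k$), and for $1\le m\le l$ one has $b_1\dotsm b_m\in\Past_m(ax)$ iff $b_1\dotsm b_m a\in\Past_{m+1}(x)$, a condition that depends only on the $\sim_{l+1}$-class of $x$.

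Applying this lemma in both directions gives the equivalence: if the LHS is $1$ via some index $i$, pick any $x\in\E^{l+2}_{i'}$; since $ax\in a\E^{l+2}_{i'}\subseteq \E^{l+1}_i\subseteq \E^l_j$, letting $i''$ be the $(l{+}1)$-past class of $x$ we have $\E^{l+2}_{i'}\subseteq \E^{l+1}_{i''}$, and by the lemma $a\E^{l+1}_{i''}\ne\emptyset$ with $a\E^{l+1}_{i''}\subseteq \E^l_j$ (the latter because all images are in the $l$-past class of $ax$, which is $\E^l_j$), making the RHS equal $1$. The reverse direction is symmetric. Once this equivalence is established, both compositions act identically on each basis vector and the diagram commutes. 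The only moderately delicate point is the $\sim_{l+1}\Rightarrow\sim_l$ transport lemma; everything else is bookkeeping.
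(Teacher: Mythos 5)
Your proposal is correct and follows essentially the same route as the paper: both reduce the claim to the coefficient identity $\sum_{i}A_{l+1}(i',i,a)I_l(i,j)=\sum_{i}I_{l+1}(i',i)A_l(i,j,a)$ and verify that each side equals $1$ precisely when $\emptyset\ne a\E_{i'}^{l+2}\subseteq\E_j^l$. The only difference is that you make explicit, via your transport lemma ($x\sim_{l+1}y$ and $ax\in\OSS$ imply $ay\in\OSS$ and $ax\sim_l ay$), the existence-and-uniqueness claims for the intermediate classes that the paper simply asserts.
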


\begin{proof}
  Let $j\in M_k^l$, $h\in M_{k+1}^{l+2}$ and $a\in \al$. If $\emptyset
  \ne a\E_h^{l+2}\subseteq \E_j^l$, then there exists exactly one
  $i\in M_k^{l+1}$ such that $\E_i^{l+1}\subseteq \E_j^l$ and
  $\emptyset \ne a\E_h^{l+2}\subseteq \E_i^{l+1}$; and there exists
  exactly one $i'\in M_{k+1}^{l+1}$ such that $\E_h^{l+2}\subseteq
  \E_{i'}^{l+1}$ and $\emptyset \ne a\E_{i'}^{l+1}\subseteq \E_j^l$.
  If $a\E_h^{l+2}=\emptyset $ or $a\E_h^{l+2}\nsubseteq \E_j^l$,
  then there does not exists an $i\in M_k^{l+1}$ such that
  $\E_i^{l+1}\subseteq \E_j^l$ and $\emptyset \ne a\E_h^{l+2}\subseteq
  \E_i^{l+1}$; and there does not exists an $i'\in M_{k+1}^{l+1}$ such
  that $\E_h^{l+2}\subseteq \E_{i'}^{l+1}$ and $\emptyset \ne
  a\E_{i'}^{l+1}\subseteq \E_j^l$. Hence we have 
  \[\sum_{i\in M_k^{l+1}}A_{l+1}(h,i,a)I_l(i,j)=\sum_{i\in
    M_{k+1}^{l+1}}I_{l+1}(h,i)A_l(i,j,a) .\]
  It follows from this that
  \begin{eqnarray*}
    A_k^{l+1}(I_k^l(e_j))&=& A_k^{l+1}\left(\sum_{i\in
        M_k^{l+1}}I_l(i,j)e_i\right)\\
    &=& \sum_{h\in M_{k+1}^{l+2}}\sum_{a\in
      \al}A_{l+1}(h,i,a)\sum_{i\in M_k^{l+1}}I_l(i,j)e_h\\
    &=& \sum_{h\in M_{k+1}^{l+2}}\sum_{i\in M_{k+1}^{l+1}}\sum_{a\in
      \al}I_{l+1}(h,i)A_l(i,j,a)e_h\\
    &=& I_{k+1}^{l+1}\left(\sum_{i\in M_{k+1}^{l+1}}\sum_{a\in
        \al}A_l(i,j,a)e_i\right) \\
    &=& I_{k+1}^{l+1}(A_k^l(e_j))
  \end{eqnarray*}
  for every $j\in M_k^l$.
  Thus the diagram commutes.
\end{proof}

For $k\in \No$, the inductive limit
$\limm(\Z^{M_k^l},(\Z^+)^{M_k^l},I_k^l)$ will be denoted by
$(\Z_{\OSS_k},\Z_{\OSS_k}^+)$. It follows from Lemma \ref{kisomt}
that the family $\{A_k^l\}_{l\ge k}$ induces a positive, linear map $A_k$ from
$\Z_{\OSS_k}$ to $\Z_{\OSS_{k+1}}$.

Let $0\le k<l$. Denote by $\delta_k^l$ the linear map from $\Z^{M_k^l}$ to
$\Z^{M_{k+1}^l}$ given by
\begin{equation*}
  e_j\mapsto
  \begin{cases}
    e_j&\text{if }j\in M_{k+1}^l,\\
    0&\text{if }j\notin M_{k+1}^l,
  \end{cases}
\end{equation*}
for $j\in M_k^l$. It is easy to check that the following diagram
\begin{displaymath}
\xymatrix{ \Z^{M_k^l}
  \ar[r]^{\delta_k^l} \ar[d]_{I_k^l} & \Z^{M_{k+1}^l} \ar[d]^{I_{k+1}^l} \\
\Z^{M_k^{l+1}} \ar[r]^{\delta_k^{l+1}} & \Z^{M_{k+1}^{l+1}} }
\end{displaymath}
commutes.

Thus the family $\{\delta_k^l\}_{l\ge k}$ induces a positive, linear map from
$\Z_{\OSS_k}$ to $\Z_{\OSS_{k+1}}$ which we denote by
$\delta_k$. Since the diagram
\begin{displaymath}
\xymatrix{ \Z^{M_k^l}
  \ar[r]^{\delta_k^l} \ar[d]_{A_k^l} & \Z^{M_{k+1}^l} \ar[d]^{A_{k+1}^l} \\
\Z^{M_{k+1}^{l+1}} \ar[r]^{\delta_{k+1}^{l+1}} & \Z^{M_{k+2}^{l+1}}  }
\end{displaymath}
commutes for every $0\le k<l$, the diagram
\begin{displaymath}
\xymatrix{ \Z_{\OSS_k} \ar[r]^{\delta_k} \ar[d]_{A_k} &
  \Z_{\OSS_{k+1}} \ar[d]^{A_{k+1}} \\
\Z_{\OSS_{k+1}} \ar[r]^{\delta_{k+1}} & \Z_{\OSS_{k+2}}}
\end{displaymath}
commutes.

We denote the inductive limit $\limm(\Z_{\OSS_k},\Z_{\OSS_k}^+,A_k)$
by $(\Delta_{\OSS},\Delta_{\OSS}^+)$. Since the previous diagram
commutes, the family $\{\delta_k\}_{k\in\No}$ induces a positive, linear map from
$\Delta_{\OSS}$ to $\Delta_{\OSS}$ which we denote by
$\delta_{\OSS}$.

\begin{theorem}
Let $\OSS$ be a one-sided shift space. Then there exists an
isomorphism $\phi:K_0(\Falg)\to\Delta_{\OSS}$ which satisfies that
$\phi(K_0^+(\Falg))=\Delta^+_{\OSS}$ and that
$\phi\circ(\lambda_{\OSS})_0=\delta_{\OSS}\circ\phi$. 
\end{theorem}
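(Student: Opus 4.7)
The plan is to realize $\Falg$ as an AF-algebra whose Bratteli diagram is precisely the two-parameter system of integer matrices $\{I_k^l, A_k^l\}$ used in the construction of $\Delta_\OSS$. For $0\le k\le l$, let $\Falg^{k,l}\subseteq\Falg$ be the linear span of $\{s_v\cha{\E_j^l}s_w^* : j\in M_k^l,\ v,w\in\Past_k(\E_j^l)\}$. One first checks $\cha{\E_j^l}\in\Dalg$: each $l$-past equivalence class is an atom of the finite Boolean algebra generated by $\{\cyl{u}{\emptyword} : u\in\al_l\}$, whose characteristic functions lie in $\Dalg$ by Proposition~\ref{prop:dalg}. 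Lemma~\ref{lemma:structure}(4) and commutativity of $\Dalg$ then give the matrix-unit relation $(s_v\cha{\E_i^l}s_w^*)(s_{v'}\cha{\E_j^l}s_{w'}^*)=\delta_{ij}\delta_{ww'}s_v\cha{\E_i^l}s_{w'}^*$, so $\Falg^{k,l}\cong\bigoplus_{j\in M_k^l}M_{|\Past_k(\E_j^l)|}(\C)$. Density of $\bigcup_{k\le l}\Falg^{k,l}$ in $\Falg$ follows from Proposition~\ref{prop:oalg} combined with the density of $\spa\{\cha{\E_j^l}\}$ in $\Dalg$.

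\textbf{Bratteli diagram on $K_0$ and passage to the limit.} Identify $K_0(\Falg^{k,l})=\Z^{M_k^l}$ by $[s_v\cha{\E_j^l}s_v^*]\mapsto e_j$ for any $v\in\Past_k(\E_j^l)$. Then the inclusion $\Falg^{k,l}\hookrightarrow\Falg^{k,l+1}$ induces $I_k^l$ on $K_0$ via the partition refinement $\cha{\E_j^l}=\sum_i I_l(i,j)\cha{\E_i^{l+1}}$, and the inclusion $\Falg^{k,l}\hookrightarrow\Falg^{k+1,l+1}$ induces $A_k^l$ via the identity
\[s_v\cha{\E_j^l}s_v^*=\sum_{a\in\al}s_{va}\,\lambda_a(\cha{\E_j^l})\,s_{va}^*=\sum_{a,i}A_l(i,j,a)\,s_{va}\cha{\E_i^{l+1}}s_{va}^*,\]
obtained by inserting $1=\sum_a s_as_a^*$, applying Proposition~\ref{prop:structure}(4), and using the identification $\lambda_a(\cha{\E_j^l})=\sum_i A_l(i,j,a)\cha{\E_i^{l+1}}$ which is immediate from the definition of $A_l(i,j,a)$. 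Continuity of $K_0$ on AF-inductive limits, applied to this two-parameter Bratteli diagram, then produces a positive isomorphism $\phi:K_0(\Falg)\to\Delta_\OSS$ with $\phi(K_0^+(\Falg))=\Delta_\OSS^+$, since the matrix-unit inclusions preserve positivity on each level.

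\textbf{Intertwining $\lambda_\OSS$ with $\delta_\OSS$ (main obstacle).} For the final compatibility $\phi\circ(\lambda_\OSS)_0=\delta_\OSS\circ\phi$, compute $\lambda_\OSS(s_v\cha{\E_j^l}s_v^*)$ directly: by Lemma~\ref{lemma:structure}(4) the only surviving term in $\sum_{a,b}s_a^*s_v\cha{\E_j^l}s_v^*s_b$ occurs at $a=b=v_1$, and using the commutation $\cha{\cyl{v_1}{\emptyword}}s_{v_2\cdots v_k}=s_{v_2\cdots v_k}\cha{\cyl{v}{\emptyword}}$ (derivable from Proposition~\ref{prop:structure}(4)) together with the inclusion $\E_j^l\subseteq\cyl{v}{\emptyword}$ (since $v\in\Past_k(\E_j^l)$), the whole expression collapses to a matrix unit at a shifted level of the Bratteli diagram. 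One then matches this on each basis element $e_j\in\Z^{M_k^l}$ with the effect of $\delta_k^l$. The principal obstacle is that $\lambda_\OSS$ is merely a linear (and not multiplicative) map on $\Falg$, so the induced endomorphism $(\lambda_\OSS)_0$ of $K_0(\Falg)$ must be defined with care---e.g.\ directly via its action on matrix units as computed above, or via the factorization $\lambda_\OSS(x)=V^*xV$ with $V=\sum_a s_a$ viewed as implementing a corner isomorphism after suitable normalization---and one must trace the resulting map through both layers of the inductive limit defining $\Delta_\OSS$ (first over $l$ using $\{I_k^l\}$, then over $k$ using $\{A_k\}$) to verify the intertwining under $\phi$.
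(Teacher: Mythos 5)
First, a point of reference: the paper gives no proof of this theorem at all --- the author explicitly states that the $K$-theory formulas are only stated, with proofs deferred to Matsumoto's papers and to \cite{speciale} --- so your proposal has to stand on its own. Your first two steps are a sound outline of the standard Bratteli-diagram argument: $\cha{\E_j^l}\in\Dalg$, the matrix-unit computation (modulo the typo $\delta_{ww'}$ where the Kronecker delta should pair the inner indices, i.e.\ $\delta_{wv'}$), and the identification of the inclusions $\Falg^{k,l}\subseteq\Falg^{k,l+1}$ and $\Falg^{k,l}\subseteq\Falg^{k+1,l+1}$ with $I_k^l$ and $A_k^l$ via $\lambda_a(\cha{\E_j^l})=\sum_iA_l(i,j,a)\cha{\E_i^{l+1}}$ are all correct. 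One justification is false as stated, however: $\spa\{\cha{\E_j^l}\}$ is \emph{not} dense in $\Dalg$; its closure is the subalgebra $\Aalg$ generated by $\{\cha{\cyl{u}{\emptyword}}\mid u\in\alwords\}$, which is proper already for the full $2$-shift (there $\Aalg=\C 1$ while $\Dalg=C(\OSS)$). The density of $\bigcup_{k\le l}\Falg^{k,l}$ in $\Falg$ is still true, but you need the intermediate fact, used by the paper in the proof of Lemma~\ref{lemma:corners}, that $\spa\{s_ufs_v^*\mid |u|=|v|,\ f\in\Aalg\}$ is already dense in $\Falg$ (absorb the $\Dalg$-coefficients into longer words via $\cha{\cyl{u}{v}}=s_v\cha{\cyl{u}{\emptyword}}s_v^*$), and then approximate within $\Aalg$.

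The genuine gap is your third step, and it is not just a matter of ``defining $(\lambda_{\OSS})_0$ with care.'' Your own computation shows that $\lambda_{\OSS}$ sends the matrix unit $s_v\cha{\E_j^l}s_w^*$ of $\Falg^{k,l}$ to $s_{v_2\cdots v_k}\cha{\E_j^l}s_{w_2\cdots w_k}^*$ in $\Falg^{k-1,l}$: it \emph{lowers} the filtration index $k$, whereas $\delta_k^l$ \emph{raises} it. Carrying out the matching you defer exposes the problem. For the full $2$-shift, $\Falg$ is the UHF algebra of type $2^\infty$, $K_0(\Falg)\cong\Z[1/2]\cong\Delta_{\OSS}$ with $[s_vs_v^*]=2^{-|v|}$, and $\lambda_{\OSS}(s_vs_v^*)=s_{v_2\cdots v_k}s_{v_2\cdots v_k}^*$, so $[\lambda_{\OSS}(p)]=2[p]$; on the other hand every $\delta_k^l$ is the identity and every $A_k$ is multiplication by $2$, so $\delta_{\OSS}$ is multiplication by $1/2$. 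No isomorphism of $\Z[1/2]$ conjugates multiplication by $2$ into multiplication by $1/2$, so the assignment $[p]\mapsto[\lambda_{\OSS}(p)]$ cannot satisfy $\phi\circ(\lambda_{\OSS})_0=\delta_{\OSS}\circ\phi$ for \emph{any} $\phi$. The map that actually induces $\delta_k^l$ under your identification is $s_v\cha{\E_j^l}s_v^*\mapsto s_{av}\cha{\E_j^l}s_{av}^*$ for a letter $a$ with $av\in\Past_{k+1}(\E_j^l)$ --- conjugation by a single $s_a$, which increases $k$ --- not compression by $\sum_as_a$; note also that $V=\sum_as_a$ has $V^*V=\sum_a\cha{\cyl{a}{\emptyword}}$, a non-scalar (and possibly non-invertible) positive function, so your proposed ``corner isomorphism after suitable normalization'' is not available in general. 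To prove claim (3) you must first pin down what $(\lambda_{\OSS})_0$ means for this non-multiplicative map --- the paper never defines it --- and then establish the intertwining for that definition; as written, your sketch produces the map going the opposite way through the inductive limit.
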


For every $l\in \No$ denote by $B^l$ the linear map from $\Z^{m(l)}$
to $\Z^{m(l+1)}$ given by
\[e_j\mapsto \sum_{i=1}^{m(l+1)}\left(I_l(i,j)-\sum_{a\in \al}A_l(i,j,a)\right)e_i.\]

\noindent One can easily check that the following diagram commutes for every
$l\in \No$:
\begin{displaymath}
\xymatrix{ \Z^{m(l)}
  \ar[r]^{B^l} \ar[d]_{I_0^l} & \Z^{m(l+1)} \ar[d]^{I_0^{l+1}} \\
\Z^{m(l+1)} \ar[r]^{B^{l+1}} & \Z^{m(l+2)}. }
\end{displaymath}
Hence the family $\{B^l\}_{l\in\No}$ induces a linear map $B$ from
$\Z_{\OSS_0}$ to $\Z_{\OSS_0}$.

\begin{theorem} \label{heks}
Let $\OSS$ be a one-sided shift space. Then
\[K_0(\Oalg)\cong \Z_{\OSS_0}/B\Z_{\OSS_0},\]
and
\[K_1(\Oalg)\cong \ker(B).\]
\end{theorem}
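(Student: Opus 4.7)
The plan is to compute $K_*(\Oalg)$ via a Pimsner-Voiculescu-style six-term exact sequence arising from the gauge action, and then translate the resulting description from $\Delta_{\OSS}$ into the concrete formula involving $\Z_{\OSS_0}$ and the map $B$.

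First, I would use the strongly continuous gauge action $\gaug:\T\to\aut(\Oalg)$ from Proposition \ref{prop:gauge}, whose fixed-point algebra is $\Falg$. By Rieffel's theorem, $\Oalg\rtimes_\gaug\T$ is Morita equivalent to $\Falg$, and the dual $\Z$-action $\hat\gaug$ corresponds under this equivalence (at the level of $K$-theory) to the shift map $\lambda_{\OSS}:\Falg\to\Falg$. Combining the Pimsner-Voiculescu exact sequence for $\hat\gaug$ with Takai duality $(\Oalg\rtimes_\gaug\T)\rtimes_{\hat\gaug}\Z\cong\Oalg\otimes\Kalg$, one obtains the six-term exact sequence
\begin{displaymath}
\xymatrix{
K_0(\Falg) \ar[r]^{1-(\lambda_{\OSS})_*} & K_0(\Falg) \ar[r] & K_0(\Oalg) \ar[d] \\
K_1(\Oalg) \ar[u] & K_1(\Falg) \ar[l] & K_1(\Falg) \ar[l]^{1-(\lambda_{\OSS})_*}
}
\end{displaymath}

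Second, since $\Dalg$ is AF (a commutative $\cs$-algebra generated by projections) and $\Falg$ is an increasing union of finite corners over $\Dalg$, the algebra $\Falg$ is itself AF and hence $K_1(\Falg)=0$. The six-term sequence therefore collapses to
\[K_0(\Oalg)\cong K_0(\Falg)/\bigl(1-(\lambda_{\OSS})_*\bigr)K_0(\Falg),\qquad K_1(\Oalg)\cong\ker\bigl(1-(\lambda_{\OSS})_*\bigr).\]
Transporting these through the previous theorem's isomorphism $\phi:K_0(\Falg)\to\Delta_{\OSS}$, which satisfies $\phi\circ(\lambda_{\OSS})_0=\delta_{\OSS}\circ\phi$, yields $K_0(\Oalg)\cong\Delta_{\OSS}/(1-\delta_{\OSS})\Delta_{\OSS}$ and $K_1(\Oalg)\cong\ker(1-\delta_{\OSS})$.

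Third, I would identify these with the stated formulas using the canonical inductive-limit map $\iota:\Z_{\OSS_0}\to\Delta_{\OSS}$. At each finite stage $l$ the formula $B^l=I_0^l-\sum_a A_0^l$ encodes ``inclusion minus letter action''; after passing through the inductive limit $\Delta_{\OSS}=\limm(\Z_{\OSS_k},A_k)$, in which the $A_k$'s become the identity on the limit, the map $B$ on $\Z_{\OSS_0}$ corresponds exactly to $1-\delta_{\OSS}$ on $\Delta_{\OSS}$. A diagram chase then shows that $\iota$ induces isomorphisms $\Z_{\OSS_0}/B\Z_{\OSS_0}\xrightarrow{\sim}\Delta_{\OSS}/(1-\delta_{\OSS})\Delta_{\OSS}$ and $\ker B\xrightarrow{\sim}\ker(1-\delta_{\OSS})$, completing the proof.

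The hard part will be the first step: producing the six-term sequence with the map $1-(\lambda_{\OSS})_*$ correctly identified. Although Takai duality and the Pimsner-Voiculescu sequence are standard, pinning down the boundary map under the Morita equivalence $\Oalg\rtimes_\gaug\T\sim\Falg$ as exactly $1-(\lambda_{\OSS})_*$ requires a concrete model of $\Oalg$ — typically as the Cuntz-Pimsner algebra of a $\cs$-correspondence over $\Falg$ whose left action encodes $\lambda_{\OSS}$ — so that the dual action can be tracked through the equivalence. Once such a model is in place, the desired sequence is part of the general $K$-theory machinery for Cuntz-Pimsner algebras, and the remaining two steps are essentially formal.
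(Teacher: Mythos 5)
First, a caveat: the paper deliberately states Theorem \ref{heks} without proof (``I will not here prove the formulas\dots'') and defers to \cite{MR1646513,MR1691469,MR1852456,speciale,MR2360917}, so there is no in-text argument to compare against; your outline is, in spirit, the route those references take. That said, there is one genuine gap in your first step. The claim that $\Oalg\rtimes_\gaug\T$ is Morita equivalent to $\Falg$ via Rieffel's theorem requires the gauge action to be saturated, i.e.\ that the spectral subspaces be full over $\Falg$, and this fails precisely in the extra generality this theorem is asserting. Indeed, $\overline{\spa}\,\Oalg^{(1)*}\Oalg^{(1)}$ consists of sums of elements $s_\nu f\,\cha{\osh^{|\nu|+1}(\OSS)}\,f's_{\nu'}^*$, so its closure is a proper ideal of $\Falg$ whenever $\osh(\OSS)\subsetneq\OSS$; the fixed-point algebra then sits only as a non-full corner of the crossed product, and the six-term sequence you write down is not available in the form stated. (When $\osh(\OSS)=\OSS$, e.g.\ for $\OSS=\OSS[\TSS]$, your step is fine and is essentially Matsumoto's argument in \cite{MR1646513}.) This is exactly why the paper's setup carries the sets $M_k^l$ and the truncation maps $\delta_k^l$: in the general case one must either compute $K_*(\Oalg\rtimes_\gaug\T)$ directly as an AF inductive limit and track the dual action there, or work with Pimsner's exact sequence for the $\cs$-correspondence model of \cite{MR2380472}, whose coefficient algebra is $\Dalg$ rather than $\Falg$ and which therefore leads to the $\Z_{\OSS_0}$-picture by a different bookkeeping. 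You flag step one as the hard part, but you locate the difficulty in identifying the boundary map, whereas the prior difficulty is that the Morita equivalence itself is false in general.

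Two smaller points. Your final ``essentially formal'' step conflates two maps: $\delta_{\OSS}$ is induced by the truncations $\delta_k$, not by the connecting maps $A_k$, and on $\Delta_{\OSS}$ it acts (where everything is invertible) as the \emph{inverse} of the endomorphism induced by $A$; so $1-\delta_{\OSS}$ matches $B=I-A$ only after composing with that automorphism and invoking the standard lemma that $\ker$ and $\operatorname{coker}$ of $1-\alpha$ on $\limm(G,\alpha)$ agree with those of $1-\alpha$ on $G$. This is routine in the surjective case but requires the $M_k^l$ bookkeeping in general, and it is where the passage from $\Delta_{\OSS}$-data to $\Z_{\OSS_0}$-data actually happens. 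Finally, your assertion that $\Falg$ is AF (hence $K_1(\Falg)=0$) is correct but needs more than ``an increasing union of finite corners over $\Dalg$''; one shows each $\Falg_k=\overline{\spa}\{s_ufs_v^*\mid |u|=|v|\le k,\ f\in\Dalg\}$ is a finite direct sum of matrix algebras over commutative $\cs$-algebras with totally disconnected spectrum.
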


It follows from Theorem \ref{theorem:osc} and the fact that isomorphic
$C^*$-algebras have isomorphic $K$-theory (cf. Section
\ref{sec:app-ktheory}) that if $\OSS[1]$ and
$\OSS[2]$ are conjugated one-sided shift spaces, then
$(K_0(\OSS[1]),K_0^+(\OSS[1]))\cong (K_0(\OSS[2]),K_0^+(\OSS[2]))$,
$K_1(\OSS[1])\cong K_1(\OSS[2])$ and
$(\Delta_{\OSS[1]},\Delta_{\OSS[1]}^+,\delta_{\OSS[1]})\cong
(\Delta_{\OSS[2]},\Delta_{\OSS[2]}^+,\delta_{\OSS[2]})$, and it follows
from Theorem \ref{prop:morita} and Theorem \ref{theorem:flowmorita} and the fact
that Morita equivalent $C^*$-algebras have isomorphic $K$-theory (cf. Section
\ref{sec:app-ktheory}) that if
$\TSS[1]$ and $\TSS[2]$ are two-sided shift spaces which are
conjugate or just flow equivalent, then
$(K_0(\OSS[1]),K_0^+(\OSS[1]))\cong (K_0(\OSS[2]),K_0^+(\OSS[2]))$. 

We will now prove that if $\TSS[1]$ and $\TSS[2]$ are conjugate
two-sided shift spaces, then we also have that
$(\Delta_{\OSS[{\TSS[1]}]},\Delta_{\OSS[{\TSS[1]}]}^+,\delta_{\OSS[{\TSS[1]}]})\cong
(\Delta_{\OSS[{\TSS[2]}]},\Delta_{\OSS[{\TSS[2]}]}^+,\delta_{\OSS[{\TSS[2]}]})$. It
follows from Theorem \ref{theorem:nasu} that is it enough to prove
this for the case where there
exists a bipartite code between $\TSS[1]$ and $\TSS[2]$. So we will
assume that this is the case and use the same notation as in Section
\ref{sec:two-sided-conjugacy}. 

We let, as in Section \ref{sec:two-sided-conjugacy}, $\OSS$ be the
one-sided shift space $\tilde{f}_1(\OSS[{\TSS[1]}])\cup
\tilde{f}_2(\OSS[{\TSS[2]}])$. Since $\osh(\OSS)=\OSS$, we have that
$M_k^l=\{1,2,\dots,m(l)\}$ for $0\le k\le l$. It is not difficult to see that 
if $l\ge 1$, then each $l$-past equivalence class of $\OSS$ is either
a subset of $\tilde{f}_1(\OSS[{\TSS[1]}])$ or a subset of
$\tilde{f}_2(\OSS[{\TSS[2]}])$. For $i\in\{1,2\}$ we let
$\Z_i^{m(l)}=\Z^{J^l_i}$ where $J^l_i=\{j\in\{1,2,\dots,m(l)\}\mid \E_j^l\subseteq
\tilde{f}_i(\OSS[{\TSS[i]}])\}$. We then have that
$\Z^{m(l)}=\Z_1^{m(l)}\oplus \Z_2^{m(l)}$. It is not difficult to
check that we for all $0\le k\le l$ with $l\ge 1$ have that $I_k^l(
\Z_i^{m(l)})\subseteq \Z_i^{m(l+1)}$ and $A_k^l(
\Z_i^{m(l)})\subseteq \Z_{3-i}^{m(l+1)}$. It is also clear that there
for every $l\ge 1$ exists an isomorphism $\kappa_i^l$ from
$\Z^{m_{\OSS[{\TSS[i]}]}(l)}$ to $\Z_i^{m(2l)}$ such that the following
two diagrams commute:
\begin{displaymath}
\xymatrix{\Z^{m_{\OSS[{\TSS[i]}]}(l)}\ar[r]^{\kappa_i^l}
  \ar[d]_{I_k^l} & \Z_i^{m(2l)} \ar[d]^{I_k^{2l+1}\circ I_k^{2l}} \\
        \Z^{m_{\OSS[{\TSS[i]}]}(l+1)} \ar[r]^{\kappa_i^{l+1}} &
        \Z_i^{m(2l+2)} }
\end{displaymath}
\begin{displaymath}
\xymatrix{\Z^{m_{\OSS[{\TSS[i]}]}(l)}\ar[r]^{\kappa_i^l}
  \ar[d]_{A_k^l} & \Z_i^{m(2l)} \ar[d]^{A_{k+1}^{2l+1}\circ A_k^{2l}} \\
        \Z^{m_{\OSS[{\TSS[i]}]}(l+1)} \ar[r]^{\kappa_i^l} &
        \Z_i^{m(2l+2)} }
\end{displaymath}
where, for each $l$, $m_{\OSS[{\TSS[i]}]}(l)$ denotes the number of
$l$-past equivalence classes in $\OSS[{\TSS[i]}]$ and the maps in the
left column are the ones used to compute the $K$-groups associated to
$\OSS[{\TSS[i]}]$, and the maps in the
right column are the ones used to compute the $K$-groups associated
to $\OSS$. Since $\osh(\OSS[{\TSS[1]}])=\OSS[{\TSS[1]}]$,
$\osh(\OSS[{\TSS[2]}])=\OSS[{\TSS[2]}]$, and $\osh(\OSS)=\OSS$, we
have for every $k\ge 0$ that
$\Z_{(\OSS[{\TSS[1]}])_k}=\Z_{(\OSS[{\TSS[1]}])_0}$,
$\Z_{(\OSS[{\TSS[1]}])_k}=\Z_{(\OSS[{\TSS[1]}])_0}$ and $\Z_{\OSS_k}=\Z_{\OSS_0}$.
It follows from the two commuting diagrams above that there exist
injective homomorphisms 
$\kappa_1:\Z_{(\OSS[{\TSS[1]}])_0}\to\Z_{\OSS_0}$ and
$\kappa_2:\Z_{(\OSS[{\TSS[2]}])_0}\to\Z_{\OSS_0}$ such that
$\Z_{\OSS_0}=\kappa_1(\Z_{(\OSS[{\TSS[1]}])_0})\oplus
\kappa_2(\Z_{(\OSS[{\TSS[2]}])_0})$, $A_0(\kappa_1(\Z_{(\OSS[{\TSS[1]}])_0}))=
\kappa_2(\Z_{(\OSS[{\TSS[2]}])_0})$, and
$A_0(\kappa_2(\Z_{(\OSS[{\TSS[2]}])_0}))= 
\kappa_1(\Z_{(\OSS[{\TSS[1]}])_0})$ and such that the following diagram
commutes:
\begin{displaymath}
\xymatrix{\Z_{(\OSS[{\TSS[1]}])_0} \ar[rr]^{A_0} \ar[d]_{\kappa_1} &&
  \Z_{(\OSS[{\TSS[1]}])_0} \ar[d]^{\kappa_1}&\\
  \kappa_1(\Z_{(\OSS[{\TSS[1]}])_0}) \ar[r]^{A_0} &
  \kappa_2(\Z_{(\OSS[{\TSS[2]}])_0}) \ar[r]^{A_0} &
  \kappa_1(\Z_{(\OSS[{\TSS[1]}])_0}) \ar[r]^{A_0} &
  \kappa_2(\Z_{(\OSS[{\TSS[2]}])_0}) \\
  & \Z_{(\OSS[{\TSS[2]}])_0} \ar[rr]^{A_0} \ar[u]^{\kappa_2}&&
  \Z_{(\OSS[{\TSS[2]}])_0} \ar[u]_{\kappa_2}
}
\end{displaymath}
It follows that $(\Delta_{\OSS[{\TSS[1]}]},\Delta_{\OSS[{\TSS[1]}]}^+,\delta_{\OSS[{\TSS[1]}]})\cong
(\Delta_{\OSS[{\TSS[2]}]},\Delta_{\OSS[{\TSS[2]}]}^+,\delta_{\OSS[{\TSS[2]}]})$. Thus
we have:
\begin{theorem} \label{theorem:fd}
  Let $\TSS[1]$ and $\TSS[2]$ be 
  two-sided shift spaces. If $\TSS[1]$ and $\TSS[2]$ are conjugate,
  then we have that 
$(\Delta_{\OSS[{\TSS[1]}]},\Delta_{\OSS[{\TSS[1]}]}^+,\delta_{\OSS[{\TSS[1]}]})\cong
(\Delta_{\OSS[{\TSS[2]}]},\Delta_{\OSS[{\TSS[2]}]}^+,\delta_{\OSS[{\TSS[2]}]})$.
\end{theorem}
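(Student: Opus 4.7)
The plan is to follow the strategy already sketched in the paragraphs preceding the statement. By Theorem \ref{theorem:nasu}, every conjugacy between two-sided shift spaces factors as a composition of bipartite codes, so it suffices to prove the isomorphism under the assumption that there is a bipartite code $\phi:\TSS[1]\to\TSS[2]$. I fix the notation of Section \ref{sec:two-sided-conjugacy}: the one-sided shift space $\OSS=\tilde f_1(\OSS[{\TSS[1]}])\sqcup\tilde f_2(\OSS[{\TSS[2]}])$ over the alphabet $\al=\al'_1\sqcup\al'_2$, together with the bipartite expressions $f_1,f_2$.

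The first key observation is the following combinatorial fact: for $l\ge 1$, every $l$-past equivalence class of $\OSS$ is entirely contained in one of the two clopen pieces $\tilde f_i(\OSS[{\TSS[i]}])$, because $\Past_1$ already distinguishes points whose first letter lies in $\al'_1$ from points whose first letter lies in $\al'_2$. This lets me split $\Z^{m(l)}=\Z_1^{m(l)}\oplus \Z_2^{m(l)}$ according to which side an $l$-past class sits in. From the ``alternating'' nature of the bipartite expressions one checks that $I_k^l$ preserves this direct sum decomposition while $A_k^l$ swaps the two summands, i.e.\ $A_k^l(\Z_i^{m(l)})\subseteq \Z_{3-i}^{m(l+1)}$.

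Next I would construct, for each $i\in\{1,2\}$ and each $l\ge 1$, an isomorphism $\kappa_i^l:\Z^{m_{\OSS[{\TSS[i]}]}(l)}\to\Z_i^{m(2l)}$ sending the basis vector indexed by an $l$-past class $\E$ of $\OSS[{\TSS[i]}]$ to the basis vector indexed by the corresponding $2l$-past class $\tilde f_i(\E)$ of $\OSS$. The main verification is that under $\kappa_i^l$ the map $I_k^l$ for $\OSS[{\TSS[i]}]$ corresponds to the double step $I_k^{2l+1}\circ I_k^{2l}$ in $\OSS$, and similarly $A_k^l$ corresponds to $A_{k+1}^{2l+1}\circ A_k^{2l}$; this is the expected ``halving of length'' coming from the fact that one letter in $\al_i$ encodes two letters in $\al$. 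Since $\osh(\OSS[{\TSS[i]}])=\OSS[{\TSS[i]}]$ and $\osh(\OSS)=\OSS$, we have $M_k^l=\{1,\dots,m(l)\}$ for all $0\le k\le l$, so $\Z_{(\OSS[{\TSS[i]}])_k}=\Z_{(\OSS[{\TSS[i]}])_0}$ and $\Z_{\OSS_k}=\Z_{\OSS_0}$, and passing to the inductive limits the maps $\kappa_i^l$ yield injective positive homomorphisms $\kappa_i:\Z_{(\OSS[{\TSS[i]}])_0}\to\Z_{\OSS_0}$ with $\Z_{\OSS_0}=\kappa_1(\Z_{(\OSS[{\TSS[1]}])_0})\oplus\kappa_2(\Z_{(\OSS[{\TSS[2]}])_0})$.

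Finally, from the alternating behaviour of $A_0$ on the direct sum and the commuting square relating $\kappa_i$ to $A_0$ (the two-step $A_0\circ A_0$ on $\Z_{\OSS_0}$ corresponds under $\kappa_i$ to the single step $A_0$ on $\Z_{(\OSS[{\TSS[i]}])_0}$), the inductive limit $(\Delta_{\OSS},\Delta_{\OSS}^+)$ of $(\Z_{\OSS_0},\Z_{\OSS_0}^+,A_0)$ is canonically isomorphic, via $\kappa_i$, to both $(\Delta_{\OSS[{\TSS[1]}]},\Delta_{\OSS[{\TSS[1]}]}^+)$ and $(\Delta_{\OSS[{\TSS[2]}]},\Delta_{\OSS[{\TSS[2]}]}^+)$; moreover the identification intertwines $\delta_{\OSS[{\TSS[i]}]}$ with $\delta_{\OSS}$ since the $\delta$-maps are built out of the identity inclusions $M_k^l\hookrightarrow M_{k+1}^l$ and these are compatible with $\kappa_i$. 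Composing these two isomorphisms gives the desired isomorphism of ordered groups with endomorphism. I expect the main obstacle to be verifying carefully that $\kappa_i$ intertwines $\delta_{\OSS[{\TSS[i]}]}$ and $\delta_\OSS$ (restricted to the image), which requires tracking how the ``length-halving'' interacts with the shifted index $k\mapsto k+1$ used to define the $\delta$-maps; once that bookkeeping is done, the rest is formal.
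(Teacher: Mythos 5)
Your reduction to bipartite codes via Theorem \ref{theorem:nasu}, the splitting $\Z^{m(l)}=\Z_1^{m(l)}\oplus\Z_2^{m(l)}$ for $l\ge 1$, the observation that $I_k^l$ preserves the two summands while $A_k^l$ swaps them, and the length-halving isomorphisms $\kappa_i^l$ intertwining $I_k^l$ with $I_k^{2l+1}\circ I_k^{2l}$ and $A_k^l$ with $A_{k+1}^{2l+1}\circ A_k^{2l}$ are exactly the ingredients of the paper's argument. The problem is your final step: you claim that $\kappa_i$ induces an isomorphism of $(\Delta_{\OSS[{\TSS[i]}]},\Delta_{\OSS[{\TSS[i]}]}^+)$ onto $(\Delta_{\OSS},\Delta_{\OSS}^+)$ for each $i$ and then compose the two. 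That intermediate claim is false. Passing to the cofinal subsystem of even stages, the connecting map of $\limm(\Z_{\OSS_0},A_0)$ becomes $A_0\circ A_0$, which is diagonal for the decomposition $\Z_{\OSS_0}=\kappa_1(\Z_{(\OSS[{\TSS[1]}])_0})\oplus\kappa_2(\Z_{(\OSS[{\TSS[2]}])_0})$; hence $\Delta_{\OSS}\cong\Delta_{\OSS[{\TSS[1]}]}\oplus\Delta_{\OSS[{\TSS[2]}]}$, and $\kappa_i$ only embeds $\Delta_{\OSS[{\TSS[i]}]}$ as a direct summand, in general a proper one. (For the full $2$-shift one has $\Delta\cong\Z[1/2]$, and $\Z[1/2]\oplus\Z[1/2]\not\cong\Z[1/2]$.) So the route through $\Delta_\OSS$ cannot produce the desired isomorphism.

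The repair, which is what the paper actually does, is to bypass $\Delta_\OSS$ entirely. Since $A_0$ carries $\kappa_1(\Z_{(\OSS[{\TSS[1]}])_0})$ into $\kappa_2(\Z_{(\OSS[{\TSS[2]}])_0})$ and vice versa, and since composing two consecutive such steps gives the connecting map of each subsystem, the single interleaved chain
$\kappa_1(\Z_{(\OSS[{\TSS[1]}])_0})\xrightarrow{A_0}\kappa_2(\Z_{(\OSS[{\TSS[2]}])_0})\xrightarrow{A_0}\kappa_1(\Z_{(\OSS[{\TSS[1]}])_0})\xrightarrow{A_0}\cdots$
has both $\Delta_{\OSS[{\TSS[1]}]}$ and $\Delta_{\OSS[{\TSS[2]}]}$ as its direct limit, computed along the two cofinal subsequences of even and odd stages; equivalently, $A_0$ is a morphism of direct systems from $(\kappa_1(\Z_{(\OSS[{\TSS[1]}])_0}),A_0\circ A_0)$ to $(\kappa_2(\Z_{(\OSS[{\TSS[2]}])_0}),A_0\circ A_0)$ whose induced map on limits is invertible, with inverse induced by the other $A_0$. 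This yields $\Delta_{\OSS[{\TSS[1]}]}\cong\Delta_{\OSS[{\TSS[2]}]}$ as ordered groups directly, and because $\osh(\OSS[{\TSS[i]}])=\OSS[{\TSS[i]}]$ and $\osh(\OSS)=\OSS$ force all the sets $M_k^l$ to be full, the maps $\delta_k^l$ are identities and the intertwining of $\delta_{\OSS[{\TSS[1]}]}$ with $\delta_{\OSS[{\TSS[2]}]}$ follows with essentially no extra work. With this replacement of your last paragraph the argument is the paper's proof.
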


It actually follows from Theorem \ref{theorem:fd} that if $\TSS[1]$
and $\TSS[2]$ are conjugate, then $\Falg[{\OSS[{\TSS[1]}]}]$ and
$\Falg[{\OSS[{\TSS[1]}]}]$ are Morita equivalent.

\appendix
\chapter{Appendix}
I will in this section give a (very short) introduction to $C^*$-algebras, Morita equivalence of $C^*$-algebras and $K$-theory for $C^*$-algebras which hopefully will be enough for the reader to understand these notes.

I will not not give any proofs at all. The interested reader is referred to
for example \cite{MR0512360,MR2188261,MR1656031,MR1074574,MR1634408,MR1783408,MR1222415} for more details.

\section{$\cs$-algebras}

\begin{definition}
A \emph{$\cs$-algebra} is an algebra $\Xalg$ over the complex numbers
equipt with a map 
$x\mapsto x^*$ and a norm $\norm{\cdot}$ 
satisfying:
\begin{enumerate}
\item $\Xalg$ is complete with respect to $\norm{\cdot}$,
\item $\norm{xy}\le\norm{x}\norm{y}$ for $x,y\in\Xalg$,
\item $(x^*)^*=x$ for $x\in\Xalg$,
\item $(xy)^*=y^*x^*$ for $x,y\in\Xalg$,
\item $(\lambda x)^*=\overline{\lambda}x^*$ for $\lambda\in\C$ and $x\in\Xalg$,
\item $(x+y)^*=x^*+y^*$ for $x,y\in\Xalg$,
\item $\norm{x^*}=\norm{x}$ for $x\in\Xalg$,
\item $\norm{x^*x}=\norm{x}^2$ for $x\in\Xalg$.
\end{enumerate}
\end{definition}
The map $x\mapsto x^*$ is called an \emph{involution}. A $C^*$-algebra is called \emph{unital}
if it has a algebraic unit (i.e, $\Xalg$ is unital if there exists a $1\in\Xalg$ such that $1x=x1=x$
for all $x\in\Xalg$). All the $C^*$-algebras we will meet in these notes (except here in the appendix)
are unital.

An algebra equipt with a norm satisfying condition (1) and (2) is
called a \emph{Banach algebra}. 
A Banach algebra equipt with an involution satisfying condition
(3)-(7) is called a \emph{Banach $*$-algebra} (or just a
\emph{$B^*$-algebra}). Condition (8) is often 
called \emph{the $C^*$-identity}. Although this condition at first
glance seems to be a mild condition 
it is in fact very strong because it
ties together the algebraic structure of the $C^*$-algebra and its topology. 
One can for example show that if $\Xalg$ is an algebra
equipt with an involution satisfying condition (3)-(6), then there
is at most one norm which makes $\Xalg$ a $C^*$-algebra.

A map $\phi:\Xalg_1\to\Xalg_2$ between $\cs$-algebras is called a
\emph{$*$-homomorphism} if it satisfies
\begin{enumerate}
\item $\phi(ax+by)=a\phi(x)+b\phi(y)$ for $x,y\in\Xalg_1$ and
  $a,b\in\C$,
\item $\phi(xy)=\phi(x)\phi(y)$ for $x,y\in\Xalg$,
\item $\phi(x^*)=(\phi(x))^*$ for $x\in\Xalg$.
\end{enumerate}

A $*$-homomorphism which is invertible is called a
\emph{$*$-isomorphism}, and if there exists a $*$-isomorphism
between two $C^*$-algebras, then their are said to be \emph{isomorphic}.

If $\phi:\Xalg_1\to\Xalg_2$ is a $*$-homomorphism, then
$\norm{\phi(x)}\le \norm{x}$ for all $x\in\Xalg_1$, and $\phi$ is
injective if and only if $\norm{\phi(x)}=\norm{x}$ for all
$x\in\Xalg_1$ (see for example \cite[Theorem 2.1.7]{MR1074574} for a
proof of this).  
Thus a $*$-homomorphism is automatically
continuous, and a $*$-isomorphism is automatically isometric. 
This is another example of how the algebraic structure of
a $C^*$-algebra and its topology are closely related.

\begin{example}
  Let $\hilbert$ be a Hilbert space. Then the algebra $\B(\hilbert)$
  of bounded operators is a $C^*$-algebra where $T^*$
  of an bounded operator $T\in\B(\hilbert)$ is the adjoint of $T$, and
  the norm $\norm{T}$ is the operator norm $\sup\{\norm{T\eta}\mid
  \eta\in\hilbert,\ \norm{\eta}\le 1\}$.
\end{example}

\begin{definition}
A \emph{projection} in a $C^*$-algebra $\Xalg$ is a $p\in\Xalg$
satisfying $p^2=p^*=p$. 
A \emph{partial isometry} is a $s\in\Xalg$ satisfying $ss^*s=s$.
\end{definition}

It is easy to see that if $s$ is a partial isometry, then $ss^*$ (and
$s^*s$) is a projection. 
One can prove (see for example \cite[Theorem 2.3.3]{MR1074574})
that if $s$ is an element of a $C^*$-algebra such that $ss^*$ is a
projection, then 
$s$ is a partial isometry.

\begin{definition}
  A \emph{$C^*$-subalgebra} of a $C^*$-algebra $\Xalg$ is a closed subalgebra
  $\Yalg$ of $\Xalg$ such that $x\in\Yalg\implies x^*\in\Yalg$.
\end{definition}

A $C^*$-subalgebra $\Yalg$ is a $C^*$-algebra in itself with the
operations it inherits from $\Xalg$.
It is a famous theorem by Gelfand and Naimark that every
$C^*$-algebra is isomorphic to some $C^*$-subalgebra of the
$C^*$-algebra of bounded operators on some Hilbert space.

\begin{example}
Let $X$ be a set. The algebra of bounded functions from $X$ to $\C$ is
a $C^*$-algebra where the involution $f^*$ of a $f\in\BL(X)$ is defined
by $f^*(x)=\overline{f(x)}$ for all $x\in X$, and the norm $\norm{f}$
of $f$ is $\sup\{\abs{f(x)}\mid x\in X\}$. Notice that $\BL(X)$ is abelian.

If $X$ is a locally compact Hausdorff space, then the algebra $C_0(X)$
of continuous functions on $X$ vanishing at infinity is a
$C^*$-subalgebra of $\BL(X)$. 
\end{example}

It is a famous theorem by Gelfand that every abelian
$C$-algebra is isomorphic to $C_0(X)$ for some locally compact
Hausdorff space $X$. 

We are going to need (in the proof of Proposition \ref{prop:dalg}) the
following fact which follows from \cite[Theorem 2.1.11]{MR1074574}: 
 
\begin{fact} \label{fact:inv}
Let $\Xalg$ be a unital $C^*$-algebra. If $\Yalg$ is a
$C^*$-subalgebra of $\Xalg$ which contains the 
unit of $\Xalg$, and $y\in\Yalg$ is invertible in $\Xalg$, then its
inverse $y\inv$ belongs to $\Yalg$. 
\end{fact}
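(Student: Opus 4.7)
The plan is to deduce Fact \ref{fact:inv} from spectral permanence, the statement that a self-adjoint element of a unital $C^*$-subalgebra has the same spectrum in the subalgebra as in the ambient algebra. First I would reduce to the self-adjoint case: since $\Yalg$ is a $*$-subalgebra containing the unit of $\Xalg$, the element $y^*y$ lies in $\Yalg$ and is self-adjoint; because $y$ is invertible in $\Xalg$, so is $y^*y$, with inverse $y\inv (y^*)\inv\in\Xalg$. If I can show that $y^*y$ is already invertible in $\Yalg$, then $(y^*y)\inv y^*\in\Yalg$, and the computation
\begin{equation*}
  (y^*y)\inv y^* = y\inv (y^*)\inv y^* = y\inv
\end{equation*}
shows that $y\inv$ belongs to $\Yalg$. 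Thus it suffices to prove the fact for self-adjoint $y$.

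So let $a\in\Yalg$ be self-adjoint and invertible in $\Xalg$; I want $a\inv\in\Yalg$, i.e., $0\notin\sigma_\Yalg(a)$. Equivalently, I want $\sigma_\Yalg(a)=\sigma_\Xalg(a)$. One inclusion $\sigma_\Xalg(a)\subseteq\sigma_\Yalg(a)$ is automatic. For the reverse, I would invoke the standard Banach-algebra fact that the topological boundary satisfies $\partial\sigma_\Yalg(a)\subseteq\sigma_\Xalg(a)$: if $\lambda\in\partial\sigma_\Yalg(a)$, choose $\lambda_n\to\lambda$ with $\lambda_n\notin\sigma_\Yalg(a)$; then $\norm{(a-\lambda_n)\inv}\to\infty$, and by choosing $y_n\in\Yalg$ with $\norm{y_n}=1$ and $\norm{(a-\lambda_n)\inv y_n}\geq\frac12\norm{(a-\lambda_n)\inv}$, the normalised vectors $z_n:=(a-\lambda_n)\inv y_n/\norm{(a-\lambda_n)\inv y_n}$ satisfy $\norm{z_n}=1$ and $(a-\lambda)z_n\to 0$, which rules out invertibility of $a-\lambda$ in $\Xalg$.

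The self-adjointness of $a$ enters exactly here: in any unital $C^*$-algebra that contains $a$, the spectrum of the self-adjoint element $a$ is a compact subset of $\R$, hence, viewed as a subset of $\C$, coincides with its own boundary. Therefore
\begin{equation*}
  \sigma_\Yalg(a)=\partial\sigma_\Yalg(a)\subseteq\sigma_\Xalg(a)\subseteq\sigma_\Yalg(a),
\end{equation*}
so the two spectra agree and $0\notin\sigma_\Yalg(a)$, as desired.

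The main obstacle is the boundary-in-spectrum step used for the inclusion $\partial\sigma_\Yalg(a)\subseteq\sigma_\Xalg(a)$; this is genuinely a general Banach-algebra statement rather than something special to $C^*$-algebras, and I would simply quote it from a textbook (for example \cite[Theorem 2.1.11]{MR1074574}, which is precisely the reference the paper cites). Everything else is formal manipulation of $y^*y$ and the observation that real spectra have empty interior in $\C$.
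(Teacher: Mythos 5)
Your proposal is correct: the reduction to the self-adjoint case via $y^*y$, the boundary inclusion $\partial\sigma_{\Yalg}(a)\subseteq\sigma_{\Xalg}(a)$, and the observation that a real spectrum equals its own boundary together give spectral permanence, from which the fact follows. The paper itself offers no proof but simply cites \cite[Theorem 2.1.11]{MR1074574}, and your argument is exactly the standard proof of that cited theorem, so there is nothing to compare beyond noting that you have supplied the details the paper delegates to the reference.
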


When $\Xalg$ is $C^*$-algebra and $X$ is some subset of $\Xalg$, then
there exists a $C^*$-subalgebra $\Yalg$ of 
$\Xalg$ which contains $X$ and which is contained in any other
$C^*$-subalgebra of $\Xalg$ that contains $X$. The $C^*$-subalgebra 
$\Yalg$ is just the intersection of every $C^*$-subalgebra of $\Xalg$
that contains $X$. 
We call $\Yalg$ \emph{the $C^*$-subalgebra of $\Xalg$ generated by $X$}
and denote it by $C^*(X)$.

\section{Morita equivalence} \label{sec:morita}
By an \emph{ideal} of a $C^*$-algebra we mean a closed two-sided
ideal. I.e., an ideal of a $C^*$-algebra $\Xalg$ is 
a closed subset $I$ of $\Xalg$ such that $\lambda a+\gamma b,xa,ax\in
I$ for $a,b\in I$, $\lambda,\gamma\in\C$ and $x\in\Xalg$. An ideal $I$
of a $C^*$-algebra is automatically closed under involution, i.e., if
$x\in I$, then $x^*\in I$. Thus every ideal of a $C^*$-algebra is also
a $C^*$-subalgebra.

A nonzero ideal of a $C^*$-algebra $\Xalg$ is said to be
\emph{essential} if it has nonzero intersection with every other
nonzero ideal of $\Xalg$. 

There exists for every $C^*$-algebra $\Xalg$ a up to isomorphism
unique maximal unital $C^*$-algebra $M(\Xalg)$ which contains $\Xalg$
as an essential ideal. The $C^*$-algebra $M(\Xalg)$ is known as
\emph{the multiplier algebra of $\Xalg$}, cf. \cite[Theorem
3.1.8]{MR1074574} and \cite[Theorem 2.47]{MR1634408}. If $\Xalg$
itself is unital, then $M(\Xalg)=\Xalg$. 

It is easy to check that if $p$ is a projection in the multiplier
algebra $M(\Xalg)$ of a the $C^*$-algebra $\Xalg$, then $p\Xalg
p:=\{pxp\mid x\in\Xalg\}$ is a $C^*$-subalgebra of $\Xalg$. Such a
$C^*$-subalgebra is called a \emph{corner}.
The projection $p$ is said to be \emph{full} and the corner $p\Xalg p$ 
is said to be a \emph{full corner} if there is no proper ideal of
$\Xalg$ which contains $p$. 

Two projections $p,q\in M(\Xalg)$ are said to be \emph{complementary}
if $p+q=1$. If $p$ and $q$ are complementary, then $pq=0$ and thus
$p\Xalg p\cap q\Xalg q=\{0\}$. In this situation, the two corners
$p\Xalg p$ and $q\Xalg$ are also called complementary. 

\emph{Morita equivalence} is an equivalence relations between
$C^*$-algebras. I will not give the definition of Morita equivalence
here, but instead use the following characterization of Morita equivalence.

\begin{theorem}[{Cf. \cite[Theorem 3.19]{MR1634408}}]
Two $C^*$-algebras $\Xalg_1$ and $\Xalg_2$ are Morita equivalent if
and only if there is a $C^*$-algebra $\Xalg$ with complementary full
corners isomorphic to $\Xalg_1$ and $\Xalg_2$, respectively.
\end{theorem}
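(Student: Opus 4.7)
The plan is to prove both implications via the linking algebra construction and its inverse. For the $(\Leftarrow)$ direction, I would start with a $\cs$-algebra $\Xalg$ and complementary full projections $p,q \in M(\Xalg)$ with $p+q=1$ such that $p\Xalg p\cong \Xalg_1$ and $q\Xalg q\cong \Xalg_2$. I would set $X := p\Xalg q$ and show that $X$ is an $(\Xalg_1,\Xalg_2)$-imprimitivity bimodule under the obvious bimodule action by multiplication, with left inner product ${}_{\Xalg_1}\langle x,y\rangle := xy^*\in p\Xalg p$ and right inner product $\langle x,y\rangle_{\Xalg_2} := x^*y \in q\Xalg q$. The Hilbert $\cs$-module axioms and associativity of the two inner products follow routinely from associativity in $\Xalg$ and the $\cs$-identity. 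Fullness of the inner products reduces to fullness of the projections: for example, $\overline{\mathrm{span}}\{xy^* : x,y\in X\} = p\Xalg q\Xalg p$, which equals $p\Xalg p$ because the ideal generated by $q$ is all of $\Xalg$.

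For the $(\Rightarrow)$ direction, assume an $(\Xalg_1,\Xalg_2)$-imprimitivity bimodule $X$ is given. I would build the linking algebra
\begin{equation*}
L(X) = \begin{pmatrix} \Xalg_1 & X \\ \widetilde X & \Xalg_2 \end{pmatrix},
\end{equation*}
where $\widetilde X$ is the dual bimodule, with multiplication using the two inner products in the natural way. To endow $L(X)$ with a $\cs$-norm I would realise it faithfully as adjointable operators on the Hilbert $\Xalg_2$-module $X\oplus \Xalg_2$, with matrix entries acting in the obvious way (top-left via the left action of $\Xalg_1$, which identifies with the algebra of generalised compact operators on $X$ thanks to fullness of ${}_{\Xalg_1}\langle\cdot,\cdot\rangle$). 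Once $L(X)$ is a $\cs$-algebra, the two matrix units $p = \bigl(\begin{smallmatrix}1&0\\0&0\end{smallmatrix}\bigr)$ and $q = \bigl(\begin{smallmatrix}0&0\\0&1\end{smallmatrix}\bigr)$ sit in $M(L(X))$, are complementary, and yield $pL(X)p\cong \Xalg_1$ and $qL(X)q\cong \Xalg_2$. Fullness of $p$ and $q$ follows directly from fullness of the two inner products on $X$: the ideal generated by $p$ contains $X$ (since $X = pL(X)q \subseteq pL(X) \cdot L(X)$) and hence $\widetilde X$ and $\Xalg_2$, so it is all of $L(X)$; symmetric argument for $q$.

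The main obstacle is the construction of the $\cs$-norm on $L(X)$, which requires showing that the matrix multiplication defined using ${}_{\Xalg_1}\langle\cdot,\cdot\rangle$ and $\langle\cdot,\cdot\rangle_{\Xalg_2}$ is associative (consistency of the two inner products via $\langle x,y\rangle_{\Xalg_2}\cdot z = x\cdot {}_{\Xalg_1}\langle y,z\rangle^*$ style identities) and that the resulting $*$-algebra admits a faithful representation on a Hilbert module. I would invoke the standard fact that adjointable operators on any Hilbert $\cs$-module form a $\cs$-algebra, and then check that the matrix realisation lies in and densely generates the relevant corner. The remaining verifications, once $L(X)$ is known to be a $\cs$-algebra, are straightforward corner and fullness computations.
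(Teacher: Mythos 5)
This theorem appears in the paper's appendix, where the author explicitly states that no proofs are given and refers the reader to the literature (it is quoted from Raeburn--Williams, Theorem 3.19); indeed, the paper deliberately avoids defining Morita equivalence and uses this statement as its working characterization. So there is no proof in the paper to compare against. Your sketch is the standard linking-algebra argument, which is exactly how the cited source establishes the result, and it is correct in outline: the $(\Leftarrow)$ direction via $X=p\Xalg q$ with inner products $xy^*$ and $x^*y$, fullness of the inner products reducing to fullness of the projections via $\overline{p\Xalg q\Xalg p}=p\Xalg p$; and the $(\Rightarrow)$ direction via the linking algebra $L(X)$ represented faithfully as adjointable operators on $X\oplus\Xalg_2$, with $p$ and $q$ the diagonal projections in $M(L(X))$. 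The points you flag as needing care are the right ones: the compatibility identity ${}_{\Xalg_1}\langle x,y\rangle z=x\langle y,z\rangle_{\Xalg_2}$ (which is just associativity in $\Xalg$ in the first direction), the faithfulness of the left action of $\Xalg_1$ on $X$ (which needs fullness of the left inner product, since $aX=0$ forces $a\,{}_{\Xalg_1}\langle x,y\rangle=0$ for all $x,y$ and hence $a=0$), and the density argument showing the ideal generated by $p$ in $L(X)$ contains $X$, then $\widetilde X X$, hence all of $\Xalg_2$. Note only that your argument presupposes the imprimitivity-bimodule definition of Morita equivalence; since the paper never fixes a definition, within these notes the statement functions as a definition rather than a theorem to be proved.
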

It follows directly that Morita equivalence is weaker than isomorphism.
It is not difficult to show that if $p\Xalg p$ is a full corner of a
$C^*$-algebra, then $p\Xalg p$ and $\Xalg$ are Morita equivalent. 

\section{$K$-theory for $C^*$-algebras} \label{sec:app-ktheory}
$K$-theory for $C^*$-algebras is a pair of covariant functors $K_0$ and $K_1$
both defined on the category of $C^*$-algebras.
The functor $K_0$ associate to each $C^*$-algebra $\Xalg$ a pair
$(K_0^+(\Xalg),K_0(\Xalg))$ consisting of an abelian group
$K_0(\Xalg)$ and a sub-semigroup $K_0^+(\Xalg)$ of $K_0(\Xalg)$ (i.e.,
$K_0^+(\Xalg)\subseteq K_0(\Xalg)$ 
and $g,h\in K_0^+(\Xalg)\implies g+h\in K_0^+(\Xalg)$), and associate
to each a $*$-homomorphism $\phi:\Xalg_1\to\Xalg_2$ a group
homomorphism $K_0(\phi):K_0(\Xalg_1)\to K_0(\Xalg_2)$ satisfying 
$K_0(\phi)(K_0^+(\Xalg_1))\subseteq K_0^+(\Xalg_2)$. 
The functor $K_1$ associate to each $C$-algebra $\Xalg$ an abelian
group $K_1(\Xalg)$ and to each a $*$-homomorphism
$\phi:\Xalg_1\to\Xalg_2$ a group homomorphism
$K_1(\phi):K_1(\Xalg_1)\to K_1(\Xalg_2)$. 

That $K_0$ and $K_1$ are functors means that
$K_0(\id\Xalg)=\id_{K_0(\Xalg)}$ and $K_1(\id\Xalg)=\id_{K_1(\Xalg)}$
for every $C^*$-algebra $\Xalg$, and that
$K_0(\phi_1\circ\phi_2)=K_0(\phi_1)\circ K_0(\phi_2)$ and
$K_1(\phi_1\circ\phi_2)=K_1(\phi_1)\circ K_1(\phi_2)$ for all
$*$-homomorphisms $\phi_1:\Xalg_1\to\Xalg_2$ and
$\phi_2:\Xalg_2\to\Xalg_3$. Thus if two $C^*$-algebras are isomorphic,
then $K_0(\Xalg_1)$ and $K_0(\Xalg_2)$ are isomorphic as groups, and
so are $K_1(\Xalg_1)$ and $K_1(\Xalg_2)$. In fact, $K_0(\Xalg_1)$ and
$K_0(\Xalg_2)$ are isomorphic by an isomorphism which maps
$K_0^+(\Xalg_1)$ onto $K_0^+(\Xalg_2)$. 

If $p\Xalg p$ is a full corner of a $C^*$-algebra $\Xalg$ and $\iota$
denotes the inclusion of $p\Xalg p$ into $\Xalg$, then $K_0(\iota)$
and $K_1(\iota)$ are both isomorphisms, and the isomorphism
$K_0(\iota)$ maps $K_0^+(p\Xalg p)$ onto $K_0^+(\Xalg)$, see
\cite[Proposition B.3]{MR2102572}. Thus if two $C^*$-algebras are
Morita equivalent, then $K_1(\Xalg_1)$ and $K_1(\Xalg_2)$ are
isomorphic as groups, and $K_0(\Xalg_1)$ and $K_0(\Xalg_2)$ are
isomorphic as groups by an isomorphism which maps $K_0^+(\Xalg_1)$
onto $K_0^+(\Xalg_2)$.

\end{document}